\newcommand{\bla}{\langle \hspace{-2.7pt} \langle}
\newcommand{\bra}{\rangle\hspace{-2.7pt} \rangle}
\author{
\textbf{Jonas Schnitzer}\thanks{\texttt{jschnitzer@unisa.it}}\\[0.5cm]
 Dipartimento di Matematica \\
  Università degli Studi di Salerno \\
  Via Giovanni Paolo II, 132\\
  84084  Fisciano (SA) \\
  Italy}
\title{Regular Jacobi Structures and Generalized Contact Bundles}
\begin{document}

\maketitle

\begin{abstract}
A Jacobi structure $J$ on a line bundle $L\to M$ is weakly regular if the sharp map $J^\sharp : J^1 L \to DL$ has constant 
rank. A 
generalized contact bundle with regular Jacobi structure possess a transverse complex structure. Paralleling the work of 
Bailey in generalized complex geometry, we find condition on a pair consisting of a regular Jacobi structure and an 
transverse complex structure to come from a generalized contact structure. In this way we are able to construct interesting 
examples of generalized contact bundles. As applications: 1) we prove that every 5-dimensional nilmanifold is equipped with 
an invariant generalized contact structure, 2) we show that, unlike the generalized complex case, all contact bundles over a 
complex manifold possess a compatible generalized contact structure. Finally we provide a counterexample presenting a locally 
conformal symplectic bundle over a generalized contact manifold of complex type that do not possess a compatible generalized 
contact structure.
\end{abstract}

\addtocontents{toc}{\protect\setcounter{tocdepth}{1}}
\tableofcontents

\section{Introduction}
Generalized geometry was set up in the early 2000's by Hitchin \cite{H2003} and further developed 
by Gualtieri \cite{zbMATH05960697}, 
and the literature about them is  now rather wide.
Generalized complex manifolds encompass symplectic 
and complex manifolds, i.e. they provide a common framework for symplectic and complex manifolds. 
Generalized complex structure are even dimensional, so from the very beginning people searched for their odd 
dimensional counterpart. The odd dimensional counterparts of symplectic manifolds are contact manifolds and 
actually this gave a hint: the odd dimensional analogue of generalized complex structures should at least contain 
contact manifolds. First attempts to give meaningful definitions have been done by Iglesias-Ponte and Wade 
\cite{IGLESIASPONTE2005249}, Poon and Wade \cite{JLMS:JLMS0333} and  Aldi and Grandini \cite{ALDI201578}. 
Recently, Vitagliano and Wade proposed a definition \cite{GenConBun} that seems to be well 
motivated and has the main advantage that all the previous attempts are included. 
This is the definition we will work with. 
Generalized contact structures (in the sense of Vitagliano and Wade) 
encompass non necessarily co-orientable contact structures and integrable 
complex structures on the \emph{Atiyah-algebroid} of a line bundle. 
In \cite{2017arXiv171108310S} it was proven that, locally, generalized contact structures are products of either a contact and a homogeneous        
generalized complex or a symplectic and a generalized contact structure (of a simpler type). 
This local normal form theorem can be seen as a manual to build up 
examples which are not included in the extreme cases by taking products. On the other hand, it is interesting to find 
examples which do not belong to this class, i.e. are not globally products. 
This is the aim of this work: we want to describe a method, similar to the one described in \cite{GenComNil} for generalized 
complex structures, to detect whenever a \emph{weakly regular} Jacobi structure can 
come from a generalized contact bundle. We use then using our method to find examples, 
which do not come from a global product. 
 
This note is organized as follows. First we recall the arena for generalized contact bundles, the omni-Lie algebroid, 
and the very definition of generalized contact structures. 
This is far from being complete and is more meant to fix notation than 
to give a proper introduction to the topic. A more detailed discussion of these notions can be found in \cite{DirJacBun}.  
The second step is to introduce the notion of weakly regular Jacobi
structures, which are the equivalent notion of a regular Poisson structures in Poisson geometry, \emph{transversally complex 
subbundles}, which are basically complex structures on a normal bundle, and their connection to generalized contact 
structures. The general idea is that every generalized contact structure with weakly regular Jacobi structure comes together 
with a transversally complex subbundle, 
but the converse is not true in general. We will see that we can find necessary and sufficient condition to find the converse
statement, which will be a cohomological obstruction in some spectral sequence.

In the last part we construct examples and a counterexample, with the help of the previous part. 
This part is in turn divided in 3 parts.  First of all,
we prove that every five dimensional nilpotent Lie group possesses an
invariant generalized contact structure. This can be seen as the odd dimensional analogue of the existence of generalized complex structures on six dimensional 
nilpotent Lie groups proven in \cite{SymplFol}. 
Our second examples are \emph{contact fiber bundles} over a complex base. It turns out that they posses generalized contact structures, since the obstructions of the previous parts become trivial.

Finally, we discuss a specific counterexample, i.e. an example where the obstructions do not vanish.
  
\section{Preliminaries and Notation}
This introductory section is divided into two parts: first we recall the Atiyah algebroid of a vector bundle and the 
corresponding $Der$-complex with applications to contact and Jacobi geometry. Afterwards, we introduce the arena for 
generalized geometry in odd dimensions, the omni-Lie algebroids, and give a quick reminder of generalized contact bundles
together with the properties we will need afterwards.    

\subsection{The Atiyah algebroid and the $\mathrm{Der}$-complex}
The notions of  Atiyah algebroid of a vector bundle and the associated $Der$-complex are known and are used in many 
other situations. This section is basically meant to fix notation. A more complete introduction to this  can be found in 
\cite{GenConBun}, which also discusses the notion of Dirac structures on the \emph{omni-Lie algebroid} of line bundles in more detail, nevertheless 
the notion of Omni-Lie algebroids was first defined in \cite{CHEN2010799}, in order to study Lie algebroids and local Lie 
algebra structures on vector bundles.

\begin{definition}
Let $E\to M$ be a vector bundle. A derivation is a map $\Delta\colon \Secinfty(E)\to \Secinfty(E)$, fulfilling 
	\begin{align*}
	\Delta(fe)=X(f)e+f\Delta(e) \ \forall f\in\Cinfty(M)\ \text{ and } \forall e\in\Secinfty(E),
	\end{align*}
for a necessarily unique $X\in\Secinfty(TM)$.
\end{definition}

\begin{remark}\label{Rem: Diffop}
Derivations of a vector bundle form a subspace of the first order differential operators from $E$ to itself. Moreover, if the 
vector bundle has rank one then each differential operator is a derivation.  
\end{remark}

\begin{lemma}
Let $E\to M$ be a vector bundle. The derivations of of $E\to M$ are the sections of a Lie algebroid $DE\to M$. The 
Lie bracket of $\Secinfty(DE)$ is the commutator. An element in the fiber $D_p E$ is a map  
$\delta_p\colon\Secinfty(E)\to E_P$, such that 
 \begin{align*}
	\delta_p(fe)=v_p(f)e(p)+f(p)\delta(e) \ \forall f\in\Cinfty(M)\ \text{ and } \forall e\in\Secinfty(E),
	\end{align*}
for a necessarily unique $v_p\in T_p M$. The anchor, or \emph{symbol}, $\sigma\colon DE\to TM$ the assignment 
$\sigma(\delta_p)=v_p$.   	
Moreover,  $DE\to M$ is called  Atiyah algebroid of $E\to M$.  
\end{lemma}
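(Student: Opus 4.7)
The plan is to build $DE$ fiberwise according to the pointwise characterization and then verify it assembles into a vector bundle, a Lie algebroid, and that the commutator and symbol play the claimed roles.

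First I would introduce, at each point $p\in M$, the vector space $D_pE$ of $\R$-linear maps $\delta_p\colon\Secinfty(E)\to E_p$ satisfying the pointwise Leibniz rule for some $v_p\in T_pM$ (noting that $v_p$ is uniquely determined by $\delta_p$: if two symbols worked, their difference would kill $fe$ for all $f$ vanishing at $p$ and all $e$, hence would be zero). This gives a well-defined assignment $p\mapsto D_pE$ together with a pointwise linear symbol $\sigma_p\colon D_pE\to T_pM$, $\delta_p\mapsto v_p$. Its kernel is $\mathrm{End}(E_p)$, since a derivation with zero symbol is $C^\infty(M)$-linear at $p$.

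Next I would show that $DE=\bigsqcup_p D_pE$ carries a vector bundle structure whose sections are exactly the derivations. The clean way is to pick an auxiliary linear connection $\nabla$ on $E$: for every $X\in\Secinfty(TM)$, $\nabla_X$ is a derivation with symbol $X$, so $\nabla$ provides a right inverse to $\sigma$. Combined with the kernel identification above, this yields a short exact sequence of vector bundles
\begin{equation*}
0 \longrightarrow \mathrm{End}(E) \longrightarrow DE \xrightarrow{\;\sigma\;} TM \longrightarrow 0,
\end{equation*}
split by $\nabla$. Hence $DE\cong TM\oplus\mathrm{End}(E)$ as a vector bundle, independently of $\nabla$. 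The identification at the level of sections is $\Delta\mapsto(\sigma(\Delta),\Delta-\nabla_{\sigma(\Delta)})$, and the Leibniz rule shows this is a $C^\infty(M)$-linear isomorphism onto $\Secinfty(TM\oplus\mathrm{End}(E))$.

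For the Lie algebroid structure I would check that the commutator $[\Delta_1,\Delta_2]=\Delta_1\circ\Delta_2-\Delta_2\circ\Delta_1$ of two derivations is again a derivation, with symbol $[\sigma(\Delta_1),\sigma(\Delta_2)]$; this is a direct computation using the Leibniz rule twice, in which all non-Leibniz terms cancel by symmetry. The Jacobi identity is automatic because the bracket is a commutator of operators. The Leibniz rule for the anchor,
\begin{equation*}
[\Delta_1,f\Delta_2]=\sigma(\Delta_1)(f)\,\Delta_2+f[\Delta_1,\Delta_2],
\end{equation*}
is a one-line calculation from $\Delta_1(f\Delta_2(e))=\sigma(\Delta_1)(f)\Delta_2(e)+f\Delta_1\Delta_2(e)$.

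The main obstacle is really the vector-bundle statement: one must be sure the pointwise fiber $D_pE$ defined via the Leibniz rule is the fiber of an actual smooth vector bundle whose smooth sections coincide with derivations. The splitting argument above, using a globally defined connection on $E$ (which exists by a partition of unity), bypasses any Serre–Swan machinery and makes both the smoothness and the short exact sequence transparent. All remaining assertions of the lemma then follow from the formulas already derived.
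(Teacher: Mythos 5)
Your proposal is correct, but there is nothing in the paper to compare it against: the lemma is stated in the preliminaries as a known fact, with no proof given (the author explicitly defers to the literature, e.g.\ the references on omni-Lie algebroids and generalized contact bundles, for details). Your argument is the standard one and it works: uniqueness of the symbol, identification of the zero-symbol part with endomorphisms, an auxiliary connection to split the sequence and endow $DE=\bigsqcup_p D_pE$ with a smooth vector bundle structure whose sections are exactly the derivations, and then the routine verifications that the commutator of derivations is a derivation with symbol the bracket of the symbols and that the anchor satisfies the Leibniz rule. Note that your splitting argument reproduces exactly the exact sequence $0\to\mathrm{End}(E)\to DE\to TM\to 0$ that the paper records right after the lemma (the Spencer sequence), so your route is also consistent with how the paper uses the result. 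Two small steps you assert rather than prove, and which you may want to spell out: (1) to see that a zero-symbol element of $D_pE$ defines an element of $\mathrm{End}(E_p)$ you need that it annihilates every section vanishing at $p$ (write such a section locally as a combination $\sum_i f_i e_i$ with $f_i(p)=0$ using a frame and a bump function); (2) to see that $\Delta-\nabla_{\sigma(\Delta)}$ is a bundle endomorphism you use the standard fact that a $C^\infty(M)$-linear operator on sections is tensorial. Both are routine localization arguments, so they are glosses rather than gaps.
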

The Atiyah algebroid fits into the following exact sequence of Lie algebroids, which is called the Spencer sequence,  
	\begin{align*}
	0\to \End(E)\to DE \to TM \to 0,
	\end{align*}
where the Lie algebroid structure of $\End(E)$ is the (pointwise) commutator and the trivial anchor. The first arrow is given 
by the inclusion.
With this we see immediatly that 
$\rank(DE)=\rank(E)^2+\dim(M)$.

We assign a vector bundle $E\to M$ to a Lie algebroid $DE\to M$. A natural question is: is 
this assignment functorial? In fact it is not, unless we restrict the category of vector bundles by just taking regular 
vector bundle morphisms, i.e. vector bundle morphisms which are fiber-wise invertible. Note that a regular vector bundle 
morphism  $\Phi\colon E\to E'$ covering  a smooth map $\phi\colon M\to M'$ allows us to define the pull-back $\Phi^*(e')$ of 
a section $e'\in\Secinfty(E')$ by putting
	\begin{align*}
	(\Phi^* e')(p)=\Phi_p^{-1}(e'(\phi(p)).
	\end{align*}

\begin{lemma}
Let $E\to M$ and $E'\to M'$ be vector bundles and let  $\Phi\colon E\to E'$ be  a regular vector bundle morphism covering 
a smooth map
$\phi\colon M\to M'$. Then the map $D\Phi\colon DE\to DE'$ defined by 
	\begin{align*}
	D\Phi(\delta_p)(e')= \Phi_p(\delta_p(\Phi^*e'))
	\end{align*}	
is a Lie algebroid morphism covering $\phi$. Moreover, this assignment makes $D$ a 
functor from the category of vector bundles with regular vector bundle  morphisms to the category of Lie algebroids.   
\end{lemma}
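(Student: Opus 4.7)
The plan is to proceed in three steps, verifying (i) that the formula is well-posed and produces a derivation with the right symbol, (ii) that the resulting bundle map is a Lie algebroid morphism in the sense appropriate to algebroids over different bases, and (iii) the functorial identities.

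First I would check that $D\Phi(\delta_p)$ is a derivation of $E'$ at $\phi(p)$. Because $\Phi$ is fibre-wise invertible one has $\Phi^*(fe')=(\phi^*f)\,\Phi^*e'$ for all $f\in\Cinfty(M')$ and $e'\in\Secinfty(E')$. Applying the Leibniz rule for $\delta_p$ and then the fibre-wise linearity of $\Phi_p$ gives
\[
D\Phi(\delta_p)(fe')=v_p(\phi^*f)\,e'(\phi(p))+f(\phi(p))\,D\Phi(\delta_p)(e'),
\]
where $v_p=\sigma(\delta_p)$. Since $v_p(\phi^*f)=(d\phi)_p(v_p)(f)$, this simultaneously shows well-posedness, identifies the symbol as $(d\phi)_p(v_p)$, and yields the anchor relation $\sigma'\circ D\Phi=d\phi\circ\sigma$. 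Fibre-wise linearity and smoothness of $D\Phi$ are immediate from the defining formula.

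Secondly, I would verify the Lie algebroid morphism property. Since $\phi$ is a general smooth map, the appropriate notion is formulated through $\Phi$-related sections: I would show that if $\delta,\mu\in\Secinfty(DE)$ are $\Phi$-related to $\delta',\mu'\in\Secinfty(DE')$, meaning $D\Phi\circ\delta=\delta'\circ\phi$ and analogously for $\mu$, then $[\delta,\mu]$ is $\Phi$-related to $[\delta',\mu']$. Unwinding the defining formula of $D\Phi$, the relatedness of $\delta$ with $\delta'$ is equivalent to $\delta(\Phi^*e')=\Phi^*(\delta'(e'))$ for every $e'\in\Secinfty(E')$. Applying this identity twice and using that the Lie bracket on both Atiyah algebroids is the commutator immediately produces $[\delta,\mu](\Phi^*e')=\Phi^*([\delta',\mu'](e'))$, which is precisely the $\Phi$-relatedness of the two commutators. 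The extension to pairs of sections that are not globally $\Phi$-related is handled by writing them locally as $\Cinfty(M)$-combinations of pull-backs of sections from $E'$ (which is possible exactly because the regularity of $\Phi$ gives $E\cong\phi^*E'$) and invoking the anchor identity from the previous step. I expect this to be the main obstacle, since the naive expression $[D\Phi\circ\delta,D\Phi\circ\mu]$ does not even make sense when the bases differ, and the entire argument rests on the regularity hypothesis as the thing that allows $\Phi^*$ to act on arbitrary sections of $E'$.

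Lastly I would establish functoriality. For a second regular vector bundle morphism $\Psi\colon E'\to E''$ covering $\psi\colon M'\to M''$ one has $(\Psi\circ\Phi)^*=\Phi^*\circ\Psi^*$ and $(\Psi\circ\Phi)_p=\Psi_{\phi(p)}\circ\Phi_p$; a direct substitution into the definition of $D$ then yields $D(\Psi\circ\Phi)=D\Psi\circ D\Phi$, while $D(\mathrm{id}_E)=\mathrm{id}_{DE}$ is obvious.
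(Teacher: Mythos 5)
The paper itself offers no proof of this lemma: it is stated as standard background in the preliminaries (with the reader pointed to the cited literature), so there is no argument of the paper to compare yours against, and I can only assess your proposal on its own terms. Your first and third steps are correct and complete: from $\Phi^*(fe')=(\phi^*f)\,\Phi^*e'$ the Leibniz computation shows $D\Phi(\delta_p)\in D_{\phi(p)}E'$ with symbol $d\phi_p(\sigma(\delta_p))$, and functoriality is immediate from $(\Psi\circ\Phi)^*=\Phi^*\circ\Psi^*$ together with $(\Psi\circ\Phi)_p=\Psi_{\phi(p)}\circ\Phi_p$.

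The gap is in your second step, precisely where you yourself locate the main obstacle. The computation that brackets of $D\Phi$-related sections are again $D\Phi$-related is correct, but it does not by itself establish the morphism property, because in general there are too few related sections: the fibrewise map $D\Phi\colon D_pE\to D_{\phi(p)}E'$ has symbol part $d\phi_p$ and endomorphism part conjugation by $\Phi_p$, so it is surjective only where $d\phi_p$ is, and relatedness also forces $D\Phi(\delta_{p_1})=D\Phi(\delta_{p_2})$ whenever $\phi(p_1)=\phi(p_2)$; hence a given $\delta_p$ need not lie on any section $\Phi$-related to a section of $DE'$. The repair you sketch---writing sections ``locally as $\Cinfty(M)$-combinations of pull-backs of sections from $E'$''---conflates sections of $E$ with sections of $DE$: regularity does give $E\cong\phi^*E'$ and generators of $\Secinfty(E)$, but that is not the decomposition the morphism condition requires. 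The standard completion is the Higgins--Mackenzie formulation: write $D\Phi\circ\delta$, viewed as a section of $\phi^*DE'$, locally as $\sum_i u_i\,(\delta_i'\circ\phi)$ with $u_i\in\Cinfty(M)$ and $\delta_i'\in\Secinfty(DE')$ (possible for any pull-back bundle), equivalently $\delta(\Phi^*e')=\sum_i u_i\,\Phi^*(\delta_i'(e'))$ for all $e'$, similarly $D\Phi\circ\mu=\sum_j v_j\,(\mu_j'\circ\phi)$, and then verify
\begin{align*}
D\Phi\circ[\delta,\mu]=\sum_{i,j}u_iv_j\,\bigl([\delta_i',\mu_j']\circ\phi\bigr)
+\sum_j\sigma(\delta)(v_j)\,(\mu_j'\circ\phi)-\sum_i\sigma(\mu)(u_i)\,(\delta_i'\circ\phi),
\end{align*}
whose extra anchor-derivative terms your sketch omits; the same commutator/Leibniz computation you already performed, applied to sections $\Phi^*e'$, delivers exactly this identity. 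Alternatively, and closer in spirit to the paper, one can use the equivalent characterization that $D\Phi$ is a Lie algebroid morphism if and only if the pull-back of Atiyah forms of Remark \ref{Rem: Pull-back} intertwines $\D_{E'}$ and $\D_E$.
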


Note that for a given vector bundle $E\to M$, $DE$ has a tautological representation on $E$. Hence, we can define its de 
Rham complex with coefficients in $E$.

\begin{definition}     
Let $E\to M$ be a vector bundle. The complex $(\Omega_E^\bullet(M):=\Secinfty(\Anti^\bullet (DE)^*\otimes E), \D_E)$, where we define 
	\begin{align*}
	\D_E\alpha(\Delta_0,\dots, \Delta_k)&
	=\sum_{i=0}^k (-1)^i \Delta_i(\alpha(\Delta_0,\dots,\widehat{\Delta_i},\dots,\Delta_k)\\&
	+\sum_{i<j}(-1)^{i+j}\alpha([\Delta_i,\Delta_j],\Delta_0,\dots,
	\widehat{\Delta_i},\dots,\widehat{\Delta_j},\dots,\Delta_k)
	\end{align*}
for $\alpha\in \Omega_E^k(M)$  and $\Delta_i\in \Secinfty(DE)$, is called the $Der$-complex of 
$E$. We call elements of this complex  \emph{Atiyah forms}.
\end{definition}

\begin{lemma}
Let $E\to M$ be a vector bundle. The $Der$-complex $(\Omega_E^\bullet(M), \D_E)$ is acyclic with contracting 
homotopy $\iota_\mathbbm{1}$, the contraction with the identity operator $\mathbbm{1}\in \Secinfty(DE)$. 
\end{lemma}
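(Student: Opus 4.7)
The plan is to use the Cartan calculus on the Lie algebroid $DE$ with coefficients in its tautological representation on $E$, applied to the canonical element $\mathbbm{1}\in\Secinfty(DE)$. The starting point is the observation that $\mathbbm{1}$ is a \emph{very special} derivation: its symbol vanishes (so it actually sits in $\End(E)\subset DE$), it acts as the identity on $\Secinfty(E)$ by definition, and its bracket with any other derivation is trivial, since
\begin{equation*}
[\mathbbm{1},\Delta](e)=\mathbbm{1}(\Delta(e))-\Delta(\mathbbm{1}(e))=\Delta(e)-\Delta(e)=0
\end{equation*}
for every $\Delta\in\Secinfty(DE)$ and every $e\in\Secinfty(E)$.

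Next, I would invoke (or quickly verify from the explicit formula for $\D_E$ given above) the Cartan magic formula for the $Der$-complex, namely
\begin{equation*}
L_\Delta=\iota_\Delta\D_E+\D_E\iota_\Delta
\end{equation*}
for any $\Delta\in\Secinfty(DE)$, where $L_\Delta$ is the Lie derivative of Atiyah forms along $\Delta$ with respect to the tautological representation. This is a standard identity for the de Rham complex of a Lie algebroid with values in a representation, so no separate calculation is needed.

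Specializing to $\Delta=\mathbbm{1}$, I would then compute $L_\mathbbm{1}$ directly from its standard expression
\begin{equation*}
(L_\mathbbm{1}\alpha)(\Delta_1,\dots,\Delta_k)=\mathbbm{1}\bigl(\alpha(\Delta_1,\dots,\Delta_k)\bigr)-\sum_i\alpha(\Delta_1,\dots,[\mathbbm{1},\Delta_i],\dots,\Delta_k).
\end{equation*}
The first term is simply $\alpha(\Delta_1,\dots,\Delta_k)$ because $\mathbbm{1}$ acts as the identity on $\Secinfty(E)$, while every summand in the second term vanishes by the bracket computation above. Hence $L_\mathbbm{1}=\id$ on every $\Omega_E^k(M)$, and the Cartan formula becomes $\iota_\mathbbm{1}\D_E+\D_E\iota_\mathbbm{1}=\id$, exhibiting $\iota_\mathbbm{1}$ as a contracting homotopy. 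Acyclicity follows: for any closed $\alpha$, $\alpha=\D_E(\iota_\mathbbm{1}\alpha)$; in degree zero the only closed element is $0$, since a closed $e\in\Secinfty(E)$ satisfies $\mathbbm{1}(e)=e=0$.

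There is no real obstacle here; the only point requiring a little care is making the Cartan formula precise in the present setting (the representation is on $E$ itself rather than on the trivial line bundle). I would either cite it from the Lie algebroid literature or verify it in one line by noting that both sides are derivations of the graded commutative algebra of Atiyah forms with values in $E$ that agree on $\Omega_E^0(M)=\Secinfty(E)$ and on the generators of $\Omega_E^1(M)$.
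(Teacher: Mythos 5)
Your proof is correct, and the paper itself states this lemma without proof, so there is nothing to conflict with: the computation $[\mathbbm{1},\Delta]=0$ together with $\mathbbm{1}(e)=e$ gives $\Lie_{\mathbbm{1}}=\id$, and the Cartan formula for the Lie algebroid $DE$ with coefficients in its tautological representation then yields $\iota_{\mathbbm{1}}\D_E+\D_E\iota_{\mathbbm{1}}=\id$, which is the standard argument for this well-known fact. The only point to polish is your closing one-line justification of the Cartan formula: $\Omega_E^\bullet(M)$ is a module over the graded algebra $\Secinfty(\Anti^\bullet(DE)^*)$ rather than a graded commutative algebra itself, so either cite the Cartan calculus for Lie algebroid representations or simply insert $\mathbbm{1}$ into the Koszul formula defining $\D_E$, where the vanishing of the bracket terms and the identity action of $\mathbbm{1}$ make the homotopy identity (including the degree-zero case) drop out directly.
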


\begin{remark}\label{Rem: Pull-back}
For a regular vector bundle morphism $\Phi\colon E \to E'$ covering the smooth map $\phi\colon M\to M'$, we have a pull-back
for Atiyah forms :
	\begin{align*}
	(\Phi^*\alpha')(\Delta_1,\dots, \Delta_k):= \Phi_p^{-1}
	 \big(\alpha'_{\phi(p)}(D\Phi(\Delta_1),\dots, D\Phi(\Delta_k))\big)
	\end{align*}
for $\Delta_i\in D_p E$, $\alpha\in \Omega_{E'}(M')$ and $p\in M$. This pull-back commutes with the differential $\D_E$.
\end{remark}

\begin{remark}
In view of Remark \ref{Rem: Diffop},  we have  $J^1L=(DL)^*\otimes L$, where $J^1 L$ 
is first jet bundle, i.e. $\Omega^1_L(M)=\Secinfty(J^1L)$. 
\end{remark}

\begin{remark}[Contact Geometry reloaded] A contact manifold is a pair $(M, \xi)$ where $M$ is a manifold and 
$\xi\subset TM$ is a co-dimension one maximally non-integrable distribution. Let us define the \emph{contact 1-form} 
$\Theta\colon TM\to L$ for $L:=TM/\xi$ simply as the projection. Now we consider 
	\begin{align*}
	\Theta \circ \sigma\colon DL \to L, 
	\end{align*}
which is Atiyah 1-form and $\omega=\D_L\sigma^*\Theta$. We call $\omega$ a 
contact 2-form. The maximal non-integrability of $\xi$ is equivalent to  the invertibility of 
	\begin{align*}
	\omega^\flat \colon DL\to (DL)^*\otimes L.
	\end{align*} 	
Since the $Der$-complex is acyclic the data of a line bundle and a non-degenerate and closed Atiyah 2-form in the 
$Der$-complex is actually equivalent to a contact structure.  From now on, we will always take this point of view on contact 
structure. So, in the sequel, a contact structure will be just a contact $2$-form i.e. a closed and non-degenerate Atiyah 2-
form on a line bundle.
Using  Remark \ref{Rem: Pull-back}, we can define \emph{contactomorphisms}:
Let $(L\to M,\omega )$ and $(L'\to M',\omega')$ be two contact manifolds and let  $\Phi\colon L\to L'$ be a regular line 
bundle morphism. We call $\Phi$ a contactomorphism, if $\Phi^*\omega'=\omega$.
This approach has several advantages, which will be clearer later on.
\end{remark}

We discuss now briefly Jacobi brackets
\begin{definition} 
 A Jacobi bracket on a line bundle $L\to M$ is a Lie bracket $\{-,-\}\colon \Secinfty(L)\times \Secinfty(L)\to \Secinfty(L)$, such that the bracket is a derivation in each slot.
\end{definition}

\begin{remark}
Let $\{-,-\}$ be a Jacobi bracket on a line bundle $L\to M$. Then  there is a unique tensor, called the Jacobi tensor, $J\in 
\Secinfty(\Anti^2(J^1L)^*\otimes L )$, such that 	
	\begin{align*}
	\{\lambda,\mu\}=J(j^1\lambda, j^1 \mu)
	\end{align*}	 
for $\lambda,\mu\in\Secinfty(L)$. Conversely, every $L$-valued $2$-form $J$ on $J^1 L$ defines a skew-symmetric bilinear bracket $\{-,-\}$, but the latter needs not to be a Jacobi bracket. Specifically, it does not need to fulfill the Jacobi identity. However, there is the  notion of a Gerstenhaber-Jacobi bracket 
	\begin{align*}
	[-,-]\colon \Secinfty(\Anti^i(J^1L)^*\otimes L )\times \Secinfty(\Anti^j(J^1L)^*\otimes L )\to
	\Secinfty(\Anti^{i+j-1}(J^1L)^*\otimes L ), 
	\end{align*}
such that the Jacobi identity of $\{-,-\}$ is equivalent to $[J,J]=0$	
see \cite[Chapter 1.3]{2017arXiv170508962T} for a detailed discussion. Finally, a Jacobi tensor defines  a map 
$J^\sharp\colon J^1L\to (J^1L)^*\otimes L=DL$.   
\end{remark}

\begin{lemma}
Let $L\to M$ be a line bundle and let $\omega\in \Secinfty(\Anti^2 (DL)^*\otimes L)$ be a contact 2-form. 
The inverse of the map 
	\begin{align*}
	\omega^\flat\colon DL\to (DL)^*\otimes L=J^1L 
	\end{align*}
is the sharp map of a tensor $J\in \Secinfty(\Anti^2 (J^1 L)^*\otimes L)$, i.e. 
	\begin{align*}
	(\omega^\flat)^{-1} =J^\sharp\colon J^1 L\to (J^1L)^*\otimes L= DL, 
	\end{align*}
for some $J\in \Secinfty(\Anti^2(J^1L)^*\otimes L)$.
Moreover, $J$ is a Jacobi tensor and  we will refer to it as the Jacobi tensor corresponding to the contact 2-form $\omega$.
\end{lemma}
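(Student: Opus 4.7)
The plan is to mirror, in the Atiyah algebroid / $Der$-complex setting, the classical equivalence between a closed non-degenerate 2-form and its associated Poisson bivector. The argument splits into an algebraic inversion step and an integrability step.

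\textbf{Step 1 (pointwise inversion).} Non-degeneracy of $\omega$ means that $\omega^\flat$ is a fibrewise linear isomorphism, so its fibrewise inverse is automatically smooth and yields a bundle map $J^\sharp:=(\omega^\flat)^{-1}\colon J^1L\to DL$. Via the identification $DL=(J^1L)^*\otimes L$, define
\begin{equation*}
J(\alpha,\beta):=\langle J^\sharp\alpha,\beta\rangle\in L,
\end{equation*}
where $\langle-,-\rangle$ denotes the tautological $L$-valued pairing. Writing $\alpha=\omega^\flat X$ and $\beta=\omega^\flat Y$ gives $J(\alpha,\beta)=\omega(Y,X)$, so skew-symmetry of $J$ is inherited from that of $\omega$, and $J^\sharp$ is indeed the sharp map of the resulting $J\in\Secinfty(\Anti^2(J^1L)^*\otimes L)$.

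\textbf{Step 2 (Jacobi identity).} It remains to verify $[J,J]=0$ in the Gerstenhaber-Jacobi bracket, i.e.\ that $\{\lambda,\mu\}:=J(j^1\lambda,j^1\mu)$ satisfies the Jacobi identity on $\Secinfty(L)$. Introduce the Hamiltonian derivation $X_\lambda:=J^\sharp(j^1\lambda)$. By Step 1 one has $\{\lambda,\mu\}=\omega(X_\mu,X_\lambda)=X_\lambda(\mu)$. The crucial observation is that the Lie derivative of $\omega$ along $X_\lambda$ in the $Der$-complex vanishes: since $\D_L\lambda=j^1\lambda=\omega^\flat X_\lambda=\iota_{X_\lambda}\omega$, Cartan's magic formula in the $Der$-complex gives
\begin{equation*}
\mathcal{L}_{X_\lambda}\omega=\D_L\iota_{X_\lambda}\omega+\iota_{X_\lambda}\D_L\omega=\D_L\D_L\lambda+0=0,
\end{equation*}
using $\D_L^2=0$ and the closedness of $\omega$. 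Expanding the Jacobiator via the resulting identity
\begin{equation*}
X_\lambda\bigl(\omega(X_\mu,X_\nu)\bigr)=\omega([X_\lambda,X_\mu],X_\nu)+\omega(X_\mu,[X_\lambda,X_\nu])
\end{equation*}
and summing cyclically over $(\lambda,\mu,\nu)$, the bracket terms cancel in pairs, so the Jacobiator vanishes.

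\textbf{Main obstacle.} The decisive ingredient is the vanishing of $\mathcal{L}_{X_\lambda}\omega$ in the $Der$-complex, which rests on the two identities $\D_L\lambda=j^1\lambda$ (a general feature of the $Der$-differential in degree zero) and $\iota_{X_\lambda}\omega=j^1\lambda$ (the defining property of the Hamiltonian derivation). Once these are in place, the argument is entirely parallel to the symplectic-to-Poisson correspondence, with Lie derivatives along vector fields replaced by the natural action of derivations on $\Secinfty(L)$ and the exterior derivative replaced by $\D_L$. The only delicate point is the bookkeeping of signs between the pairing convention $DL\otimes J^1L\to L$, the sharp-map convention $J(\alpha,\beta)=\langle J^\sharp\alpha,\beta\rangle$, and the Cartan formula.
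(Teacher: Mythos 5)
The paper itself records this lemma without proof (it is the standard contact--Jacobi correspondence in the Der-complex), so the comparison is with the standard argument your proof is clearly modelled on. Your Step 1 is fine, and the key observation of Step 2 is also fine: $\D_L\lambda=j^1\lambda=\iota_{X_\lambda}\omega$ and Cartan calculus in the Der-complex give $\Lie_{X_\lambda}\omega=0$. The flaw is in the very last step: the bracket terms in the cyclic sum do \emph{not} cancel in pairs. With your conventions, $\{\lambda,\{\mu,\nu\}\}=X_\lambda(\omega(X_\nu,X_\mu))=\omega([X_\lambda,X_\nu],X_\mu)+\omega(X_\nu,[X_\lambda,X_\mu])$, and when you sum cyclically the would-be cancelling partners, e.g. $\omega([X_\lambda,X_\nu],X_\mu)$ and $\omega(X_\mu,[X_\nu,X_\lambda])$, are in fact \emph{equal} (two sign flips: skew-symmetry of $\omega$ and antisymmetry of the commutator), so they add. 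The leftover is $2\sum_{\mathrm{cyc}}\omega([X_\lambda,X_\nu],X_\mu)$, which is not manifestly zero; using $\iota_{X_\mu}\omega=j^1\mu$ it is itself a multiple of the Jacobiator, so as written the argument is circular/incomplete (one can still extract $3\,\mathrm{Jac}=0$ from it, but that extra identity is exactly what is missing from your text).

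The standard repair, entirely within your framework, is one more line of Cartan calculus: from $\Lie_{X_\lambda}\omega=0$,
\begin{align*}
\iota_{[X_\lambda,X_\mu]}\omega=\Lie_{X_\lambda}\iota_{X_\mu}\omega-\iota_{X_\mu}\Lie_{X_\lambda}\omega=\Lie_{X_\lambda}\D_L\mu=\D_L\big(X_\lambda(\mu)\big)=\D_L\{\lambda,\mu\}=\iota_{X_{\{\lambda,\mu\}}}\omega,
\end{align*}
so non-degeneracy gives $[X_\lambda,X_\mu]=X_{\{\lambda,\mu\}}$, and then $\{\{\lambda,\mu\},\nu\}=[X_\lambda,X_\mu](\nu)=\{\lambda,\{\mu,\nu\}\}-\{\mu,\{\lambda,\nu\}\}$ is the Jacobi identity. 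Alternatively, expand $0=\D_L\omega(X_\lambda,X_\mu,X_\nu)$ with the paper's formula for $\D_L$ and substitute $\omega(X_\mu,X_\nu)=-\{\mu,\nu\}$ and $\omega([X_\lambda,X_\mu],X_\nu)=-[X_\lambda,X_\mu](\nu)$; the right-hand side collapses exactly to the Jacobiator, so closedness of $\omega$ gives the claim directly. Either fix makes your proof complete and in line with the intended (standard) argument.
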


When $L$ is the trivial line bundle, than the notion of Jacobi bracket boils down to that of \emph{Jacobi pair}.

\begin{remark}[Trivial Line bundle]\label{Rem: TrivLine}
Let $\mathbb{R}_M\to M$ be the trivial line bundle and let 
$J$ be a Jacobi tensor on it. 
Let us denote by $1_M\in \Secinfty(\mathbb{R}_M)$ the canonical global section. 
Using the canonical connection 
	\begin{align*}
	\nabla\colon TM\ni v \mapsto (f\cdot1_M\mapsto v(f)1_M)\in D\mathbb{R}_M,
	\end{align*}
we can see that $DL\cong TM\oplus \mathbb{R}_M$ and hence 
	\begin{align*}
	J^1\mathbb{R}_M=(D\mathbb{R}_M)^*\otimes \mathbb{R}_M= T^*M\oplus \mathbb{R}_M.
	\end{align*}	 
With this splitting, we see that 
	\begin{align*}
	J=\Lambda + \mathbbm{1}\wedge E
	\end{align*}	 	
for some $(\Lambda,E)\in\Secinfty(\Anti^2 TM\oplus TM)$. The Jacobi identity is equivalent to $[\Lambda,\Lambda]+E\wedge
\Lambda=0$ and $\Lie_E\Lambda=0$. The pair $(\Lambda,E)$ is often referred to as \emph{Jacobi pair}.
  Moreover, if we denote by $
\mathbbm{1}^*\in \Secinfty(J^1\mathbb{R}_M)$ the canonical 
section then we can write any 
$\psi\in J^1\mathbb{R}_M$ as $\psi=\alpha+r\mathbbm{1}^*\in \Secinfty(J^1\mathbb{R}_M) $, 
for some $\alpha\in T^*M$ and $r\in \mathbb{R}$. We obtain 
	\begin{align*}
	J^\sharp(\alpha+r\mathbbm{1}^*)=\Lambda^\sharp(\alpha)+r E-\alpha(E)\mathbbm{1}.	
	\end{align*}	
A more detailed discussion about Jacobi structures on trivial line bundles can be found in
 \cite[Chapter 2]{2017arXiv170508962T}
\end{remark}

\subsection{Generalized Geometry in odd dimensions}
In this section we briefly discuss generalized geometry in odd dimensions. This  introduction is far from 
being complete. For a more detailed outlook to the topic we refer the reader to \cite{DirJacBun} and \cite{GenConBun}.
The arena for  generalized geometry in odd dimensions is the so-called \emph{omni-Lie algebroid}.

\begin{definition}
Let $L\to M$ be a line bundle and let $H\in\Omega_L^3(M)$ be a closed Atiyah 2-form. The vector bundle $\mathbb{D}
L:=DL\oplus J^1 L$ together with 
	\begin{enumerate}
	\item the (Dorfman-like, H-twisted) bracket on sections
		\begin{align*}
		[\![(\Delta_1,\psi_1) ,(\Delta_2,\psi_2 )]\!]_H
		=([\Delta_1,\Delta_2],\Lie_{\Delta_1} \psi_2- \iota_{\Delta_2}\D_L\psi_1+\iota_{\Delta_1}\iota_{\Delta_2}H)
		\end{align*}		 
	\item the non-degenerate $L$-valued pairing 
		\begin{align*}
		\bla (\Delta_1,\psi_1) ,(\Delta_2,\psi_2 )\bra := \psi_1(\Delta_2)+\psi_2(\Delta_1)
		\end{align*}
	\item the canonical projection $\pr_D\colon \mathbb{D}L\to DL$ 
	\end{enumerate}	
is called the (H-twisted) omni-Lie algebroid of $L\to M$.  
\end{definition}

As in even dimensions, the objects of interest are certain subbundles of the Omni-Lie algebroid, or in our case its 
complexification.

\begin{definition}
Let $L\to M$ be a line bundle and let $H\in\Omega_L^3(M)$ be closed. A subbundle 
$\mathcal{L}\subseteq\mathbb{D}_\mathbb{C} L$ is called \emph{almost generalized contact}, 
if the following conditions are
fulfilled:
	\begin{enumerate}
	\item $\mathcal{L}$ is maximally isotropic with respect to (the $\mathbb{C}$-linear extension of) the  pairing 
	$\bla-,-\bra$.
	\item $\mathcal{L}\cap\cc{\mathcal{L}}=\{0\}$
	\end{enumerate}
If $\mathcal{L}$ is additionally involutive with respect to (the $\mathbb{C}$-linear extension of)  $[\![- ,-]\!]_H$, we
call $\mathcal{L}$ an $H$-generalized contact structure, if  $H=0$ we say just generalized contact structure.
Moreover, an $H$-generalized contact bundle is a line bundle together with a $H$-generalized contact structure.
\end{definition}

\begin{remark}
Note that so far, in  the literature only the case $H=0$ considered. The main motivation of introducing a non-trivial $H$ is, that if one considers just  a short exact sequence of vector bundles
	\begin{center}
		\begin{tikzcd}
 		0 \arrow[r]&  J^1 L   \arrow[r]& E\arrow[r] &  DL \arrow[r] \arrow[l, bend left, "\nabla^L"] & 0 \\
		\end{tikzcd}
	\end{center}
for a line bundle $L\to M$ and a splitting $\nabla^L\colon DL\to E$, then the closed Atiyah 3-form is the curvature of this splitting. 
\end{remark}

We can rephrase the definition in terms of an endomorphism of the omni-Lie algebroid, which shows the similarity to generalized complex geometry:

\begin{proposition}\label{Prop: GenConEnd}
Let $L\to M$ be a line bundle and let $H\in\Omega_L^3(M)$ be closed. For a $\mathbb{K}\in 
\Secinfty(\End\mathbb{D}L)$, fulfilling 
	\begin{enumerate}
	\item $\mathbb{K}^2=-\id$
	\item $\bla\mathbb{K}-,\mathbb{K}-\bra =\bla-,-\bra$
	\end{enumerate}
the $+i$-Eigenbundle is an almost generalized contact structure.
Moreover, every almost generalized contact structure $\mathcal L$ arises as the $+\I$-Eigenbundle of an endomorphism $\mathbb{K}$ satisfying i.)-ii.). If additionally 
	\begin{align*}
	\mathcal{N}^H_\mathbb{K}(A,B)
	:=[\![\mathbb{K}(A) ,\mathbb{K}(B)]\!]_H-\mathbb{K}([\![\mathbb{K}(A),B]\!]_H)-\mathbb{K}(
	[\![A,\mathbb{K}(B)]\!]_H)-[\![A,B]\!]_H=0 
	\end{align*}	  
holds for all $A,B\in\Secinfty(\mathbb{D}L)$, then then $\mathcal{L}$ is $H$-generalized contact. 
\end{proposition}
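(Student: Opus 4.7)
The plan is to prove the three assertions in sequence, exploiting the eigenbundle decomposition throughout.

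First, I would show that the $+\I$-eigenbundle $\mathcal{L}$ of a $\mathbb{K}$ satisfying (i)--(ii) is almost generalized contact. Since $\mathbb{K}^2=-\id$ and $\mathbb{K}$ is real, its eigenvalues on $\mathbb{D}_{\mathbb C}L$ are $\pm\I$ and complex conjugation interchanges the two eigenbundles, so that $\mathbb{D}_{\mathbb{C}}L=\mathcal{L}\oplus\cc{\mathcal{L}}$, yielding $\mathcal{L}\cap\cc{\mathcal{L}}=\{0\}$ and forcing $\mathcal{L}$ to have complex rank equal to half the rank of $\mathbb{D}L$. Isotropy of $\mathcal{L}$ is then immediate from (ii): for $A,B\in\mathcal{L}$ one has $\bla A,B\bra=\bla \mathbb{K}A,\mathbb{K}B\bra=-\bla A,B\bra$. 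Being isotropic of half the complex rank, $\mathcal{L}$ is maximally isotropic.

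Second, for the converse, given an almost generalized contact $\mathcal{L}$, I would simply define $\mathbb{K}$ to be multiplication by $\I$ on $\mathcal{L}$ and by $-\I$ on $\cc{\mathcal{L}}$. The decomposition $\mathbb{D}_{\mathbb{C}}L=\mathcal{L}\oplus\cc{\mathcal{L}}$ justifying this definition comes from combining $\mathcal{L}\cap\cc{\mathcal{L}}=\{0\}$ with maximal isotropy and a rank count. Since the recipe commutes with complex conjugation, $\mathbb{K}$ descends to a real endomorphism of $\mathbb{D}L$; the identity $\mathbb{K}^2=-\id$ is tautological, and pairing invariance is verified by decomposing $A=A_++A_-$ and $B=B_++B_-$ in the eigenbundle splitting, using only that $\mathcal{L}$ and $\cc{\mathcal{L}}$ are isotropic to get $\bla \mathbb{K}A,\mathbb{K}B\bra=\bla A_+,B_-\bra+\bla A_-,B_+\bra=\bla A,B\bra$.

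Third, for the integrability statement, I would show directly that the vanishing of $\mathcal{N}^H_{\mathbb{K}}$ is equivalent to involutivity of $\mathcal{L}$ under $[\![-,-]\!]_H$. By $\mathbb{C}$-bilinearity of $\mathcal{N}^H_{\mathbb{K}}$ in each slot it suffices to evaluate it on pairs of sections taken from the eigenbundle decomposition. On mixed pairs $A\in\mathcal{L}$, $B\in\cc{\mathcal{L}}$ (and vice versa), the four terms cancel identically, relying only on $\mathbb{K}A=\pm\I A$ and $\mathbb{K}B=\mp\I B$. For $A,B\in\mathcal{L}$ a short computation gives
\[
\mathcal{N}^H_{\mathbb{K}}(A,B)=-2\,[\![A,B]\!]_H-2\I\,\mathbb{K}[\![A,B]\!]_H=-4\,\pi_{\cc{\mathcal{L}}}[\![A,B]\!]_H,
\]
where $\pi_{\cc{\mathcal{L}}}$ is the projection onto $\cc{\mathcal{L}}$ along $\mathcal{L}$ (because $\id+\I\mathbb{K}=2\pi_{\cc{\mathcal{L}}}$ on the eigenbundle decomposition). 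This vanishes precisely when $[\![A,B]\!]_H\in\mathcal{L}$. The case $A,B\in\cc{\mathcal{L}}$ follows by conjugation, so $\mathcal{N}^H_{\mathbb{K}}\equiv 0$ if and only if $\mathcal{L}$ is involutive, as required.

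The only delicate point I foresee is a bookkeeping one: the Dorfman bracket is not $\Cinfty$-bilinear, so one has to be careful that the identities above are genuine pointwise computations in terms of sections of $\mathcal{L}$ and $\cc{\mathcal{L}}$, and not claims about the Nijenhuis being tensorial over all of $\mathbb{D}_{\mathbb{C}}L$. In practice the computation only uses $\mathbb{K}|_{\mathcal{L}}=\I\,\id$ and $\mathbb{K}|_{\cc{\mathcal{L}}}=-\I\,\id$ applied to sections, so no tensoriality hypothesis is really required, and the argument goes through.
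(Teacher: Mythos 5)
Your proof is correct: the eigenbundle decomposition $\mathbb{D}_\mathbb{C}L=\mathcal{L}\oplus\cc{\mathcal{L}}$, the isotropy computation, the reconstruction of $\mathbb{K}$ from an almost generalized contact $\mathcal{L}$, and the identity $\mathcal{N}^H_{\mathbb{K}}(A,B)=-2[\![A,B]\!]_H-2\I\,\mathbb{K}[\![A,B]\!]_H=-4\pi_{\cc{\mathcal{L}}}[\![A,B]\!]_H$ for sections $A,B$ of $\mathcal{L}$ all check out, and your closing remark rightly observes that only additivity and $\mathbb{C}$-linearity over constants of the Dorfman-like bracket are used, so no tensoriality is needed. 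The paper states this proposition without proof, treating it as standard material from the generalized contact literature, and your verification is precisely the standard eigenbundle argument that those references employ.
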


\begin{remark}
Using Proposition \ref{Prop: GenConEnd}, we will often call the endomorphism $\mathbb{K}$ itself an $H$-generalized 
contact structure. If just the conditions \textit{i.)} and \textit{ii.)} are fulfilled,  
we will refer to $\mathbb{K}$ as an almost generalized contact 
structure.
\end{remark}

\begin{lemma}
Let $L\to M$ be a line bundle, let $H\in\Omega_L^3(M)$ be closed and let  $\mathbb{K}\in 
\Secinfty(\End\mathbb{D}L)$ be an almost generalized contact structure. Then, using the decomposition
$\mathbb{D}L=DL\oplus J^1 L$,  $\mathbb{K}$ can be written as 
	\begin{align*}
	\mathbb{K}=
	\begin{pmatrix}
	\phi & J^\sharp \\
	\sigma^\flat & -\phi^*\\
	\end{pmatrix},
	\end{align*}
where $\phi\in\Secinfty(\End DL)$, $J\in\Secinfty(\Anti^2 (J^1 L)^*\otimes L)$ and 
$\sigma\in\Omega_L^3(M)$. If $\mathbb{K}$ is additionally  $H$-generalized contact, then $J$ is a Jacobi 
tensor.
\end{lemma}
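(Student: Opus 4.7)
The first claim is essentially a pointwise algebraic exercise. I would start by observing that conditions \textit{i.)} and \textit{ii.)} together force $\mathbb{K}$ to be skew with respect to the pairing, since $\bla \mathbb{K}A, B\bra = \bla \mathbb{K}^2 A, \mathbb{K}B\bra = -\bla A, \mathbb{K}B\bra$. Writing $\mathbb{K}$ in block form
\begin{align*}
\mathbb{K} = \begin{pmatrix} \phi & J^\sharp \\ \sigma^\flat & \psi \end{pmatrix}
\end{align*}
with respect to the splitting $\mathbb{D}L = DL \oplus J^1L$, plugging pure pairs into this skew-adjointness identity immediately pins down the remaining data: testing on two $DL$-arguments forces $\sigma^\flat$ to come from a skew Atiyah form $\sigma$; testing on two $J^1L$-arguments forces $J^\sharp$ to come from a skew tensor $J \in \Secinfty(\Anti^2(J^1L)^* \otimes L)$; and testing on a mixed pair forces $\psi = -\phi^*$.

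For the second claim, my plan is to apply the integrability identity $\mathcal{N}^H_\mathbb{K} \equiv 0$ to pairs of the form $A = (0, \alpha)$ and $B = (0, \beta)$ with $\alpha, \beta \in \Secinfty(J^1L)$ and then project the resulting equation onto $DL$. From the block form of the first part, $\mathbb{K}A = (J^\sharp \alpha, -\phi^* \alpha)$ and analogously for $B$. Since the $DL$-component of the Dorfman-like bracket vanishes whenever one entry has trivial $DL$-part, three of the four terms in $\mathcal{N}^H_\mathbb{K}(A,B)$ collapse cleanly on projection, leaving
\begin{align*}
[J^\sharp \alpha, J^\sharp \beta] = J^\sharp\bigl(\Lie_{J^\sharp \alpha}\beta - \iota_{J^\sharp \beta}\D_L\alpha\bigr).
\end{align*}
As a convenient bonus, the $H$-twist is confined to the $J^1L$-component of the Dorfman bracket, so it never enters this projection and the derived identity for $J$ is independent of $H$.

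The displayed identity, together with the skew-symmetry of $J$ from part one, is exactly the statement that $J^\sharp$ intertwines the Lie bracket on $DL$ with the Koszul-type bracket on $J^1L$ induced by $J$. This is the well-known characterization of a Jacobi tensor in the Gerstenhaber-Jacobi formalism recalled in the preliminaries, equivalent to $[J,J]=0$; hence $J$ is Jacobi. The main obstacle I expect is really only sign bookkeeping in the four Dorfman brackets feeding $\mathcal{N}^H_\mathbb{K}$, in particular tracking the $-\phi^*$ entry carefully so that it cancels on projection; no deeper input beyond the block form of part one and the definition of the Dorfman bracket is required.
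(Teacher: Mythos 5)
Your proposal is correct, and it is essentially the verification that the paper itself leaves out (its ``proof'' is a one-line appeal to \cite{2017arXiv171108310S} and \cite{GenConBun}): the block form follows from skew-adjointness of $\mathbb{K}$ with respect to $\bla-,-\bra$, which you correctly deduce from conditions \textit{i.)} and \textit{ii.)}, and testing $\mathcal{N}^H_{\mathbb{K}}=0$ on $(0,\alpha)$, $(0,\beta)$ and projecting to $DL$ gives exactly your displayed identity. Two small remarks. First, your blanket reason for discarding the $H$-twist (``confined to the $J^1L$-component, so it never enters this projection'') only covers the outer bracket $[\![\mathbb{K}A,\mathbb{K}B]\!]_H$; in the terms $\mathbb{K}[\![\mathbb{K}A,B]\!]_H$ and $\mathbb{K}[\![A,\mathbb{K}B]\!]_H$ the $J^1L$-component of the inner bracket \emph{does} reach the $DL$-projection through the $J^\sharp$-block of $\mathbb{K}$; the $H$-terms drop out there for a different reason, namely $\iota_{J^\sharp\alpha}\iota_{0}H=0=\iota_{0}\iota_{J^\sharp\beta}H$ because one $DL$-entry of each inner bracket vanishes. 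Second, the intertwining characterization of Jacobi tensors you invoke is true but is not what the preliminaries actually recall (they only record that the Jacobi identity of $\{-,-\}$ is equivalent to $[J,J]=0$); you can close the argument self-containedly in one line by evaluating your identity on holonomic sections $\alpha=j^1\lambda=\D_L\lambda$ and $\beta=j^1\mu$: since $\D_L j^1\lambda=0$ and $\Lie_{\Delta_\lambda}j^1\mu=j^1(\Delta_\lambda\mu)$ with $\Delta_\lambda:=J^\sharp j^1\lambda$, it becomes $[\Delta_\lambda,\Delta_\mu]=\Delta_{\{\lambda,\mu\}}$, which applied to an arbitrary $\nu$ is precisely the Jacobi identity for $\{-,-\}$, hence $[J,J]=0$. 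Finally, note that your computation correctly produces a $2$-form $\sigma\in\Omega_L^2(M)$; the ``$\sigma\in\Omega_L^3(M)$'' in the statement is a typo.
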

\begin{proof}
The proof is an easy verification, but can also be found in \cite{2017arXiv171108310S} or \cite{GenConBun}.
\end{proof}

\begin{example}
Let $L\to M$ be a line bundle and let $\omega\in \Omega_L^2(M)$ be a contact 2-form. Then 
	\begin{align*}
	\mathcal{L}=\{ (\Delta,\I\iota_{\Delta}\omega)\in\mathbb{D}_\mathbb{C}L\ | \ \Delta\in D_\mathbb{C}L\}
	\end{align*}
is a generalized contact structure. The corresponding endomorphism $\mathbb{K}\in \Secinfty(\End\mathbb{D}L)$ is given by 
	\begin{align*}
	\mathbb{K}=
	\begin{pmatrix}
	0 & J^\sharp \\
	-\omega^\flat & 0\\
	\end{pmatrix},
	\end{align*}
where $J$ is the Jacobi tensor of $\omega$.
\end{example}

\begin{example}
Let $L\to M$ be a line bundle and let $\phi\in\Secinfty(\End DL)$ be a complex structure, i.e. an almost complex structure 
such 
that the Nijenhuis torsion with respect to the Lie algebroid bracket vanishes. If we denote by $DL^{(1,0)}$ its 
$+\I$-Eigenbundle, then
	\begin{align*}
	\mathcal{L}=DL^{(1,0)}\oplus \mathrm{Ann}(DL^{(1,0)})
	\end{align*}	   
is a generalized contact structure with corresponding endomorphism $\mathbb{K}\in \Secinfty(\End\mathbb{D}L)$ given by
	\begin{align*}
	\mathbb{K}=
	\begin{pmatrix}
	\phi & 0\\
	0 & -\phi^*\\
	\end{pmatrix},
	\end{align*}
where $\phi^*\in \End(J^1L)$ is the adjoint of $\phi$ with respect to the $L$-valued pairing of $J^1L$ and $DL$. In the following, we will refer to $\phi$ as a \emph{Gauge complex} or \emph{Atiyah complex} structure on $L$.
\end{example}
We will refer to the last two examples, i.e. contact structures and Atiyah complex structures,  as the extreme cases of generalized contact structures.

We will not discuss the automorphisms of $\mathbb{D}L$ in detail. A conceptual discussion can be found in  
\cite{2017arXiv171108310S}. Nevertheless, we will need a kind of action of Atiyah 2-forms on $H$-generalized contact structures: 
\begin{lemma}\label{Lem: TransGenCon}
Let $L\to M$ be a line bundle, let $H\in\Omega_L^3(M)$ be closed and let $\mathcal{L}$ be a 
$H$-generalized contact structure. For a real $B\in\Omega_L^2(M)$, the subbundle 
	\begin{align*}
	\mathcal{L}^B=\{(\Delta,\psi+\iota_\Delta B)\in \mathbb{D}L\ | \  (\Delta,\psi)\in \mathcal{L} \}
	\end{align*}
is a $(H+\D_L B)$-generalized contact structure.
If $B$ is closed we will refer to it as a \emph{B-field}. 
\end{lemma}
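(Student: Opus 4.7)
The plan is to verify, for $\mathcal{L}^B$, the three defining properties of an $(H+\D_L B)$-generalized contact structure: maximal isotropy with respect to $\bla -,-\bra$, transversality to its complex conjugate, and involutivity under the twisted bracket $[\![-,-]\!]_{H+\D_L B}$. The natural tool is the fiberwise linear map $\tau_B\colon \mathbb{D}L\to\mathbb{D}L$ defined by $\tau_B(\Delta,\psi)=(\Delta,\psi+\iota_\Delta B)$. Since $\tau_B$ is a vector bundle isomorphism, $\mathcal{L}^B=\tau_B(\mathcal{L})$ automatically has the same rank as $\mathcal{L}$, so maximality of isotropy will reduce to isotropy.

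For isotropy, I would compute $\bla\tau_B(A_1),\tau_B(A_2)\bra$ directly. The extra terms are $(\iota_{\Delta_1}B)(\Delta_2)+(\iota_{\Delta_2}B)(\Delta_1)=B(\Delta_1,\Delta_2)+B(\Delta_2,\Delta_1)=0$ by antisymmetry of $B$; hence $\tau_B$ preserves the pairing and $\mathcal{L}^B$ is maximally isotropic. For transversality, the key observation is that since $B$ is real, $\tau_B$ commutes with complex conjugation, so $\overline{\mathcal{L}^B}=(\overline{\mathcal{L}})^B$. If $(\Delta,\chi)\in\mathcal{L}^B\cap\overline{\mathcal{L}^B}$, then $\chi=\psi_1+\iota_\Delta B=\psi_2+\iota_\Delta B$ for some $(\Delta,\psi_1)\in\mathcal{L}$ and $(\Delta,\psi_2)\in\overline{\mathcal{L}}$, forcing $\psi_1=\psi_2\in\mathcal{L}\cap\overline{\mathcal{L}}=\{0\}$.

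The main step, though still routine, is involutivity. I would compute
\[
[\![\tau_B(A_1),\tau_B(A_2)]\!]_{H+\D_L B}
\quad\text{for}\quad A_i=(\Delta_i,\psi_i)\in\Secinfty(\mathcal{L}),
\]
and show it equals $\tau_B\bigl([\![A_1,A_2]\!]_H\bigr)$; involutivity then follows from that of $\mathcal{L}$. The $DL$-component is trivially $[\Delta_1,\Delta_2]$ in both brackets, so everything reduces to verifying that the extra $J^1L$-component produced by $B$ equals $\iota_{[\Delta_1,\Delta_2]}B$. Expanding, the $B$-contributions to the second component are
\[
\Lie_{\Delta_1}\iota_{\Delta_2}B-\iota_{\Delta_2}\D_L\iota_{\Delta_1}B+\iota_{\Delta_1}\iota_{\Delta_2}\D_L B.
\]
The expected simplification is to apply the Cartan calculus valid in the $Der$-complex, namely $\Lie_\Delta=\iota_\Delta\D_L+\D_L\iota_\Delta$ and $[\Lie_{\Delta_1},\iota_{\Delta_2}]=\iota_{[\Delta_1,\Delta_2]}$; combining them causes the $\D_L B$ and $\D_L\iota B$ terms to cancel pairwise, leaving exactly $\iota_{[\Delta_1,\Delta_2]}B$.

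The hard part will be keeping the sign bookkeeping straight in this last calculation; there is a genuine risk of mistakenly introducing an extra factor or failing to use antisymmetry of $\iota_{\Delta_1}\iota_{\Delta_2}$. Once this identity is established, the bracket on the nose lies in $\tau_B(\mathcal{L})=\mathcal{L}^B$, which concludes the proof. Note that $\D_L(H+\D_L B)=\D_L H=0$, so $H+\D_L B$ is a legitimate twisting form for the Dorfman bracket and the statement is well-posed.
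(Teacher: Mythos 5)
Your proposal is correct and complete. The paper itself states Lemma \ref{Lem: TransGenCon} without proof (deferring to the literature), and your argument is exactly the standard gauge-transformation computation one would supply: isotropy and transversality follow from antisymmetry and reality of $B$ together with the fact that $\tau_B$ is a bundle isomorphism commuting with conjugation, and the involutivity step works because the Cartan calculus holds verbatim in the $Der$-complex, giving $\Lie_{\Delta_1}\iota_{\Delta_2}B=\iota_{[\Delta_1,\Delta_2]}B+\iota_{\Delta_2}\D_L\iota_{\Delta_1}B+\iota_{\Delta_2}\iota_{\Delta_1}\D_L B$, so that with $\iota_{\Delta_2}\iota_{\Delta_1}=-\iota_{\Delta_1}\iota_{\Delta_2}$ the extra terms cancel and $[\![\tau_B A_1,\tau_B A_2]\!]_{H+\D_L B}=\tau_B\bigl([\![A_1,A_2]\!]_H\bigr)$ as you claim; the sign bookkeeping you worried about does come out right.
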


\begin{remark}\label{Rem:H-gencon}
Let $L\to M$ be a line bundle, let $H\in\Omega_L^3(M)$ be closed and let $\mathcal{L}$ be a 
$H$-generalized contact structure. Since the $Der$-complex is acyclic with contracting homotopy 
$\iota_\mathbbm{1}$, we have that $\mathcal{L}^{-\iota_{\mathbbm{1}}H}$ is a generalized contact structure by the previous Lemma.  
\end{remark}

\section{Tranversally Complex Jacobi Structures and Generalized Contact Bundles}\label{Sec: WeakJac}
Unlike in Poisson geometry, a Jacobi structure $J$ may also have odd dimensional characteristic leaves. This comes from the fact that the
leaves are the integral manifolds of the singular distribution $\image(\sigma\circ J^\sharp)$, where the image of the 
Jacobi tensor $J$ is surely an even dimensional subbundle of $DL$, but composed with symbol we might lose one dimension. It 
seems therefore reasonable to distinguish between regular Jacobi structures, i.e. Jacobi structure with a regular 
distribution, and    

\begin{definition}
Let $L\to M$ be a line bundle and let $J\in \Secinfty(\Anti^2 (J^1L)^*\otimes L)$ be a Jacobi tensor. J is said to be 
weakly regular, if $\image(J^\sharp)\subseteq DL$ is a regular subbundle. 
\end{definition}

\begin{remark}
A Jacobi structure which is weakly regular is not always regular. To illustrate this, we take for example the canonical Jacobi structure 
$(\Lambda_{can}, E_{can})\in \Secinfty(\Anti^2T\mathbb{R}^{2k+1}\oplus T\mathbb{R}^{2k+1})$ coming from the contact 
structure and consider $Z\in\Secinfty(T\mathbb{R})$ given by $Z=x\frac{\partial}{\partial x}$. Then 
$(\Lambda=\Lambda_{can}+E_{can}\wedge Z,E_{can})$ defines a weakly regular Jacobi structure on
$\mathbb{R}^{2k+1}\times \mathbb{R}$ where the set of contact points are $\{ (x,0)\in \mathbb{R}^{2k+1}\times \mathbb{R}\}$. 
\end{remark}

\begin{remark}
Let $L\to M$ be a line bundle and let $J\in \Secinfty(\Anti^2 (J^1 L)^*\otimes L)$ be weakly regular, then by definition 
$\image(J^\sharp)\subseteq DL$ 
is a regular subbundle. Moreover, one can prove, that it is in fact a subalgebroid and there is a canonical form 
$\omega\in \Secinfty(\Anti^2(\image(J^\sharp))\otimes L)$ such that $\D_{\image(J^\sharp)}\omega=0$ and $\omega(J^\sharp(\alpha),J^\sharp(\beta))=\alpha(J^\sharp(\beta))$, where $\D_{\image(J^\sharp)}$ the de Rham differential with coefficients in the tautological representation $\image(J^\sharp)\to DL$. We will refer to this form as the inverse of $J$ and denote it by $J^{-1}$.  
\end{remark}

A weakly regular Jacobi structure alone is not enough to construct an almost generalized contact structure out of, more precisely we need to consider transversal information to be seen in the following 

\begin{definition}\label{Def: TransCom}
A transversally complex subbundle on $L\to M $ is a pair $(S,K)$ consisting of two involutive subbundles
$S\subseteq DL$ and $K\subseteq D_\mathbb{C}L$ , such that 
	\begin{enumerate}
	\item $K+\cc{K}=D_\mathbb{C}L$,
	\item $K\cap\cc{K}=S_\mathbb{C}$.
	\end{enumerate}	 
 
\end{definition}

\begin{remark}
The name  \emph{transversally complex involutive subbundle} comes from the fact, that the decomposition 
	\begin{align*}
	\big(\faktor{DL}{S}\big)_\mathbb{C}=\big(\faktor{K}{S_\mathbb{C}}\big)\oplus \big(\faktor{\cc{K}}{S_\mathbb{C}}\big)
	\end{align*}
defines an almost complex structure on $\faktor{DL}{S}$.  
\end{remark}
We are mainly interested in transversally complex structures with an additional Jacobi structure, let us therefore be precise in the following  

\begin{definition}
Let $L\to M$ be a line bundle. A transversally complex Jacobi structure is a pair $(J,K)$ consisting of a weakly regular 
Jacobi structure $J\in \Secinfty(\Anti^2 (J^1L)^*\otimes L)$ and an involutive subbundle $K\subset D_\mathbb{C}L$, such that 
$(\image(J^\sharp),K)$ is transversally complex subbundle.
\end{definition} 

This kind of structure appear naturally in generalized contact geometry if one assumes some regularity conditions, to be seen in the next

\begin{proposition}
Let $L\to M$ be a line bundle and let $\mathcal{L}\subseteq \mathbb{D}_\mathbb{C}L$ be a generalized contact structure, such 
that the corresponding Jacobi structure $J$ is weakly regular. Then $(J,\pr_D\mathcal{L})$ is a transversally complex Jacobi structure.
\end{proposition}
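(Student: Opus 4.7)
The plan is to verify, for the pair $(J,\pr_D\mathcal{L})$, all requirements of Definition \ref{Def: TransCom} after establishing that $\pr_D\mathcal{L}$ is in fact a subbundle. Throughout I would work with the endomorphism description $\mathbb{K}$ of $\mathcal{L}$ from Proposition \ref{Prop: GenConEnd}, written in matrix form
\[
\mathbb{K}=\begin{pmatrix} \phi & J^\sharp \\ \sigma^\flat & -\phi^*\end{pmatrix},
\]
and systematically exploit the identities coming from $\mathbb{K}^2=-\id$, most importantly $\phi J^\sharp = J^\sharp \phi^*$ and $\phi^2 + J^\sharp \sigma^\flat = -\id$.

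The first task is to show $\pr_D\mathcal{L}\subseteq D_\mathbb{C}L$ has constant rank. The identity $\phi J^\sharp = J^\sharp \phi^*$ ensures that $\phi$ preserves $\image(J^\sharp)$, while $\phi^2 + J^\sharp \sigma^\flat = -\id$ shows that $\phi$ descends to an endomorphism squaring to $-\id$ on $DL/\image(J^\sharp)$. By weak regularity of $J$ this quotient is a genuine vector bundle, and a direct inspection of the eigenvalue equation for $\mathbb{K}$ identifies $\pr_D\mathcal{L}/(\image J^\sharp)_\mathbb{C}$ with the $+\I$-eigenbundle of the induced complex structure on the complexified quotient; constancy of $\rank(\pr_D\mathcal{L})$ follows. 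Involutivity of $\pr_D\mathcal{L}$ is then immediate from involutivity of $\mathcal{L}$ under $[\![-,-]\!]_H$, because the $DL$-component of the Dorfman bracket of two sections of $\mathbb{D}L$ is, by construction, the commutator of their $DL$-components.

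The condition $\pr_D\mathcal{L}+\overline{\pr_D\mathcal{L}}=D_\mathbb{C}L$ is obtained by applying $\pr_D$ to the decomposition $\mathcal{L}\oplus\overline{\mathcal{L}}=\mathbb{D}_\mathbb{C}L$, itself a consequence of maximal isotropy of $\mathcal{L}$ together with $\mathcal{L}\cap\overline{\mathcal{L}}=0$. The heart of the argument is the identity $\pr_D\mathcal{L}\cap\overline{\pr_D\mathcal{L}}=(\image J^\sharp)_\mathbb{C}$. For the inclusion $\subseteq$, if $(\Delta,\psi)\in\mathcal{L}$ and $(\Delta,\psi')\in\overline{\mathcal{L}}$, the $DL$-components of the two eigenvalue equations read
\[
\phi\Delta+J^\sharp\psi=\I\Delta,\qquad \phi\Delta+J^\sharp\psi'=-\I\Delta,
\]
whose difference forces $\Delta=-\tfrac{\I}{2}J^\sharp(\psi-\psi')\in(\image J^\sharp)_\mathbb{C}$. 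For the reverse inclusion, given $\psi\in J^1_\mathbb{C}L$, I would apply the projector $\tfrac12(\id-\I\mathbb{K})$ onto the $+\I$-eigenbundle $\mathcal{L}$ to the section $(0,\psi)$ and read off from the top entry that a nonzero scalar multiple of $J^\sharp\psi$ lies in $\pr_D\mathcal{L}$; the parallel computation using the complementary projector places $J^\sharp\psi$ in $\overline{\pr_D\mathcal{L}}$ as well.

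The main technical hurdle I foresee is the constant-rank step, which requires assembling the algebraic consequences of $\mathbb{K}^2=-\id$ carefully enough to identify $\pr_D\mathcal{L}/(\image J^\sharp)_\mathbb{C}$ with a half-dimensional eigenspace of a genuine complex structure on the transverse quotient. Once this algebraic reduction is in place, the remaining conditions are formal manipulations with $\mathbb{K}$ and $\pr_D$.
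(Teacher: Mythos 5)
Your proof is correct, but it takes a genuinely more self-contained route than the paper. The paper's own proof is two lines: it sets $K=\pr_D\mathcal{L}$ and cites \cite{2017arXiv171108310S} for the two key facts, namely $\image(J^\sharp)_\mathbb{C}=\pr_D\mathcal{L}\cap\pr_D\cc{\mathcal{L}}$ and the involutivity of $\pr_D\mathcal{L}$, leaving the constant-rank statement and $K+\cc{K}=D_\mathbb{C}L$ implicit. You instead derive all the requirements of Definition \ref{Def: TransCom} directly from the endomorphism picture, using the consequences of $\mathbb{K}^2=-\id$ (the identities $\phi J^\sharp=J^\sharp\phi^*$ and $\phi^2+J^\sharp\sigma^\flat=-\id$), the eigenvalue equations, and the projector $\frac{1}{2}(\id-\I\mathbb{K})$; this buys a proof independent of the external reference and makes transparent where weak regularity enters, namely in turning $DL/\image(J^\sharp)$ into a bundle carrying a transverse complex structure, from which the constancy of $\rank\pr_D\mathcal{L}$ follows. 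Two assertions are worth writing out if you expand this: the claim that $\pr_D\mathcal{L}$ surjects onto the full $+\I$-eigenbundle of the induced transverse complex structure is not pure ``inspection'' --- it follows either by applying $\frac{1}{2}(\id-\I\mathbb{K})$ to $(\Delta,0)$, or more economically from the two facts you do prove, since the images of $\pr_D\mathcal{L}$ and $\cc{\pr_D\mathcal{L}}$ in the quotient lie in complementary eigenbundles yet together span it; and for involutivity one should note that, once constant rank is established, local sections of $\pr_D\mathcal{L}$ lift to local sections of $\mathcal{L}$, so the commutator statement applies, while the involutivity of $\image(J^\sharp)$ itself (also required by the definition) is automatic from $K\cap\cc{K}=\image(J^\sharp)_\mathbb{C}$ together with involutivity of $K$ and $\cc{K}$, or from the paper's remark that $\image(J^\sharp)$ is a subalgebroid. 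Neither point is a gap, just a line each to add.
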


	\begin{proof}
	Let us define $K:=\pr_D(\mathcal{L})$.
	Having in mind that $\image(J^\sharp)_\mathbb{C} =\pr_D(\mathcal{L})\cap\pr_D(\cc{\mathcal{L}})$ and that $	
	\pr_D(\mathcal{L})$ is involutive
	(\cite{2017arXiv171108310S}), we get the result. 
	  
	\end{proof}

With the previous proposition in mind, it is natural to ask which transversally complex Jacobi structure can be induced by a generalized contact structure. To formalize the term "\emph{induced by}", we use the proof of the previous Lemma.

\begin{definition}
Let $L\to M$ be a line bundle, let $\mathcal{L}\subseteq\mathbb{D}_\mathbb{C} L$ be a a generalized contact structure, and 
$(J,K)$ be a transversally complex Jacobi structure. We say $\mathcal{L} $
induces $(J,K)$, if J is the Jacobi structure of $\mathcal{L}$ and $K=\pr_D\mathcal{L}$.
\end{definition}

Let us give a first characterization of a transversally complex Jacobi structure induced by a generalized contact structure.

\begin{lemma}
Let $L\to M$ be a line bundle and let $(J,K)$ be 
transversally complex Jacobi structure induced by the generalized contact structure 
$\mathcal{L}\subseteq\mathbb{D}_\mathbb{C}L$. Then 
\begin{enumerate}
 \item for any real extension $\omega$ of the inverse of $J$ there exists a real $B\in \Omega_L^2(M)$, such that 
 	\begin{align*}
 	\D_L(\I\omega+B)(\Delta_1,\Delta_2,\Delta_3)=0\ \forall\Delta_i\in K.
 	\end{align*}
 \item $\mathcal{L}=(K\oplus\mathrm{Ann}(K))^{\I\omega+B}$
\end{enumerate}
\end{lemma}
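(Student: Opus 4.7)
My strategy is to realise $\mathcal L$ as the graph over $K$ of an antisymmetric tensor, reduce (2) to an algebraic extension problem for a real $B$, and finally derive (1) from involutivity of $\mathcal L$ via the standard transformation rule for the twisted Dorfman bracket under B-field shifts. Since $\mathcal L$ is maximally isotropic with $\pr_D\mathcal L=K$, the fibrewise identity $\mathcal L\cap J^1_\mathbb{C}L=\mathrm{Ann}(K)$ holds: the inclusion ``$\subseteq$'' follows from the isotropy relation $\bla(0,\psi),(\Delta,\psi')\bra=\psi(\Delta)=0$ for $\Delta\in K$, while the reverse inclusion follows from maximality. Consequently $\mathcal L$ is the graph over $K$ of a well-defined linear map $\tau\colon K\to J^1_\mathbb{C}L/\mathrm{Ann}(K)\cong K^\ast\otimes L_\mathbb{C}$, which isotropy forces to be antisymmetric, so $\tau\in\Secinfty(\Anti^2 K^\ast\otimes L_\mathbb{C})$. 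Since $(K\oplus\mathrm{Ann}(K))^{\I\omega+B}$ is, by definition of the B-transform, the graph over $K$ of $(\I\omega+B)|_K$, assertion (2) is equivalent to the existence of a real $B\in\Omega_L^2(M)$ satisfying $(\I\omega+B)|_K=\tau$.

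Such a real $B$ exists if and only if $(\tau-\I\omega)|_{S_\mathbb{C}}$ agrees with its complex conjugate on $K\cap\cc K=S_\mathbb{C}$, where $S:=\image(J^\sharp)$, equivalently $(\tau-\cc\tau)|_{S_\mathbb{C}}=2\I\omega|_{S_\mathbb{C}}$. To check this identity I invoke the endomorphism picture: writing $\mathbb{K}=\begin{pmatrix}\phi & J^\sharp\\ \sigma^\flat & -\phi^\ast\end{pmatrix}$, the projector $P_+=\frac{1}{2}(\id-\I\mathbb{K})$ applied to $(0,\alpha)$, after rescaling by $\I$, produces $(J^\sharp\alpha,\I\alpha-\phi^\ast\alpha)\in\mathcal L$ for every $\alpha\in J^1_\mathbb{C}L$. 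Hence for $X=J^\sharp\alpha\in S_\mathbb{C}$ and $Y\in K$ one reads off
\begin{align*}
\tau(X,Y)=\I\alpha(Y)-\alpha(\phi Y),
\end{align*}
which is well-defined modulo $\ker J^\sharp$ thanks to $\phi\circ J^\sharp=J^\sharp\circ\phi^\ast$ (extracted from $\mathbb{K}^2=-\id$) combined with the eigenbundle description $K=\{Y\in D_\mathbb{C}L\mid(\phi-\I)Y\in S_\mathbb{C}\}$. Specialising to $Y\in S_\mathbb{C}$ and using the identity $\omega(J^\sharp\alpha,Y)=\alpha(Y)$ characterising the inverse of $J$ one finds $\tau(X,Y)-\I\omega(X,Y)=-\alpha(\phi Y)$, manifestly real for real $X,Y,\alpha$. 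The required compatibility therefore holds, and a real $B$ can then be produced algebraically: pick any real complement $W$ of $S$ in $DL$, use the transverse complex structure on $DL/S$ to split $W_\mathbb{C}=V\oplus\cc V$ with $K=S_\mathbb{C}\oplus V$, and define $B$ block-by-block on $DL=S\oplus W$ so that $B_\mathbb{C}|_K=\tau-\I\omega|_K$. This establishes (2).

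For (1), I first observe that $K\oplus\mathrm{Ann}(K)$ is involutive under the untwisted Dorfman-like bracket: the $DL$-component $[\Delta_1,\Delta_2]$ lies in $K$ by involutivity of $K$, while a direct Cartan-calculus check yields $\Lie_{\Delta_1}\alpha_2-\iota_{\Delta_2}\D_L\alpha_1\in\mathrm{Ann}(K)$ whenever $\alpha_i\in\mathrm{Ann}(K)$ and $\Delta_i\in K$. A standard computation then gives, for any Atiyah $2$-form $C$, the transformation identity
\begin{align*}
[\![e^C u_1,e^C u_2]\!]_H=e^C[\![u_1,u_2]\!]_{H-\D_L C}\;,
\end{align*}
where $e^C(\Delta,\psi):=(\Delta,\psi+\iota_\Delta C)$. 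Applied with $H=0$ and $C=\I\omega+B$, this translates the involutivity of $\mathcal L=e^{\I\omega+B}(K\oplus\mathrm{Ann}(K))$ into the involutivity of $K\oplus\mathrm{Ann}(K)$ with respect to $[\![-,-]\!]_{-\D_L(\I\omega+B)}$, which by the same direct check as above reduces to the vanishing of $\D_L(\I\omega+B)(\Delta_1,\Delta_2,\Delta_3)$ for all $\Delta_i\in K$; this is (1). The main obstacle I foresee is the consistency check on $S_\mathbb{C}$ and the well-definedness of the formula for $\tau$ modulo $\ker J^\sharp$, both of which ultimately rest on the single identity $\phi J^\sharp=J^\sharp\phi^\ast$ extracted from $\mathbb{K}^2=-\id$.
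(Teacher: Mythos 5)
Your proof is correct and follows essentially the same route as the paper's: realize $\mathcal{L}$ as the transform of $K\oplus\mathrm{Ann}(K)$ by a complex $2$-form on $K$ whose imaginary part restricts to $J^{-1}$ on $\image(J^\sharp)$, split that form into $\I\omega+B$ with $B$ real, and deduce the closure condition in statement \textit{i.)} from the involutivity of $\mathcal{L}$. The only real difference is self-containedness: where the paper quotes \cite[Section 2.2.3]{2017arXiv171108310S} for the representation $\tilde\varepsilon$ of $\mathcal{L}$ and the identity $\mathrm{Im}(\tilde\varepsilon)|_{\Anti^2 S}=(J|_S)^{-1}$, and leaves the integrability step implicit, you derive these ingredients directly (maximal isotropy gives the graph description over $K$, the endomorphism $\mathbb{K}$ gives the reality check on $S_\mathbb{C}$ needed to extract a real $B$, and the explicit $B$-field transformation rule for the Dorfman bracket reduces involutivity to statement \textit{i.)}), and you handle the quantifier ``for any real extension $\omega$'' uniformly, a point the paper's choice of the particular extension $\mathrm{Im}(\varepsilon)$ passes over in silence.
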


\begin{proof}
Every generalized contact structure $\mathcal{L}$ can be represented by a two form $\tilde\varepsilon \colon \Anti^2\pr_D
\mathcal{L}\to L_\mathbb{C}$ by
	\begin{align*}
	\mathcal{L}=\{(\Delta,\alpha)\in\mathbb{D}_\mathbb{C}L\ | \ \alpha\at{\pr_D\mathcal{L}}=
	\iota_\Delta\tilde\varepsilon\at{\pr_D\mathcal{L}}\},
	\end{align*}
such that $\mathrm{Im}(\tilde\varepsilon)\at{\Anti^2 S}= (J\at{S})^{-1}$ for the Jacobi structure $J$ of 
the generalized contact 
structure, so in our case with the given weakly regular one. 
The proof of this can be found in \cite[Section 2.2.3]{2017arXiv171108310S}.
If the generalized contact structure induces the given transversally complex Jacobi structure, the we have $\pr_D(\mathcal{L})=K$.
Since $K$ is regular, we extend $\tilde\varepsilon$ to a (complex valued) 2-form $\varepsilon$ and get 
	\begin{align*}
	\mathcal{L}&=(\pr_D(\mathcal{L})\oplus\mathrm{Ann}(\pr_D(\mathcal{L})))^\varepsilon\\&
	=(K\oplus\mathrm{Ann}(K))^{\mathrm{Re}(\varepsilon)+\I\mathrm{Im}(\varepsilon)},
	\end{align*}
which is the first statement, since $\mathrm{Im}(\varepsilon)$ extends the inverse of $J$. The second statement follows 
directly from the integrability of $\mathcal{L}$. 
\end{proof}

\begin{remark}\label{Rem: Equivalence H-genCon}
It is easy to see, that if a generalized contact structure induces a given transversally complex Jacobi structure,
then every $B$-
field transform of it will induce the same structure. So in view of Remark \ref{Rem:H-gencon}, we can even say that a 
transversally 
complex Jacobi structure is induced by a $H$-generalized contact structure, if and 
only if it is induced by a generalized contact structure. Note that this is not the case in generalized complex geometry,
since the third de Rham cohomology need not to be zero, while the Der-complex is always acyclic.
\end{remark}

As an endpoint of this section we collect all the previous results in the following
\begin{corollary}
Let $L\to M$ be a line bundle and let $(J,K)$ be a transversally complex Jacobi structure. These data come from a
($H$-)generalized 
contact structure, if and only if there exists a real extension $\omega$ of the inverse of $J$ and a real $B\in \Omega_L^2(M)
$, such that
	\begin{align*}
	\D_L(\I\omega+B)(\Delta_1,\Delta_2,\Delta_3)=0\ \forall\Delta_i\in K.
	\end{align*}	 
\end{corollary}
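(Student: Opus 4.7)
The plan is to split the proof into the two implications. The ``only if'' direction is essentially the statement of the preceding lemma, so I would just cite that (noting that the cocycle condition $\D_L(\I\omega+B)(\Delta_1,\Delta_2,\Delta_3)=0$ on $K$ is exactly the projection onto $K$ of the involutivity condition for the Dorfman bracket on $\mathcal{L}$). The substance of the proof is the converse.

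For the ``if'' direction, given $\omega$ and $B$ satisfying the hypothesis, I would set $\varepsilon:=\I\omega+B$ and define
\begin{align*}
\mathcal{L}:=(K\oplus\mathrm{Ann}(K))^{\varepsilon}=\{(\Delta,\alpha+\iota_\Delta\varepsilon)\mid \Delta\in K,\ \alpha\in\mathrm{Ann}(K)\}\subseteq\mathbb{D}_{\mathbb{C}}L,
\end{align*}
and then verify in order: \textit{(i)} $\mathcal{L}$ is maximally isotropic for $\bla-,-\bra$; \textit{(ii)} $\mathcal{L}\cap\cc{\mathcal{L}}=\{0\}$; \textit{(iii)} $\mathcal{L}$ is involutive under the untwisted Dorfman bracket; \textit{(iv)} the resulting generalized contact structure induces exactly $(J,K)$. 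Step \textit{(i)} is a one-line computation using skew-symmetry of $\varepsilon$ and $\alpha_i\in\mathrm{Ann}(K)$. For \textit{(ii)}, I would note that $\pr_D(\mathcal{L})=K$ forces any $\Delta$ in the intersection to lie in $K\cap\cc K=\image(J^\sharp)_{\mathbb{C}}$; equating the form parts of an element and its conjugate then shows $\iota_\Delta\omega$ must annihilate both $K$ and $\cc K$, hence annihilates $\image(J^\sharp)_{\mathbb{C}}$. Since $\omega|_{\image(J^\sharp)}=J^{-1}$ is non-degenerate, this forces $\Delta=0$, and the remaining form then vanishes because $K+\cc K=D_{\mathbb{C}}L$.

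The crux is step \textit{(iii)}. Applying Cartan calculus to the untwisted Dorfman bracket of two sections of $\mathcal{L}$, I expect the form component to rearrange as
\begin{align*}
\iota_{[\Delta_1,\Delta_2]}\varepsilon+\bigl(\Lie_{\Delta_1}\alpha_2-\iota_{\Delta_2}\D_L\alpha_1\bigr)+\iota_{\Delta_2}\iota_{\Delta_1}\D_L\varepsilon.
\end{align*}
The first summand is exactly what places the result in $(K\oplus\mathrm{Ann}(K))^\varepsilon$; the middle piece lies in $\mathrm{Ann}(K)$ by involutivity of $K$ together with $\alpha_i\in\mathrm{Ann}(K)$; and the last piece lies in $\mathrm{Ann}(K)$ \emph{precisely} by our hypothesis that $(\D_L\varepsilon)(\Delta_1,\Delta_2,\Delta_3)=0$ for all $\Delta_i\in K$. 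This is the only place where the cocycle condition genuinely enters, so it is literally the obstruction the statement is about. Step \textit{(iv)} is then bookkeeping: $\pr_D\mathcal{L}=K$ by construction, and the Jacobi tensor read off from $\mathcal L$ via Proposition \ref{Prop: GenConEnd} is uniquely pinned down by $\mathrm{Im}(\varepsilon)|_{\image(J^\sharp)}=\omega|_{\image(J^\sharp)}=J^{-1}$, hence equals $J$.

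The main obstacle I anticipate is the Cartan-calculus bookkeeping in step \textit{(iii)}: one needs to organize the terms so that exactly $\iota_{\Delta_2}\iota_{\Delta_1}\D_L\varepsilon$ survives, to be matched against the hypothesis. A secondary subtlety is that Lemma \ref{Lem: TransGenCon} is stated for \emph{real} $B$, whereas $\varepsilon$ is complex, so one cannot invoke it as a black box and must redo the transform by hand. Once $\mathcal L$ is constructed as an untwisted generalized contact structure inducing $(J,K)$, Remark \ref{Rem: Equivalence H-genCon} absorbs any $H$ into a real $B$-field, so the parenthetical $H$-generalized version and the untwisted version of the statement are established simultaneously.
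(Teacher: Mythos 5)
Your proof is correct and follows essentially the paper's own route: the ``only if'' part is exactly the preceding lemma combined with Remark \ref{Rem: Equivalence H-genCon} to absorb the twist $H$, and the ``if'' part is the transform $\mathcal{L}=(K\oplus\mathrm{Ann}(K))^{\I\omega+B}$, which is precisely the structure the paper has in mind (cf.\ Theorem \ref{Thm: ObsCls}), with your steps (i)--(iv) filling in the verification the paper leaves implicit. One small imprecision in step (ii): equating the form parts only shows that $\iota_\Delta\omega$ annihilates $K\cap\cc{K}=\image(J^\sharp)_{\mathbb{C}}$, not all of $K$ and $\cc{K}$, but that is exactly what the non-degeneracy of $J^{-1}$ on $\image(J^\sharp)$ needs to force $\Delta=0$, so the argument goes through unchanged.
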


This condition seems not to be very easy to handle and also involves an extension of the inverse of the Jacobi structure and  the existence of a 2-form $B$. We will see in the following that their existence can be encoded completely in properties of $J$.  

\section{The Splitting of the Spectral Sequence of a Transversally Complex Subbundle}

We have seen that every generalized contact structure with weakly regular Jacobi structure induces a transversally complex 
Jacobi structure. The latter special case of a transversally complex subbundles. 
We want to explore these subbundles by means of a canonical spectral sequence attached to them. It turns out that a 
transversally complex Jacobi structure comes from a generalized contact bundle, if 
and only if a certain cohomology class in the first page of this spectral sequence vanishes.

Throughout this subsection we will assume the following data: a line bundle $L\to M$ and the subalgebroids 
$S\subseteq DL$ 
and $K\subseteq D_\mathbb{C}L$, such that $K+\cc{K}=D_\mathbb{C}L$ and $K\cap \cc{K}=S_\mathbb{C}$, in other words we want to fix a transversally complex subbundle $(S,K)$. Moreover, if not stated otherwise, we see every Atiyah form as complex valued.

\subsection{General Statements and Preliminaries}\label{SubSec: SpecSeq}
This part is not only meant to fix notation and give a quick reminder on spectral sequences, but also to give a splitting of the zeroth and first page of the spectral sequence induced by the transversally complex subbundle $(S,K)$. Let us begin by showing that $(S, K)$ induces two filtered complexes.

\begin{lemma}\label{Lem: filtCom}
The subspaces 
	\begin{align*}
	&F_n^m:=\{ \alpha\in \Omega_L^m(M)\ | \ \iota_X\alpha=0 \ \forall X\in \Anti^{m-n+1}S_\mathbb{C}\} \text{ and }\\&
	G_n^m:=\{ \alpha\in \Omega_L^m(M)\ | \ \iota_X\alpha=0 \ \forall X\in \Anti^{m-n+1}K\}
	\end{align*}
fulfill the following properties
	\begin{enumerate}
	\item $\Omega_L^m(M)=F_0^m\supseteq F_1^m\supseteq\dots$ and $\Omega_L^m(M)=G_0^m\supseteq G_1^m\supseteq\dots$
	\item $\D_L(F_n^m)\subseteq F_n^{m+1}$ and $\D_L(G_n^m)\subseteq G_n^{m+1}$
	\end{enumerate}
Moreover, we have the following relations between them:
	\begin{enumerate}[(I)]
	\item $G_n^m\subseteq F_n^m$
	\item $G_j^m\cap \cc{G}_i^m \subseteq F_{i+j}^m$
	\item $\langle G_j^m\cap \cc{G}_i^m\rangle_{i+j=n}=F_n^m$
	\item $G_j^m\cap \cc{G}_i^m\cap(\langle G_l^m\cap \cc{G}_k^m\rangle_{i+j=n,(i,j)\neq(k,l)}+F_{i+j+1}^m)
	\subseteq F_{i+j+1}^m$
	\item $F_{i+j+n}^m\cap G_j^m\cap \cc{G}_i^m=\langle G_{j+k}^m\cap \cc{G}_{i+l}^m\rangle_{k+l=n}$
	\end{enumerate}
\end{lemma}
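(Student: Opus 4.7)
The plan is to fix an auxiliary splitting and reduce all the assertions to bookkeeping on a tri-grading of $\Omega_L^\bullet(M)_{\mathbb{C}}$. First I would pick a complementary subbundle $K_0 \subset K$ to $S_{\mathbb{C}}$, so that $K = S_{\mathbb{C}} \oplus K_0$ and hence $D_{\mathbb{C}}L = S_{\mathbb{C}} \oplus K_0 \oplus \cc{K}_0$. Dualising this gives a splitting of $(D_{\mathbb{C}}L)^*$, and taking exterior powers produces the tri-grading
\[
\Omega_L^m(M)_{\mathbb{C}} = \bigoplus_{a+b+c=m} \Omega_L^{a,b,c}(M),
\]
where $a$, $b$, $c$ count legs in the pieces dual to $S_{\mathbb{C}}$, $K_0$, and $\cc{K}_0$ respectively. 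This grading depends on the choice of $K_0$, but all the subspaces intervening in the lemma do not.

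Translating the two filtrations into this grading is the main content of the bookkeeping. Contraction with $X \in \Anti^{m-n+1}S_{\mathbb{C}}$ only acts on the $S_{\mathbb{C}}^*$-legs, so $\alpha \in F_n^m$ iff every component of $\alpha$ satisfies $a \leq m-n$, equivalently $b+c \geq n$. An identical computation using $K = S_{\mathbb{C}} \oplus K_0$ shows $\alpha \in G_n^m$ iff $c \geq n$, and by conjugation $\alpha \in \cc{G}_n^m$ iff $b \geq n$. With this translation, (1), (I), and (II) are immediate from the implications $b+c \geq n+1 \Rightarrow b+c \geq n$, $c \geq n \Rightarrow b+c \geq n$, and $b \geq i,\,c \geq j \Rightarrow b+c \geq i+j$. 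For (III) and (V), I would note that any tri-degree $(a,b,c)$ satisfying the left-hand side constraints lies in some $G_{j+k}^m \cap \cc{G}_{i+l}^m$ appearing on the right by choosing admissible values of the extra indices in the obvious range (for (V) one needs $k \leq c-j$, $l \leq b-i$, $k+l=n$, which is possible exactly because $b+c \geq i+j+n$); the opposite inclusion in (V) and the $\subseteq$ inclusion in (III) come straight from (II). For (IV), the key remark is that the image of $G_j^m \cap \cc{G}_i^m$ with $i+j=n$ in the quotient $F_n^m/F_{n+1}^m = \bigoplus_{b+c=n}\Omega_L^{a,b,c}$ is exactly the summand $\bigoplus_a \Omega_L^{a,i,j}$, so the distinct pieces with $i+j=n$ are placed in linearly independent summands and their mutual intersection vanishes modulo $F_{n+1}^m$.

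The only step going beyond linear algebra is (2), and I expect this to be the main technical point, since it is the only place that uses the hypothesis that $S$ and $K$ are subalgebroids. I would prove $\D_L(F_n^m) \subseteq F_n^{m+1}$ directly from the Cartan-style formula of the $\mathrm{Der}$-differential: for $\alpha \in F_n^m$ and $\Delta_0,\dots,\Delta_{m-n+1} \in S_{\mathbb{C}}$ supplemented by arbitrary further derivations, the derivation terms $\Delta_i(\alpha(\dots))$ vanish because the inner form is evaluated on at least $m-n+1$ arguments in $S_{\mathbb{C}}$, and the bracket terms $\alpha([\Delta_i,\Delta_j],\dots)$ vanish because $[\Delta_i,\Delta_j] \in S_{\mathbb{C}}$ by involutivity, keeping the total count of $S_{\mathbb{C}}$-arguments of $\alpha$ at least $m-n+1$. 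The identical argument, with $K$ in place of $S_{\mathbb{C}}$, handles $\D_L(G_n^m) \subseteq G_n^{m+1}$, using involutivity of $K$.
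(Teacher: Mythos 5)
Your argument is correct, and the comparison here is necessarily lopsided: the paper's own ``proof'' is a single sentence declaring the lemma an easy verification exploiting the involutivity of $S$ and $K$ and the relations $K+\cc{K}=D_\mathbb{C}L$, $K\cap\cc{K}=S_\mathbb{C}$, so it commits to no particular method. Your route---choosing a complement $K_0$ of $S_\mathbb{C}$ in $K$ and reducing (1) and (I)--(V) to index bookkeeping in the resulting tri-grading, with the translation $F_n^m\leftrightarrow\{b+c\ge n\}$, $G_n^m\leftrightarrow\{c\ge n\}$, $\cc{G}_n^m\leftrightarrow\{b\ge n\}$---is a clean way to carry the verification out, and your index arguments for (III), (IV), (V) check out (in particular the quotient picture $F_n^m/F_{n+1}^m=\bigoplus_{b+c=n}\Omega_L^{a,b,c}$ for (IV) and the admissible-range argument for (V)). What the intrinsic verification the paper presumably has in mind buys is coherence with its stated preference (in the generalized complex remark) for avoiding auxiliary transversal choices; what your approach buys is transparency, and since the filtrations are defined without reference to $K_0$ the choice is pure scaffolding, so nothing is lost. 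Two small points deserve an explicit line in a written version: the directness of $D_\mathbb{C}L=S_\mathbb{C}\oplus K_0\oplus\cc{K}_0$ does not follow from $K=S_\mathbb{C}\oplus K_0$ alone but needs the one-line argument using $K\cap\cc{K}=S_\mathbb{C}$ (and a smooth $K_0$ exists, e.g.\ as a metric complement); and in (2) the defining contraction conditions are pointwise while the Koszul formula for $\D_L$ is applied to local sections of $S_\mathbb{C}$ resp.\ $K$, which is harmless because both are subbundles and the condition is linear in $X$, so decomposable $X$ suffice. As you say, (2) is the only step that is not linear algebra, and it is exactly where involutivity enters through the bracket terms.
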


\begin{proof}
The proof is an easy verification exploiting the involutivity of $S$ and $K$ and the relations  $K+\cc{K}=D_\mathbb{C}L$ and
$K\cap \cc{K}=S_\mathbb{C}$.
\end{proof}

The properties \textit{i.)} and \textit{ii.)} in the previous Lemma show that the subspaces $F^m_n$ and $G_n^m$ induce  filtrations of the 
Der-complex. 
Note that we did not explicitly introduce the spaces $\cc{G}_i^m$, but from the notation it should be clear that we mean the complex conjuagtion of the spaces $G_i^m$ or equivalently the filtered complex with respect to $\cc{K}$.
Properties \textit{(I)-(V)} will give us a canonical splitting of the spectral sequence and its differentials. 

Let us therefore briefly recall the definition of a spectral sequence
\begin{definition}
A spectral sequence is a series of bigraded vector spaces $\{E^{\bullet,\bullet}_r\}_{r\geq 0}$, called the pages, and a series of maps 
$\{\D^r\colon E^{\bullet,\bullet}_r\to E^{\bullet+r,\bullet+1-r}_r \}_{r\geq 0}$, the differentials,  such that 
	\begin{enumerate}
	\item $(\D^r)^2=0$
	\item $E^{\bullet,\bullet}_r\cong 
	\frac{\ker(\D^{r-1}\colon E^{\bullet,\bullet}_{r-1}\to E^{\bullet+r-1,\bullet-r}_{r-1} )}
	{\image(\D^{r-1}\colon E^{\bullet-r+1,\bullet+r}_{r-1}\to E^{\bullet,\bullet}_{r-1})}$
	\end{enumerate}
\end{definition}

There is a canonical way to associate a spectral sequence to a filtered complex. We will define it for the filtered complex 
$\Omega_L^m(M)=F_0^m\supseteq F_1^m\supseteq\dots$. We consider the quotients 
	\begin{align*}
	E^{p,q}_r=\frac{\{\alpha\in F_p^{q+p}\ | \ \D_L\alpha \in F_{p+r}^{q+p+1}\}}{F_{p+1}^{q+p}+\D_L(F_{p+1-r}^{q+p-1})}
	\end{align*}
together with the maps 
	\begin{align}\label{Eq: Differential}
	\D^r\colon E^{p,q}_r\ni \alpha +F_{p+1}^{q+p}+ \D_L(F_{p+1-r})\mapsto
	\D_L\alpha+ F_{p+r+1}^{q+p+1} + \D_L(F_{p+1}^{q+p+1})\in E^{p+r,q+1-r}_r.
	\end{align}

\begin{lemma}
The maps $\{\D^r\}_{r\geq0}$ from Equation \ref{Eq: Differential} are well-defined and $\{(E^{\bullet,\bullet}_r,\D^r)\}_{r\geq 0}$ is a spectral sequence.
\end{lemma}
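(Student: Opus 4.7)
The plan is to execute the standard construction of the spectral sequence associated to a filtered complex, applied here to the decreasing filtration $\{F_n^\bullet\}_n$ of $\Omega_L^\bullet(M)$ whose relevant properties (in particular $\D_L(F_n^m)\subseteq F_n^{m+1}$) are recorded in Lemma \ref{Lem: filtCom}. The proof splits into three essentially independent pieces: well-definedness of $\D^r$ on the quotients, the identity $(\D^r)^2=0$, and the cohomology identification $E^{p,q}_{r+1}\cong H^{p,q}(E^{\bullet,\bullet}_r,\D^r)$.

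For well-definedness, given a representative $\alpha\in F_p^{q+p}$ with $\D_L\alpha\in F_{p+r}^{q+p+1}$, I would first check that $\D_L\alpha$ lies in the numerator of $E^{p+r,q+1-r}_r$: the extra condition $\D_L(\D_L\alpha)\in F_{p+2r}^{q+p+2}$ is immediate from $\D_L^2=0$. Descent to the quotient reduces to the inclusion $\D_L(F_{p+1}^{q+p})\subseteq \D_L(F_{p+1}^{q+p})$, tautologically inside the target denominator, together with $\D_L^2(F_{p+1-r}^{q+p-1})=0$. The identity $(\D^r)^2=0$ is similarly immediate from $\D_L^2=0$ applied to representatives.

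The only step of genuine content is the cohomology identification. Unwinding the quotients, a class $[\alpha]_r\in E^{p,q}_r$ lies in $\ker(\D^r)$ if and only if $\D_L\alpha=\beta+\D_L\gamma$ for some $\beta\in F_{p+r+1}^{q+p+1}$ and $\gamma\in F_{p+1}^{q+p}$; replacing $\alpha$ by $\alpha-\gamma$, which stays in $F_p^{q+p}$, produces a representative whose differential now lands in $F_{p+r+1}^{q+p+1}$. The image of $\D^r\colon E^{p-r,q+r-1}_r\to E^{p,q}_r$ is generated by classes of the form $[\D_L\delta]_r$ with $\delta\in F_{p-r}^{q+p-1}$ and $\D_L\delta\in F_p^{q+p}$. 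I would then show that the assignment $[\alpha]_r\mapsto [\alpha]_{r+1}$ induces a well-defined map $\ker(\D^r)/\image(\D^r)\to E^{p,q}_{r+1}$: it descends from the kernel by the previous observation, vanishes on the image since $\D_L(F_{p-r}^{q+p-1})$ sits in the denominator of the $(r+1)$-th page, and is surjective by construction.

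The main (though still routine) obstacle is checking injectivity. If $[\alpha]_{r+1}=0$ then $\alpha=\beta+\D_L\gamma$ with $\beta\in F_{p+1}^{q+p}$ and $\gamma\in F_{p-r}^{q+p-1}$; here $\beta$ lies in the denominator of $E^{p,q}_r$ while $\D_L\gamma$ exhibits $[\alpha]_r$ as an element of $\image(\D^r)$. No further ideas are required beyond careful bookkeeping of the numerators and denominators across successive pages.
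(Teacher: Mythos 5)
Your proposal is correct, and it is precisely the standard construction of the spectral sequence of a filtered complex that the paper itself does not spell out but simply dismisses as an easy exercise with a citation to Weibel; your write-up supplies exactly the argument the paper delegates to the literature, using the filtration properties of Lemma \ref{Lem: filtCom}. The few steps you leave implicit (for instance, in the injectivity check, that $\D_L\gamma=\alpha-\beta$ lies in $F_p^{q+p}$ so that $\gamma$ is a genuine page-$r$ representative in $E^{p-r,q+r-1}_r$, and that the two denominators are compatible because $\D_L(F^{q+p-1}_{p+1-r})\subseteq\D_L(F^{q+p-1}_{p-r})$) are routine and follow from the bookkeeping you describe.
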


\begin{proof}
The proof is a easy exercise, but can be found in every book treating spectral sequences of filtered complexes, see e.g. \cite{weibel1995introduction}.
\end{proof}
In our case, we do not have only one filtered complex, but two more filtered complexes $\Omega_L^m(M)=G_0^m\supseteq F_1^m
\supseteq\dots$ and its complex conjugate, which are in relation with $\Omega_L^m(M)=F_0^m\supseteq F_1^m\supseteq\dots$. Let us now consider the 
quotients 
	\begin{align*}
	E^{(i,j),q}_r=
	\frac{\{\alpha\in G_j^{q+i+j}\cap \cc{G}_i^{q+i+j}\ | \ \D_L\alpha \in F_{i+j+r}^{q+i+j+1}\}}
	{(F_{i+j+1}^{q+i+j}+\D_L(F_{i+j+1-r}^{q+i+j-1}))\cap  G_j^{q+i+j}\cap \cc{G}_i^{q+i+j}}
	\end{align*}

\begin{lemma}
Let $ s=0,1$,  then
the canonical maps 
	\begin{align*}
	E_s^{(i,j),q}\ni \alpha + (F_{i+j+1}^{q+i+j}+\D_L(F_{i+j+1-s}))\cap  G_j^{q+i+j}\cap \cc{G}_i^{q+i+j}\mapsto
	\alpha + F_{i+j+1}^{q+i+j}+ \D_L(F_{i+j+1-s}^{q+i+j-1}) \in E_s^{i+j,q}
	\end{align*}
are injective and moreover $E_s^{p,q}=\bigoplus_{i+j=p}E_s^{(i,j),q}$.
\end{lemma}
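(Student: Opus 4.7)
The plan is to treat the cases $s=0$ and $s=1$ separately, deducing the latter from the former by passing to cohomology. In both cases the injectivity of the canonical map is essentially a tautology: an $\alpha \in G_j^{q+i+j} \cap \cc{G}_i^{q+i+j}$ whose image in $E_s^{i+j,q}$ is zero lies in $F_{i+j+1}^{q+i+j} + \D_L(F_{i+j+1-s}^{q+i+j-1})$, and hence automatically lies in the intersection of this subspace with $G_j^{q+i+j} \cap \cc{G}_i^{q+i+j}$, which is precisely the denominator of $E_s^{(i,j),q}$.

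For $s=0$ the denominator of $E_0^{(i,j),q}$ simplifies to $F_{i+j+1}^{q+i+j} \cap G_j^{q+i+j} \cap \cc{G}_i^{q+i+j}$ because $\D_L(F_{i+j+1}^{q+i+j-1}) \subseteq F_{i+j+1}^{q+i+j}$ by property \textit{ii.)} of Lemma \ref{Lem: filtCom}. Surjectivity of $\bigoplus_{i+j=p} E_0^{(i,j),q} \to E_0^{p,q}$ then follows from property \textit{(III)}: every $\alpha \in F_p^{q+p}$ admits a decomposition $\alpha = \sum_{i+j=p}\alpha_{ij}$ with $\alpha_{ij} \in G_j \cap \cc{G}_i$. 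Directness of the sum is the content of \textit{(IV)}: if $\sum_{i+j=p}\alpha_{ij} \in F_{p+1}^{q+p}$, then fixing $(i_0,j_0)$ with $i_0+j_0=p$ yields $\alpha_{i_0 j_0} \in G_{j_0} \cap \cc{G}_{i_0} \cap \bigl(F_{p+1}^{q+p} + \sum_{(i,j)\neq(i_0,j_0)} G_j \cap \cc{G}_i\bigr)$, and \textit{(IV)} forces this into $F_{p+1}^{q+p}$, so the class of $\alpha_{i_0 j_0}$ in $E_0^{(i_0,j_0),q}$ vanishes.

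For $s=1$, the plan is to exploit that $\D_L$ preserves both filtrations $G_\bullet^\bullet$ and $\cc{G}_\bullet^\bullet$, so that the induced differential $\D^0$ on $E_0^{p,q}$ respects the finer bigrading, mapping each summand $E_0^{(i,j),q}$ into $E_0^{(i,j),q+1}$. This gives
\begin{align*}
E_1^{p,q} \;=\; H(E_0^{p,\bullet},\D^0) \;=\; \bigoplus_{i+j=p} H(E_0^{(i,j),\bullet},\D^0),
\end{align*}
and it remains only to match each summand with the space $E_1^{(i,j),q}$ of the statement. Their numerators coincide trivially, since the condition $\D^0[\alpha]=0$ is equivalent to $\D_L\alpha \in F_{i+j+1}$. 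The identification of denominators reduces to the equality
\begin{align*}
\D_L(G_j \cap \cc{G}_i) + F_{i+j+1} \cap G_j \cap \cc{G}_i \;=\; \bigl(F_{i+j+1} + \D_L(F_{i+j})\bigr) \cap G_j \cap \cc{G}_i.
\end{align*}

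The inclusion $\subseteq$ is straightforward using \textit{(II)}; the reverse inclusion is the main obstacle. Given $\gamma \in G_j \cap \cc{G}_i$ of the form $\gamma = \beta_0 + \D_L \beta$ with $\beta_0 \in F_{i+j+1}$ and $\beta \in F_{i+j}$, I would invoke \textit{(III)} to decompose $\beta = \sum_{k+l=i+j}\beta^{kl}$ with $\beta^{kl} \in G_l \cap \cc{G}_k$, isolate the $(i,j)$-piece, and note that $\gamma - \D_L\beta^{ij}$ lies in $G_j \cap \cc{G}_i \cap \bigl(F_{i+j+1} + \sum_{(k,l)\neq(i,j),\,k+l=i+j} G_l \cap \cc{G}_k\bigr)$. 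A single application of \textit{(IV)} then places this difference inside $F_{i+j+1} \cap G_j \cap \cc{G}_i$, providing the required decomposition $\gamma = \D_L\beta^{ij} + (\gamma - \D_L\beta^{ij})$ and closing the identification.
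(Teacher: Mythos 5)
Your proposal is correct, and for the $s=1$ case it takes a genuinely different route from the paper's. The paper handles $s=1$ entirely at the level of representatives: it decomposes a cocycle $\omega\in F_{i+j}^{q+i+j}$ with $\D_L\omega\in F_{i+j+1}^{q+i+j+1}$ via property \textit{(III)} of Lemma \ref{Lem: filtCom}, shows each component $\omega_{kl}$ again satisfies $\D_L\omega_{kl}\in F_{i+j+1}$ by the \textit{(IV)}-argument (this gives spanning of the first-page numerators), and then proves directness by decomposing the primitive $\alpha\in F_{i+j}$ and applying the same \textit{(IV)}-argument a second time to $\omega_{kl}+\D_L\alpha_{kl}$. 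You instead derive $s=1$ from $s=0$ by passing to cohomology: since $\D_L$ preserves $G_j\cap\cc{G}_i$ (property \textit{ii.)} of Lemma \ref{Lem: filtCom}), the differential $\D^0$ is diagonal with respect to the bigrading — this is exactly part \textit{i.)} of Proposition \ref{Prop: SplittingDiff}, whose proof does not use the present lemma, so there is no circularity — and the whole statement reduces to the single identity $\D_L(G_j\cap\cc{G}_i)+F_{i+j+1}\cap G_j\cap\cc{G}_i=(F_{i+j+1}+\D_L(F_{i+j}))\cap G_j\cap\cc{G}_i$, which you prove correctly by decomposing $\beta$ via \textit{(III)}, isolating the $(i,j)$-piece, and concluding with \textit{(IV)}. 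The combinatorial core is thus the same in both arguments, but your packaging is more conceptual: it exhibits $E_1^{(i,j),q}$ as the cohomology of the bigraded piece of the zeroth page and absorbs the compatibility of $\D^0$ with the bigrading that the paper establishes separately. The price is that you rely on the canonical identification $E_1^{p,q}\cong H(E_0^{p,\bullet},\D^0)$ and should note (a one-line chase of representatives) that the resulting isomorphism $\bigoplus_{i+j=p}E_1^{(i,j),q}\cong E_1^{p,q}$ is indeed induced by the canonical maps named in the statement, since that is what the lemma asserts; the paper's hands-on argument avoids these identifications at the cost of repeating the quotient manipulations by hand.
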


\begin{proof}
Injectivity follows by defintion of the maps. Let us start with $s=0$. We have 
\begin{align*}
E^{(i,j),q}_0=\frac{G_j^{q+i+j}\cap \cc{G}_i^{q+i+j}}{F_{i+j+1}^{q+i+j}\cap  G_j^{q+i+j}\cap \cc{G}_i^{q+i+j}},
\end{align*}
where we used  property \textit{ii.)} of Lemma \ref{Lem: filtCom}.
By \textit{(III)} of Lemma \ref{Lem: filtCom}, we obtain that 
$E_0^{p,q}=\langle E_0^{(i,j),q}\rangle_{i+j=p}$. Let now $\omega_{ij}\in G_j^{q+i+j}\cap \cc{G}_i^{q+i+j}$ for $i+j=p$, such 
that 
	\begin{align*}
	\sum_{i+j=p}\omega_{ij}\in F^{q+i+j}_{i+j+1}.
	\end{align*}
We have that
\begin{align*}
\omega_{kl}\in G_l^{q+i+j}\cap \cc{G}_k^{q+i+j}\cap(\langle G_j^{q+i+j}\cap \cc{G}_i^{q+i+j}\rangle_{k+l=n,
(i,j)\neq(k,l)}+F_{i+j+1}^{q+i+j})\subseteq F_{i+j+1}^{q+i+j}
\end{align*}
 for every choice of $k+l=i+j$ and hence, using \textit{(IV)} 
of Lemma 
\ref{Lem: filtCom}, 
$\omega_{kl}\in F_{i+j+1}^{q+i+j}$. If we pass to the  quotients, we get the result for $s=0$. Let us pass to 
$s=1$ and let $\omega\in F_{i+j}^{q+i+j}$ such that $\D_L\omega\in  F_{i+j+1}^{q+i+j+1}$. Since  
$F_{i+j}^{q+i+j}=\langle \langle G_l^{q+i+j}\cap \cc{G}_k^{q+i+j}\rangle_{k+l=i+j}$, we can find 
$\omega_{kl}\in G_l^{q+i+j}\cap \cc{G}_k^{q+i+j}$ for $k+l=i+j$, such that 
	\begin{align*}
	\omega=\sum_{k+l=i+j} \omega_{kl}.
	\end{align*}
We have that $\D_L\omega_{kl}\in G_l^{q+i+j+1}\cap \cc{G}_k^{q+i+j+1}$ similarly as in  case $s=0$, we can prove that 
actually  $\D_L\omega_{kl}\in F_{i+j+1}^{q+i+j+1}$, using $\D_L\omega\in F_{i+j+1}^{q+i+j+1}$. Thus
\begin{align*}
\omega\in \langle
\{\alpha\in G_k^{q+i+j}\cap \cc{G}_l^{q+i+j}\ | \ \D_L\alpha \in F_{i+j+r}^{q+i+j+1}\}\rangle_{k+l=i+j}
\end{align*}
 and hence $E_1^{i
+j,q}=\langle E_1^{(k,l),q}\rangle_{k+l=i+j}$.  Let now $\omega_{kl}\in G_l^{q+i+j}\cap \cc{G}_k^{q+i+j}$ for $k+l=i+j$, 
such that $\D_L\omega_{kl}\in F_{i+j+1}^{q+i+j}$ and $\sum_{k+l=i+j}\omega_{kl}\in 
 F_{i+j+1}^{q+i+j} +\D_L(F_{i+j}^{q+i+j-1})$. Therefore there exists $\alpha\in F_{i+j}^{q+i+j-1} $, such that 
$\sum_{k+l=i+j}\omega_{kl} +\D_L\alpha \in F_{i+j+1}^{q+i+j}$. Splitting $\alpha=\sum_{k+l=i+j}\alpha_{kl}$ for some 
$\alpha_{kl}\in G_k^{q+i+j-1}\cap \cc{G}_l^{q+i+j-1}$ , we get that 
	\begin{align*}
	\sum_{k+l=i+j}\omega_{kl}+\D_L\alpha_{kl} \in F_{i+j+1}^{q+i+j}. 
	\end{align*}
Additionally we have $\omega_{kl}+\D_L\alpha_{kl}\in G_l^{q+i+j}\cap \cc{G}_k^{q+i+j} $. Applying the same argument as in the 
case $s=0$, we get $\omega_{kl}+\D_L\alpha_{kl}\in F_{i+j+1}^{q+i+j}$ for all $k+l=i+j$. Passing to the quotient, we get 
the result for $s=1$.
\end{proof}
We consider the differentials on the zeroth and first page and use this splitting to decompose them.

\begin{proposition}\label{Prop: SplittingDiff}
For the differentials $\D^0\colon E_0^{p,q}\to E^{p,q+1}_0$ and $\D^1\colon E^{p,q}_0\to E^{p+1,q}_0$, the following hold
	\begin{enumerate}
	\item $\D^0(E^{(i,j),q}_0)\subseteq E^{(i,j),q+1}_0$
	\item $\D^1(E^{(i,j),q}_1)\subseteq E^{(i+1,j),q}_1\oplus E^{(i,j+1),q}_1$
	\end{enumerate}
Hence there is a canonical splitting $\D^1=\partial^1+\cc{\partial}^1$, where 
$\partial^1(E_{(i,j),q})\subseteq E_{(i+1,j),q}$ and 
$\cc{\partial}^1(E_{(i,j),q})\subseteq E_{(i,j+1),q}$. Finally, $(\partial^1)^2=(\cc{\partial}^1)^2=
\partial^1\cc{\partial}^1+\cc{\partial}^1\partial^1=0$.
\end{proposition}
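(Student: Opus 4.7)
The plan is to prove the three assertions in order, using the structural properties \textit{(I)--(V)} of Lemma \ref{Lem: filtCom} and the fact that $\D_L$ respects each of the filtrations $F$, $G$, and $\bar G$ individually.

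For assertion \textit{i.)}, the argument is essentially a direct unpacking of definitions. A class in $E_0^{(i,j),q}$ is represented by $\alpha\in G_j^{q+i+j}\cap\cc{G}_i^{q+i+j}$. Since $\D_L$ preserves both filtrations $G$ and $\cc{G}$ (property \textit{ii.)}), we immediately get $\D_L\alpha\in G_j^{q+i+j+1}\cap\cc{G}_i^{q+i+j+1}$, which is exactly what is required to represent a class in $E_0^{(i,j),q+1}$. The only thing left is a well-definedness check on the coboundary: if $\alpha\in F^{q+i+j}_{i+j+1}\cap G_j^{q+i+j}\cap\cc{G}_i^{q+i+j}$, then $\D_L\alpha$ lies in the corresponding intersection with $F^{q+i+j+1}_{i+j+1}$, so it vanishes in the quotient defining $E_0^{(i,j),q+1}$.

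For assertion \textit{ii.)}, the core step is to decompose $\D_L\alpha$ using property \textit{(V)} with $n=1$. Given $\alpha\in G_j^{q+i+j}\cap\cc{G}_i^{q+i+j}$ representing a class in $E_1^{(i,j),q}$, we have $\D_L\alpha\in G_j^{q+i+j+1}\cap\cc{G}_i^{q+i+j+1}\cap F_{i+j+1}^{q+i+j+1}$, and \textit{(V)} then writes this intersection as the sum of the two pieces of bidegree $(i+1,j)$ and $(i,j+1)$. This yields a decomposition $\D_L\alpha=\beta+\gamma$ with $\beta\in G_j^{q+i+j+1}\cap\cc{G}_{i+1}^{q+i+j+1}$ and $\gamma\in G_{j+1}^{q+i+j+1}\cap\cc{G}_i^{q+i+j+1}$. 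To confirm that $\beta$ and $\gamma$ individually define classes in $E_1^{(i+1,j),q}$ and $E_1^{(i,j+1),q}$, one has to check that $\D_L\beta$ and $\D_L\gamma$ lie in $F_{i+j+2}^{q+i+j+2}$. The key trick here is that $\D_L\beta=-\D_L\gamma$, so this single element sits in two different bidegree pieces of the same total weight $i+j+1$; property \textit{(IV)} (applied modulo $F_{i+j+2}$) then forces it into $F_{i+j+2}^{q+i+j+2}$. This is the main technical point of the proposition, and mirrors exactly the step used in the preceding lemma to establish the direct-sum decomposition $E_1^{p,q}=\bigoplus_{i+j=p}E_1^{(i,j),q}$.

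The two pieces $[\beta]$ and $[\gamma]$ of the decomposition are then, by definition, $\partial^1[\alpha]$ and $\cc{\partial}^1[\alpha]$ respectively, which gives the splitting $\D^1=\partial^1+\cc{\partial}^1$. The final relations follow formally from $(\D^1)^2=0$: expanding gives
\begin{align*}
0=(\partial^1)^2+(\partial^1\cc{\partial}^1+\cc{\partial}^1\partial^1)+(\cc{\partial}^1)^2,
\end{align*}
and since the three summands land in the pairwise distinct bidegree components $(i+2,j)$, $(i+1,j+1)$, $(i,j+2)$ of the direct-sum decomposition of $E_1^{p+2,q}$, each must vanish separately. The only genuine obstacle I anticipate is the careful bookkeeping in step \textit{ii.)} to make sure the decomposition of $\D_L\alpha$ really descends to classes in the correct bidegree pieces; once property \textit{(IV)} is applied to $\D_L\beta=-\D_L\gamma$, however, everything falls into place.
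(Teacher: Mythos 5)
Your proposal is correct and follows essentially the same route as the paper: degree preservation of $\D_L$ with respect to the $G$ and $\cc{G}$ filtrations for the zeroth page, property \textit{(V)} with $n=1$ to split $\D_L\omega$ into the $(i+1,j)$ and $(i,j+1)$ pieces for the first page, and the bidegree decomposition of $(\D^1)^2=0$ for the final relations. You in fact supply details the paper leaves implicit (that the two components define $E_1$-classes, via $\D_L\beta=-\D_L\gamma$ — which also follows directly from property \textit{(II)} since that element lies in $G_{j+1}\cap\cc{G}_{i+1}\subseteq F_{i+j+2}$), so the argument is complete.
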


\begin{proof}
We start with the zeroth page. Let $\omega+F_{i+j+1}^{q+i+j}\in E_{(i,j),q}$, such that 
$\omega\in G_j^{q+i+j}\cap\cc{G}_i^{q+i+j}$, then 
	\begin{align*}
	\D^0(\omega+F_{i+j+1}^{q+i+j})= \D_L\omega+F_{i+j+1}^{q+i+j+1}.
	\end{align*}
We have that $\D_L\omega\in G_j^{q+i+j+1}\cap\cc{G}_i^{q+i+j+1}$ and hence $\D^0(\omega+F_{i+j+1}^{q+i+j})\in E_{(i,j),q+1}$.
For the first page let us choose $\omega+F_{i+j+1}^{q+i+j}+\D_L(F_{i+j}^{q+i+j-1})$, with 
$\omega\in G_j^{q+i+j}\cap\cc{G}_i^{q+i+j}$ and $\D_L\omega \in F_{i+j+1}^{q+i+j+1}$. Then
	\begin{align*}
	\D_L\omega\in G_j^{q+i+j+1}\cap\cc{G}_i^{q+i+j+1}\cap F_{i+j+1}^{q+i+j+1}=
	G_{j+1}^{q+i+j+1}\cap\cc{G}_i^{q+i+j+1}
	+ G_j^{q+i+j+1}\cap\cc{G}_{i+1}^{q+i+j+1}
	\end{align*}	  
and the claim follows.
\end{proof}

\begin{remark}
The whole spectral sequence (rather than just its zeroth and first pages) seem to be very interesting objects in themselves. However,  we will not explore it
here in more detail.
\end{remark}

\subsection{The Obstruction Class of transversally Complex Subalgebroids}\label{Sec: ObsTransComJac}
In Section \ref{Sec: WeakJac}, we have seen
that a transversally complex Jacobi structure $(J,K)$
comes from a generalized contact structure, if and only if 
there exists an extension of the inverse of $J$,  $\omega\in \Omega_L^2(M)$, and a real 2-form $B\in \Omega^2_L(M)$, such 
that
	\begin{align*}
	\D_L(\I\omega+B)(\Delta_1,\Delta_2,\Delta_3)=0\ \forall\Delta_i\in \pr_D\mathcal{L}.
	\end{align*}
We want to apply the techniques from the previous subsection to obtain a cohomological obstruction on this condition.
Using the formalism of Subsection \ref{SubSec: SpecSeq} and using the notation $\image(J^\sharp)=S$ , we see that this is 
equivalent to 	
	\begin{align*}
	\D_L(\I\omega+B)\in G_1^{3}=G_1^{3}\cap\cc{G}_0^3.
	\end{align*}
Since $\omega$ is non-degenerate on $S$, we have that $\omega\in F_0^2$ and $\omega\notin F_1^2$. Thus,
$\I\omega +B\in G_0^2\cap \cc{G}_0^2$ and $\D_L\omega , \D_L B\in F_1^3$. Hence both forms define classes in  
$\in E^{(0,0),2}_0$, denoted by $[\omega]_0$ and $[B]_0$. Moreover, the latter are both $\D^0$-closed in $E^{(\bullet,\bullet),\bullet}_0$
and hence they define iterated classes in $E^{(0,0),2}_1$,  denoted by $[[\omega]_0]_1$ and $[[B]_0]_1$. 

\begin{corollary}\label{Cor: EqutoCo}
The condition $\D_L(\I\omega+B)\in G_1^{3}=G_1^{3}\cap\cc{G}_0^3$ is equivalent to 
$\partial^1 (\I[[\omega]_0]_1+[[B]_0]_1)=0$.
\end{corollary}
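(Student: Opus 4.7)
The strategy is to identify an explicit representative of $\partial^1(\I[[\omega]_0]_1 + [[B]_0]_1)$ in $E_1^{(1,0),2}$ and then verify each implication directly.

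By Proposition~\ref{Prop: SplittingDiff} together with part \textit{(III)} of Lemma~\ref{Lem: filtCom}, any form $\gamma \in F_1^3$ admits a (non-canonical) decomposition $\gamma = \alpha + \beta$ with $\alpha \in G_0^3 \cap \cc{G}_1^3$ and $\beta \in G_1^3 \cap \cc{G}_0^3$, the ambiguity living in $G_1^3 \cap \cc{G}_1^3 \subseteq F_2^3$. Applied to $\gamma = \D_L(\I\omega + B)$, the construction of the first differential yields $\partial^1(\I[[\omega]_0]_1 + [[B]_0]_1) = [\alpha]_1 \in E_1^{(1,0),2}$, while $\cc{\partial}^1$ of the same class is represented by $\beta$ in $E_1^{(0,1),2}$.

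For the direction $(\Rightarrow)$, if $\D_L(\I\omega+B) \in G_1^3 = G_1^3 \cap \cc{G}_0^3$, then the trivial decomposition with $\alpha = 0$ and $\beta = \D_L(\I\omega+B)$ is admissible, so $[\alpha]_1 = 0$ in $E_1^{(1,0),2}$.

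For the direction $(\Leftarrow)$, assume $[\alpha]_1 = 0$, so that $\alpha = \rho + \D_L\eta$ for some $\rho \in F_2^3$ and $\eta \in F_1^2$. Splitting $\eta = \eta_1 + \I\eta_2$ into real and imaginary parts (both in $F_1^2$, since this filtration step is stable under complex conjugation), I would pass to the pair $(\omega - \eta_2, B - \eta_1)$: the new $\omega$ is still a real extension of $J^{-1}$ because $\eta_2 \in F_1^2$, and the new $B$ is still real. A direct computation gives $\D_L(\I(\omega-\eta_2) + (B-\eta_1)) = \rho + \beta$. Since $\beta \in G_1^3$ by construction, it remains to absorb $\rho \in F_2^3 \cap G_0^3 \cap \cc{G}_1^3$ into the $(0,1)$-component; this is done by invoking part \textit{(V)} of Lemma~\ref{Lem: filtCom} with $i=1,j=0,n=1$ to refine $\rho$ as a sum of terms in $G_1^3 \cap \cc{G}_1^3$ and $G_0^3 \cap \cc{G}_2^3$, and then iterating the same modification on the deeper filtration level until the whole $(1,0)$-component has been eliminated.

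The main obstacle is the bookkeeping in this final absorption step: one must check that each iteration only perturbs $\omega$ and $B$ within their admissible class (i.e.\ by elements of $F_\bullet^2$ that do not alter $J^{-1}$) and that the process terminates thanks to the finite length of the filtration $F_\bullet^3$. The full force of Lemma~\ref{Lem: filtCom}, in particular the compatibility of the $F$- and $G$-decompositions expressed by \textit{(IV)} and \textit{(V)}, is exactly what makes this bookkeeping work.
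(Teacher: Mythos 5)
Your setup and your forward implication are sound, and they coincide with what the paper itself does: decompose $\D_L(\I\omega+B)\in F_1^3$ into pieces in $G_0^3\cap\cc{G}_1^3$ and $G_1^3\cap\cc{G}_0^3$ (the ambiguity in $G_1^3\cap\cc{G}_1^3\subseteq F_2^3$ dies in the quotient), identify the first piece as a representative of $\partial^1(\I[[\omega]_0]_1+[[B]_0]_1)\in E_1^{(1,0),2}$, and observe that if $\D_L(\I\omega+B)\in G_1^3\cap\cc{G}_0^3$ then this piece can be taken to be zero. That is precisely the computation in the paper's proof, which in fact only writes out this one implication.

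The converse is where your argument has a genuine gap. From $[\alpha]_1=0$ you only obtain $\alpha=\rho+\D_L\eta$ with $\rho\in F_2^3$ and $\eta\in F_1^2$; only the sum $\rho+\D_L\eta$ is known to lie in $G_0^3\cap\cc{G}_1^3$, so the membership $\rho\in F_2^3\cap G_0^3\cap\cc{G}_1^3$ you invoke is unwarranted, though this is the lesser problem. After passing to $(\omega-\eta_2,B-\eta_1)$ you are left with $\D_L(\I(\omega-\eta_2)+(B-\eta_1))=\beta+\rho$, and what must be shown is that the part of $\rho$ not annihilated by $\Anti^3 K$ (by property \textit{(V)} of Lemma \ref{Lem: filtCom} this is the component in $G_0^3\cap\cc{G}_2^3$; the components in $G_2^3\cap\cc{G}_0^3$ and $G_1^3\cap\cc{G}_1^3$ already lie in $G_1^3$ and need no absorbing) can be removed by a further admissible change of $\omega$ and $B$. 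Your proposal to ``iterate the same modification on the deeper filtration level'' has nothing to run on: the one modification you performed was supplied by the hypothesis $\partial^1(\I[[\omega]_0]_1+[[B]_0]_1)=0$, i.e.\ exactness modulo $F_2^3+\D_L(F_1^2)$ at the $E_1$ level, and that hypothesis is now spent. No statement at the deeper level asserts that the $G_0^3\cap\cc{G}_2^3$ term is of the form (element of $G_1^3$) plus $\D_L$ of an allowed correction, and the finite length of the filtration does not make the leftover vanish --- at the last stage you are simply left with a term that need not restrict to zero on $\Anti^3 K$, with no moves remaining. Note also that your argument, even if completed, proves a statement about suitably modified representatives rather than the fixed pair $(\omega,B)$ appearing in the corollary; the vanishing of the $E_1$-class only controls $\D_L(\I\omega+B)$ up to $F_2^3+\D_L(F_1^2)$, so a representative-level converse needs an argument that neither your absorption step nor the paper's printed proof supplies.
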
 

\begin{proof}
 We have that $\D_L(\I\omega+B)\in G_1^{3}\cap\cc{G}_0^3$, which implies for the class
	\begin{align*}
	\D^1(\I\omega+B+ F_{1}^{2}+\D_L(F_{0}^{1}))=
	\D_L(\I\omega+ B)+F_{2}^{3} +\D_L(F_{1}^{2})\in 
	G_1^{3}\cap G_0^{3}+F_{2}^{3} +\D_L(F_{1}^{2})
	\end{align*}
Hence $\D^1(\I[[\omega]_0]_1+[[B]_0]_1)\in E_{(0,1),2}^1$. 
Using the splitting of the differential $\D^1=\partial^1+\cc{\partial}^1$, we 
get that $\partial^1(\I[[\omega]_0]_1+[[B]_0]_1)=0$.
\end{proof}

We want to go a step further and ask for which $\omega$ can we find a $B$, such that  
$\D_L(\I\omega+B)\in G_1^{3}\cap\cc{G}_0^3$. The answer gives the following 

\begin{lemma}\label{Lem: FinObs}
Let $\omega\in \Omega_L^2(M)$ be real, such that $\D_L\omega\in F_1^{3}$. Then there exists a $B\in \Omega_L^2(M)$, such that
$\D_L(\I\omega+B)\in G_1^{3}=G_1^{3}\cap\cc{G}_0^3$ if and only if 
	\begin{enumerate}
	\item $\partial^1\cc{\partial}^1[[\omega]_0]_1=0$
	\item $\cc{\partial}^1[[\omega]_0]_1-\partial^1[[\omega]_0]_1$ is $\D^1$-exact in $E_{3}^{(\bullet,\bullet),\bullet}$.
	\end{enumerate}
\end{lemma}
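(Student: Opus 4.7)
The plan is to translate the question, via Corollary \ref{Cor: EqutoCo}, into an algebraic problem on the first page of the spectral sequence, and then to exploit the splitting $\D^1 = \partial^1 + \cc{\partial}^1$ from Proposition \ref{Prop: SplittingDiff} together with the reality conventions on the classes. First I would set $u := [[\omega]_0]_1 \in E_1^{(0,0),2}$, which is a real class because $\omega$ is real. By Corollary \ref{Cor: EqutoCo}, a real $B$ with $\D_L(\I\omega + B) \in G_1^3$ exists if and only if there is a real class $v \in E_1^{(0,0),2}$ satisfying $\partial^1(\I u + v) = 0$, i.e.\ $\partial^1 v = -\I\, \partial^1 u$ in $E_1^{(1,0),2}$. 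Conjugating this identity and using $\cc{v} = v$, $\cc{u} = u$, together with the fact that complex conjugation swaps $E_1^{(1,0),2}$ and $E_1^{(0,1),2}$, I would recognise that the single equation is equivalent to the pair
\begin{align*}
\partial^1 v = -\I\, \partial^1 u, \qquad \cc{\partial}^1 v = \I\, \cc{\partial}^1 u,
\end{align*}
whose sum reads $\D^1 v = \I(\cc{\partial}^1 u - \partial^1 u)$. So the lemma reduces to the question: when does such a real $v$ exist?

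For necessity, assuming $v$ exists, the identity above shows that $\I(\cc{\partial}^1 u - \partial^1 u) = \D^1 v$ is $\D^1$-exact, which gives condition ii. Applying $\D^1$ once more and using $(\partial^1)^2 = (\cc{\partial}^1)^2 = 0$ together with $\partial^1\cc{\partial}^1 + \cc{\partial}^1\partial^1 = 0$ from Proposition \ref{Prop: SplittingDiff}, a short computation gives
\begin{align*}
0 = \D^1\D^1 v = \I\,\D^1\bigl(\cc{\partial}^1 u - \partial^1 u\bigr) = 2\I\, \partial^1\cc{\partial}^1 u,
\end{align*}
which is condition i.

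For sufficiency I would run the argument in reverse. Condition i ensures that $\I(\cc{\partial}^1 u - \partial^1 u)$ is $\D^1$-closed, and condition ii provides $v_0 \in E_1^{(0,0),2}$ with $\D^1 v_0 = \I(\cc{\partial}^1 u - \partial^1 u)$. Since this right-hand side is real, the symmetrization $v := \tfrac{1}{2}(v_0 + \cc{v_0})$ is again a preimage and is real; extracting the $(1,0)$-component recovers $\partial^1 v = -\I\, \partial^1 u$. Lifting $v$ to a representative in $\Omega_L^2(M)$ and averaging with its complex conjugate produces a real $B$ with $\D_L B \in F_1^3$ and $[[B]_0]_1 = v$, and Corollary \ref{Cor: EqutoCo} then delivers $\D_L(\I\omega + B) \in G_1^3$.

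The hard part will be verifying that complex conjugation descends compatibly through the quotients defining $E_1^{(\bullet,\bullet),\bullet}$, so that a real class really does admit a real lift with the correct filtration property. This rests on the fact that conjugation preserves $S$ and hence every $F_n^m$, while merely swapping $G_j^m \cap \cc{G}_i^m \leftrightarrow G_i^m \cap \cc{G}_j^m$; combined with properties (III)--(V) of Lemma \ref{Lem: filtCom}, this makes the bigraded splitting of the first page invariant under conjugation, so the symmetrization arguments above are justified.
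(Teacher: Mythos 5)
Your proposal is correct and follows essentially the same route as the paper's own proof: reduce via Corollary \ref{Cor: EqutoCo} to the class-level equation $\partial^1(\I[[\omega]_0]_1+[[B]_0]_1)=0$, then use reality of the classes and the splitting $\D^1=\partial^1+\cc{\partial}^1$ (your ``conjugate and add'' step is the paper's $2\,\mathrm{Re}$ computation) to show this is equivalent to $\D^1[[B]_0]_1=\I(\cc{\partial}^1[[\omega]_0]_1-\partial^1[[\omega]_0]_1)$, with condition \textit{i.)} being exactly $\D^1$-closedness of the right-hand side. The only differences are presentational: you derive \textit{i.)} from $(\D^1)^2=0$ and spell out the real-representative/lifting step that the paper leaves implicit.
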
 

Before we prove the Lemma, we want to discuss \textit{i.)} and \textit{ii.)}. Note that \textit{ii.)} can only be fulfilled, if \textit{i.)}  is fulfilled, since \textit{i.)} just ensures that $\cc{\partial}^1[[\omega]_0]_1-\partial^1[[\omega]_0]_1$ is $\D^1$-closed. 
Let us now prove the Lemma.

\begin{proof}[of Lemma \ref{Lem: FinObs}]
Let us first assume, that $\D_L(\I\omega+B)\in G_1^{3}=G_1^{3}\cap\cc{G}_0^3$ for a real $B$, which is equivalent to 
$\partial^1(\I[[\omega]_0]_1+[[B]_0]_1)=0$ by Proposition \ref{Cor: EqutoCo}. Hence we have 
	\begin{align*}
	2\D^1[[B]_0]_1&
	=2\mathrm{Re}(\D^1[[\I\omega+B]_0]_1)\\&
	=2\mathrm{Re}(\cc{\partial}^1[[\I\omega+B]_0]_1)\\&
	=(\cc{\partial}^1[[\I\omega+B]_0]_1+\cc{\cc{\partial}^1[[\I\omega+B]_0]_1})\\&
	= \D^1[[B]_0]_1 +\I(\cc{\partial}^1[[\omega]_0]_1-\partial^1[[\omega]_0]_1),
	\end{align*}
and hence $\cc{\partial}^1[[\omega]_0]_1-\partial^1[[\omega]_0]_1$ is $\D^1$-exact. Assuming, on the other hand, that 
$\I(\cc{\partial}^1[[\omega]_0]_1-\partial^1[[\omega]_0]_1)=\D^1[B]_1$, where we can choose a real representant $B$ of $[[B]_0]_1$, then it is
easy to see that $\partial^1 [[\I\omega+B]_0]_1=0$ and hence the claim follows.
\end{proof} 
Let us conclude this section with the main theorem of this note, which is basically just a summary of the previous results. Afterwards we will discuss the connection to generalized complex structures. 

\begin{theorem}\label{Thm: ObsCls}
Let $L\to M$ be a line bundle and let $(J,K)$ be a transversally complex Jacobi structure on $L$.  These  
data are induced by a generalized contact structure, if and only if 
	\begin{enumerate}
	\item $\partial^1\cc{\partial}^1[J^{-1}]_1=0$
	\item $\cc{\partial}^1[J^{-1}]_1-\partial^1[J^{-1}]_1$ is $\D^1$-exact in $E_{3}^{(\bullet,\bullet),\bullet}$,
	\end{enumerate}
where we interpret $J^{-1}$ as an element in $E^{(0,0),2}_0$. 
Moreover, the generalized contact structure inducing the data is of the form
	\begin{align*}
	\mathcal{L}=(K\oplus\mathrm{Ann}(K))^{\I\omega+B}
	\end{align*}
for any choice of $\omega\in J^{-1}$ real and any real $B\in\Omega^2_L(M)$ such that $[B]_0$ is closed and 
$\D^1[[B]_0]_1=\I(\cc{\partial}^1[J^{-1}]_1-\partial^1[J^{-1}]_1)$.
\end{theorem}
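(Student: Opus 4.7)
The plan is to assemble the machinery already built in the preceding two sections into a single coherent statement. By the corollary at the end of Section \ref{Sec: WeakJac}, together with Remark \ref{Rem: Equivalence H-genCon} which reduces the $H$-twisted case to the untwisted one, the data $(J,K)$ are induced by a generalized contact structure if and only if there exist a real extension $\omega \in \Omega_L^2(M)$ of $J^{-1}$ and a real $B \in \Omega_L^2(M)$ with $\D_L(\I\omega + B)(\Delta_1,\Delta_2,\Delta_3) = 0$ for all $\Delta_i \in K$, equivalently $\D_L(\I\omega + B) \in G_1^3 = G_1^3 \cap \cc{G}_0^3$ (the second equality being trivial since $\cc{G}_0^3 = \Omega_L^3(M)$). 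The strategy is to translate this existence statement into intrinsic cohomological data of the spectral sequence of Section \ref{SubSec: SpecSeq}.

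First, I would check that the hypothesis $\D_L \omega \in F_1^3$ required by Lemma \ref{Lem: FinObs} is automatic for any real extension $\omega$ of $J^{-1}$. This follows from the fact, recalled in Section \ref{Sec: WeakJac}, that $\D_{\image(J^\sharp)} J^{-1} = 0$, which is precisely the vanishing of $\D_L\omega$ on $\Anti^3 S_{\mathbb{C}}$ for $S = \image(J^\sharp)$. With this in hand, Lemma \ref{Lem: FinObs} immediately reformulates the existence of a suitable $B$ in terms of two cohomological conditions on the class $[[\omega]_0]_1 \in E_1^{(0,0),2}$.

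Next, I would verify that this class depends only on $J^{-1}$ and not on the chosen real extension. Any two real extensions $\omega$, $\omega'$ of $J^{-1}$ agree on $\Anti^2 S$, so their difference lies in $F_1^2$; hence their images in
\begin{equation*}
E_0^{(0,0),2} \;=\; \faktor{G_0^2 \cap \cc{G}_0^2}{F_1^2 \cap G_0^2 \cap \cc{G}_0^2} \;=\; \faktor{\Omega_L^2(M)}{F_1^2}
\end{equation*}
coincide, which justifies the notation $[J^{-1}]_1$ for the common first-page class. Consequently conditions (i) and (ii) of the theorem depend only on $J^{-1}$, and combining this with Corollary \ref{Cor: EqutoCo} and Lemma \ref{Lem: FinObs} yields the stated equivalence. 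The explicit expression $\mathcal{L} = (K \oplus \mathrm{Ann}(K))^{\I\omega + B}$ is then read off from the second assertion of the lemma preceding Remark \ref{Rem: Equivalence H-genCon}.

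The main obstacle I anticipate is the bookkeeping in the well-definedness step: besides checking that the zeroth-page representative is independent of the choice of real extension, one must also confirm that the operators $\partial^1$, $\cc{\partial}^1$ and the notion of $\D^1$-exactness at page $E_3$ genuinely descend to unambiguous conditions on the class $[J^{-1}]_1$. This is ultimately a routine verification using the splitting established in Proposition \ref{Prop: SplittingDiff}, but it is where one must be careful, because the filtrations $F_\bullet$, $G_\bullet$ and $\cc{G}_\bullet$ interact nontrivially and the statement mixes the splitting of $\D^1$ with data that is initially only tracked by the $F$-filtration.
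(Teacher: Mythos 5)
Your proposal is correct and follows essentially the same route as the paper, which presents Theorem \ref{Thm: ObsCls} as a direct summary of the corollary at the end of Section \ref{Sec: WeakJac}, Remark \ref{Rem: Equivalence H-genCon}, Corollary \ref{Cor: EqutoCo} and Lemma \ref{Lem: FinObs}. Your added checks that $\D_L\omega\in F_1^3$ is automatic (via $\D_{\image(J^\sharp)}J^{-1}=0$) and that $[J^{-1}]_1$ is independent of the chosen real extension are exactly the implicit verifications the paper glosses over.
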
 

\begin{corollary}\label{Cor: ObsCls}
Let $L\to M$ be a line bundle and let $(J,K)$ be a transversally complex Jacobi structure. If 
$[J^{-1}]_1=0$, then the data comes from a generalized contact structure of the form 
	\begin{align*}
	\mathcal{L}=(K\oplus\mathrm{Ann}(K))^{\I\omega},
	\end{align*}
where $\omega\in J^{-1}$ is a real extension of the inverse of $J^{-1}$.
\end{corollary}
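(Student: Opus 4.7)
The plan is to deduce this as a direct specialization of Theorem \ref{Thm: ObsCls}, since the hypothesis $[J^{-1}]_1=0$ is strictly stronger than the two cohomological conditions required by the theorem.

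First I would choose a real extension $\omega\in \Omega_L^2(M)$ of the inverse of $J$. By construction, $\omega$ lives in $F_0^2\cap\cc{F}_0^2=G_0^2\cap\cc{G}_0^2$ and $\D_L\omega\in F_1^3$, so it defines a class in $E_0^{(0,0),2}$ that is $\D^0$-closed, and hence an iterated class $[[\omega]_0]_1\in E_1^{(0,0),2}$ which by assumption vanishes. Then both conditions of Theorem \ref{Thm: ObsCls} are immediate: since $[J^{-1}]_1=0$, the expression $\partial^1\cc{\partial}^1[J^{-1}]_1$ is trivially zero, and the form $\cc{\partial}^1[J^{-1}]_1-\partial^1[J^{-1}]_1$ also vanishes identically, so it is $\D^1$-exact (indeed, exact by the zero primitive).

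Next I would apply Theorem \ref{Thm: ObsCls} with the explicit choice $B=0$. The real $2$-form $B=0$ is closed, and the equation
\begin{align*}
\D^1[[B]_0]_1=\I(\cc{\partial}^1[J^{-1}]_1-\partial^1[J^{-1}]_1)
\end{align*}
is satisfied because both sides vanish. The theorem then yields the generalized contact structure
\begin{align*}
\mathcal{L}=(K\oplus\mathrm{Ann}(K))^{\I\omega+B}=(K\oplus\mathrm{Ann}(K))^{\I\omega},
\end{align*}
inducing the transversally complex Jacobi structure $(J,K)$, as claimed.

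There is no substantive obstacle to overcome: the content is entirely packaged in Theorem \ref{Thm: ObsCls}, and the only conceptual point worth flagging is that the vanishing of $[J^{-1}]_1$ allows us to pick the trivial $B$-field correction, which is precisely what produces the cleaner formula $(K\oplus\mathrm{Ann}(K))^{\I\omega}$ of the statement.
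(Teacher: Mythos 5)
Your proposal is correct and matches the paper's intent exactly: the corollary is stated without a separate proof precisely because it is the specialization of Theorem \ref{Thm: ObsCls} in which $[J^{-1}]_1=0$ makes both obstructions vanish trivially, so one may take $B=0$ and obtain $\mathcal{L}=(K\oplus\mathrm{Ann}(K))^{\I\omega}$. Your observation that the vanishing of $[J^{-1}]_1$ is what permits the trivial $B$-field, and hence the cleaner formula, is exactly the point.
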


\begin{remark}[Generalized Complex Geometry]
Let us recall the mirror result in generalized complex geometry. In \cite{SymplFol} the author obtains similar results, 
given a regular Poisson structure $\pi\in\Secinfty(\Anti^2 TM)$ and a transversally complex involutive distribution 
$K\subseteq T_\mathbb{C} M$. Using the same formalism introduced in Subsection \ref{SubSec: SpecSeq}, we find that the data 
comes from a generalized complex structure, if and only if 
	\begin{enumerate}
	\item $\partial^1\cc{\partial}^1[\pi^{-1}]_1=0$
	\item $\cc{\partial}^1[\pi^{-1}]_1-\partial^1[\pi^{-1}]_1$ is $\D^1$-exact in $E_{3}^{(\bullet,\bullet),\bullet}$.
	\end{enumerate}	   
These obstructions differ quite a lot from those found in \cite{SymplFol}. The reason for this is that in \cite{SymplFol} the 
author searches for $H$-generalized complex structures, while we are mainly interested in  the obstructions for the existence 
of honest generalized complex 
structures. In view of Remark  \ref{Rem: Equivalence H-genCon} this is not a difference in the case of generalized contact 
geometry, in fact it is  in generalized complex geometry: not every $H$-generalized complex structure is a transformation 
of a generalized complex structure via a 2-form in the sense of Lemma \ref{Lem: TransGenCon}. It is an easy exercise to see 
that there is an $H$-generalized complex structure inducing $(\pi,K)$, if and only if 
	\begin{enumerate}
	\item $\D^1(\cc{\partial}^1[\pi^{-1}]_1-\partial^1[\pi^{-1}]_1)=0$
	\item $\D^2 [\cc{\partial}^1[\pi^{-1}]_1-\partial^1[\pi^{-1}]_1]_2=0$
	\item $\D^3 [[\cc{\partial}^1[\pi^{-1}]_1-\partial^1[\pi^{-1}]_1]_2]_3=0$.
	\end{enumerate}
To be more precise \textit{ii.)} is only well-defined, if \textit{i.)} is fulfilled and \textit{iii.)} is only well-defined, if  \textit{ii.)} is fulfilled. 
These
obstructions are equivalent to the ones found in \cite{SymplFol}. It is a bit of a computational effort to prove this, since 
the author used a transversal to $\image(\pi^\sharp)$ to obtain his results. We want to stress that, as we work completely within the spectral sequence, this is not really 
necessary and we prefer not to make this arbitrary choice.
\end{remark}

\section{Examples I: Five dimensional nilmanifolds}
Jacobi structures appear, among many 
other situations, as invariant Jacobi structures on Lie groups, which are canonically regular, and hence weakly regular. 
We begin this section describing  invariant Jacobi structures. Afterwards, we will formulate everything at the level of Lie 
algebras.

\begin{definition}
Let $L\to G$ be a line bundle over a Lie group $G$ and let $\Phi\colon G\to \Aut(L)$ be a group action covering the left 
multiplication. A Jacobi bracket $\{-,-\}\colon \Secinfty(L)\times\Secinfty(L)\to\Secinfty(L)$ is said to be invariant, if
	\begin{align*}
	\Phi^*_g\{\lambda,\mu\}=\{\Phi^*_g\lambda,\Phi^*_g\mu\} \ \forall\lambda,\mu\in\Secinfty(L), \ \forall g\in G.
	\end{align*}
\end{definition}

Our arena is the omni-Lie algebroid $DL\oplus J^1L$ for a line bundle 
$L\to G$ over a Lie group. For a Lie group action $\Phi\colon G\to \Aut(L)$, we have the the canonical action 
	\begin{align*}
	\mathbb{D}\Phi_g\colon \mathbb{D}L\ni (\Delta,\psi)\mapsto (D\Phi_g(\Delta), (D\Phi_{g^{-1}})^*\psi)\in \mathbb{D}L.   
	\end{align*}	 
\begin{definition}
Let $L\to G$ be a line bundle over a Lie group $G$, let $H\in\Secinfty(\Anti^3(DL)\otimes L)$ be closed and let $\Phi\colon 
G\to \Aut(L)$ be a Lie group action covering the left multiplication. A $H$-generalized contact
 structure $\mathcal{L}\subseteq \mathbb{D}_\mathbb{C}L$ is 
said to be $G$-invariant, if and only if $\mathbb{D}\Phi_g(\mathcal{L})=\mathcal{L}$ for all $g\in G$ and $H=\Phi_g^*H$. 
\end{definition}

\begin{proposition}
Let $L\to G$ be a line bundle over a Lie group $G$, let $H\in\Secinfty(\Anti^3(DL)\otimes L)$ be closed, let $\Phi\colon 
G\to \Aut(L)$ be a Lie group action covering the left multiplication and let  $\mathcal{L}\subseteq \mathbb{D}_\mathbb{C}L$ be a $G$-invariant $H$-generalized contact structure, then its Jacobi-structure is $G$-invariant. 
 \end{proposition}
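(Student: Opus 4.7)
The plan is to pass from the subbundle $\mathcal{L}$ to its defining endomorphism $\mathbb{K}\in\Secinfty(\End\mathbb{D}L)$ via Proposition \ref{Prop: GenConEnd}, and then exploit the block decomposition provided by the lemma following it. Since $\mathcal{L}$ is the $+\I$-eigenbundle of $\mathbb{K}$ and $\cc{\mathcal{L}}$ is its $-\I$-eigenbundle, $G$-invariance of $\mathcal{L}$ (and hence of $\cc{\mathcal{L}}$) is equivalent to the commutation relation
\begin{align*}
\mathbb{D}\Phi_g\circ \mathbb{K}=\mathbb{K}\circ \mathbb{D}\Phi_g\qquad \forall g\in G.
\end{align*}

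Next I would observe that the action $\mathbb{D}\Phi_g(\Delta,\psi)=(D\Phi_g(\Delta),(D\Phi_{g^{-1}})^*\psi)$ respects the decomposition $\mathbb{D}L=DL\oplus J^1L$, i.e.\ it is block-diagonal. Writing
\begin{align*}
\mathbb{K}=\begin{pmatrix}\phi & J^\sharp\\ \sigma^\flat & -\phi^*\end{pmatrix}
\end{align*}
as in the lemma and expanding the commutation relation blockwise, the off-diagonal $DL$-valued entry yields
\begin{align*}
D\Phi_g\circ J^\sharp=J^\sharp\circ (D\Phi_{g^{-1}})^*.
\end{align*}
Dualising and using the identification $J^1L=(DL)^*\otimes L$, this is exactly the statement that $J\in\Secinfty(\Anti^2(J^1L)^*\otimes L)$ is preserved by the natural lifted action of $\Phi_g$, i.e.\ $\Phi_g^*J=J$.

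Finally, I would translate $G$-invariance of the tensor $J$ back into $G$-invariance of the associated bracket via $\{\lambda,\mu\}=J(j^1\lambda,j^1\mu)$, using that the first-jet prolongation is natural with respect to the action $\Phi$, so that $j^1(\Phi_g^*\lambda)=(D\Phi_{g^{-1}})^*\,\Phi_g^*(j^1\lambda)$ in the identification above. I expect the main (albeit mild) obstacle to be purely notational: one has to keep careful track of the fact that the cotangent-like slot $J^1L$ transforms by the inverse-dual of $D\Phi_g$, so that the equivariance of $J^\sharp$ really does correspond to $J$ being pointwise invariant and not, say, covariant with a twist. Once this bookkeeping is settled, the proof is a direct consequence of Proposition \ref{Prop: GenConEnd} and the block form of $\mathbb{K}$; the closedness and invariance of $H$ play no role because the Jacobi tensor is read off from the endomorphism without reference to the bracket on $\mathbb{D}L$.
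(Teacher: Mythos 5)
The paper states this proposition without proof, so there is no argument to compare yours against line by line; it is treated as an immediate consequence of the definitions. Your route is correct and is the natural way to make the claim precise: since $\mathbb{D}\Phi_g$ is a real bundle automorphism, preserving $\mathcal{L}$ forces it to preserve $\cc{\mathcal{L}}$ as well, so invariance of $\mathcal{L}$ is indeed equivalent to $\mathbb{D}\Phi_g\circ\mathbb{K}=\mathbb{K}\circ\mathbb{D}\Phi_g$; the action is block-diagonal for $\mathbb{D}L=DL\oplus J^1L$, and the upper-right block of the commutation relation is exactly the equivariance of $J^\sharp$, i.e.\ invariance of the tensor $J$. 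You are also right that integrability enters only to guarantee that $J$ is a Jacobi tensor at all, and that invariance of $H$ plays no role. The only slip is notational, in the step you yourself flag as the bookkeeping risk: the naturality statement you need is simply $j^1(\Phi_g^*\lambda)=\Phi_g^*(j^1\lambda)$, where the right-hand side is the pull-back of Atiyah $1$-forms from Remark \ref{Rem: Pull-back}; inserting an extra factor $(D\Phi_{g^{-1}})^*$ in front double-counts the twist, because fibrewise that twist \emph{is} the pull-back on $J^1L$. With that correction, $\Phi_g^*\{\lambda,\mu\}=\Phi_g^*\bigl(J(j^1\lambda,j^1\mu)\bigr)=J\bigl(\Phi_g^*j^1\lambda,\Phi_g^*j^1\mu\bigr)=\{\Phi_g^*\lambda,\Phi_g^*\mu\}$, which is the asserted invariance of the Jacobi bracket.
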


Let us now trivialize, with the help of the Lie group action $\Phi\colon G\to \Aut(L)$, the line bundle itself, its derivations and its first jet. Similarly to the tangent bundle of a Lie group, we have 
	\begin{align*}
	L\cong G\times \ell
	\end{align*}
where $\ell=\Secinfty(L)^G$. Note that $\ell$ is a 1-dimensional vector space over $\mathbb{R}$. Moreover, the action of $G$ looks in this trivialization like
	\begin{align*}
	\Phi_g(h,l)=(gh,l) \ \forall (h,l)\in L , \ \forall g\in G.
	\end{align*}
Additionally the Atiyah algeroid is also a trivial vector bundle by 
	\begin{align*}
	DL\cong G\times \Secinfty(DL)^G, 
	\end{align*}
where $\Secinfty(DL)^G$ is a $(\dim(G)+1)$-dimensional vector space over $\mathbb{R}$. Moreover, since the symbol maps invariant derivations to left-invariant vector fields, we have the \emph{$G$-invariant 
Spencer sequence} 
	\begin{align*}
	0\to \mathbb{R}\to\Secinfty(DL)^G\to \lie{g}\to 0,
	\end{align*}
by using the fact that $G$-invariant Endomorphisms are just
multiplications by constants. This sequence splits canonically, since $\Secinfty(DL)^G$ acts as derivations on $\ell$, we have $\Secinfty(L)\cong 
\Cinfty(M)\otimes_\mathbb{R}\ell$. Thus we have 
	\begin{align*}
	\Secinfty(DL)^G\cong\lie g \oplus \mathbb{R}, 
	\end{align*}
with bracket 
	\begin{align*}
	[(\xi,r),(\eta,k)]=([\xi,\eta],0)\ \forall(\xi,r),(\eta,k)\in\lie g\oplus\mathbb{R}.
	\end{align*}
Similarly, using $J^1L=(DL)^*\otimes L$,  plus the choice of a basis of $\ell$, we get 
	\begin{align*}
	\Secinfty(J^1L)^G \cong \lie g^*\oplus \mathbb{R}.
	\end{align*}
 The differential $\D_L$ reduces two 
	\begin{align*}
	\D_L(\alpha +k\mathbbm{1}^*)=\delta_{CE}\alpha +\mathbbm{1}^*\wedge \alpha,
	\end{align*}		
where $\alpha\in \lie g^*$ and $\mathbbm{1}^*$ is the projection to the $\mathbb{R}$-component. 
These are all the ingredients, we need to describe $G$-invariant generalized contact structures via their infinitesimal data,
i.e. in terms of the Lie algebra $\lie{g}=\mathrm{Lie}(G)$.

\begin{proposition}
Let $L\to G$ be a line bundle over a Lie group $G$, let $H\in\Secinfty(\Anti^3(DL)\otimes L)$ be closed and let $\Phi\colon 
G\to \Aut(L)$ be a Lie group action. A $G$-invariant $H$-generalized contact structure $\mathcal{L}\subseteq \mathbb{D}_
\mathbb{C}L$ is 
equivalently described by a subspace $\mathcal{L}^\lie g\subseteq [(\lie g\oplus \mathbb{R})\oplus (\lie g^*\oplus
\mathbb{R})]_\mathbb{C}$, which is $H$-involutive, maximally isotropic  and fullfills 
$\mathcal{L}^\lie g\cap \cc{\mathcal{L}^\lie g}=\{0\}$,
where all the operations on $[(\lie g\oplus \mathbb{R})\oplus (\lie g^*\oplus \mathbb{R})]_\mathbb{C}$ are given by the 
identification $\Secinfty(DL\oplus J^1 L)^G=\Secinfty(DL)^G\oplus\Secinfty(J^1L)^G\cong 
[(\lie g\oplus \mathbb{R})\oplus (\lie g^*\oplus \mathbb{R})]$. 
\end{proposition}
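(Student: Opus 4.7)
The plan is to translate between the global picture on $G$ and the infinitesimal picture via the trivializations of $L$, $DL$, and $J^1 L$ established just above the statement. First I would observe that, once we identify $\mathbb{D}L \cong G \times [(\lie g \oplus \mathbb{R}) \oplus (\lie g^* \oplus \mathbb{R})]$ in a $G$-equivariant way (with $G$ acting trivially on the second factor), a $G$-invariant subbundle $\mathcal{L} \subseteq \mathbb{D}_\mathbb{C} L$ is completely determined by its fiber $\mathcal{L}^{\lie g}$ at the identity, and conversely any complex subspace of $[(\lie g \oplus \mathbb{R}) \oplus (\lie g^* \oplus \mathbb{R})]_\mathbb{C}$ extends uniquely to a $G$-invariant subbundle. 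This already sets up the bijection $\mathcal{L} \leftrightarrow \mathcal{L}^{\lie g}$.

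Next I would check that each of the three defining conditions of a generalized contact structure transfers cleanly. The pairing $\bla -,-\bra$ is $G$-invariant by construction of $\mathbb{D}\Phi_g$, so on invariant sections it reduces to the canonical evaluation pairing on $(\lie g \oplus \mathbb{R}) \oplus (\lie g^* \oplus \mathbb{R})$; maximal isotropy of $\mathcal{L}$ is then equivalent to maximal isotropy of $\mathcal{L}^{\lie g}$ for this pairing, simply because fibers span. The transversality condition $\mathcal{L} \cap \cc{\mathcal{L}} = \{0\}$ similarly amounts to $\mathcal{L}^{\lie g} \cap \cc{\mathcal{L}^{\lie g}} = \{0\}$, since both conditions can be tested fiberwise at $e$.

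The main work, and the principal obstacle, is the involutivity statement. The $G$-invariance of $H$ guarantees that $[\![-,-]\!]_H$ restricts to a bracket on $G$-invariant sections, and using the explicit forms of the bracket on $\Secinfty(DL)^G \cong \lie g \oplus \mathbb{R}$ and of the differential $\D_L(\alpha + k\mathbbm{1}^*) = \delta_{CE}\alpha + \mathbbm{1}^* \wedge \alpha$ on $\Secinfty(J^1 L)^G \cong \lie g^* \oplus \mathbb{R}$, the Dorfman bracket of two invariant sections is computed purely in terms of the Chevalley-Eilenberg differential, the natural $\lie g$-action on $\lie g^*$, and the invariant component of $H$, giving an $H$-twisted Courant-type bracket on $[(\lie g \oplus \mathbb{R}) \oplus (\lie g^* \oplus \mathbb{R})]_\mathbb{C}$. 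The delicate point is that invariant sections generate all sections of $\mathbb{D}L$ as a $C^\infty(G)$-module, so involutivity of $\mathcal{L}$ is equivalent to involutivity on invariant generators; the standard anchor-correction terms that appear when multiplying sections by smooth functions land in $\mathcal{L}$ thanks to maximal isotropy together with involutivity on invariant sections (this is the usual mechanism making such reduction arguments work for Dirac-like structures). Putting these three translations together yields the claimed equivalence.
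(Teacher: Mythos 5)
Your proposal is correct and follows essentially the same route as the paper, which simply notes that an invariant generalized contact structure is completely characterized by its invariant sections; you have filled in the details (equivariant trivialization of $\mathbb{D}L$, fiberwise transfer of the pairing and transversality conditions, and the Leibniz-correction argument showing involutivity on invariant generators suffices thanks to isotropy) that the paper leaves implicit. No gaps.
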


\begin{proof}
The proof is based on the fact that an invariant generalized contact structure is completely characterized by its invariant 
sections. 
\end{proof}

The idea is now to forget about the Lie group and perform all the existence proofs directly on the Lie algebra, having in 
mind, of course, that we can reconstruct a generalized contact structure on the Lie group by translating. Being a bit more 
precise, we give the following 

\begin{definition}
Let $\lie g$ be a Lie algebra with the abelian extension $\lie g_{\mathbb{R}}:=\lie g\oplus \mathbb{R}$, where we denote by 
$\mathbbm{1}$
and $\mathbbm{1}^*$ the canonical elements in $\lie g$ and $\lie g^*$, respectively, and let 
$H\in \Anti^3 (\lie{g}^*_\mathbb{R})$ be $\D$-closed. The omni-Lie algebra of $\lie g$ is the
vector space $\lie g_\mathbb{R}\oplus \lie g_\mathbb{R}^*$ together with 
\begin{enumerate}
	\item the (Dorfman-like, H-twisted) bracket
		\begin{align*}
		[\![(X_1,\psi_1) ,(X_2,\psi_2 )]\!]_H
		=([X_1,X_2],\Lie_{X_1} \psi_2- \iota_{X_2}\D\psi_1+\iota_{X_1}\iota_{X_2}H)
		\end{align*}		 
	\item the non-degenerate pairing 
		\begin{align*}
		\bla (X_1,\psi_1) ,(X_2,\psi_2 )\bra := \psi_1(X_2)+\psi_2(X_1)
		\end{align*}
	\item the canonical projection $\pr_D\colon \lie g_\mathbb{R}\oplus \lie g_\mathbb{R}^* \to \lie g_\mathbb{R}$
	\end{enumerate}	
Here the differential is given by $\D=\delta_{CE}+\mathbbm{1}^*\wedge$ and $\Lie_X=[\iota_X,\D]$.  
\end{definition}

The obvious way to define now a generalized contact Lie algebra is the following

\begin{definition} \label{Def: GenConLieAlg}
Let $\lie g$ be a Lie algebra. A generalized contact structure on $\lie g$ is a subbundle $\mathcal{L}\in( \lie g_\mathbb{R}
\oplus \lie g_\mathbb{R}^*)_\mathbb{C}$, which is involutive, maximally isotropic and fulfils $\mathcal{L}\cap
\cc{\mathcal{L}}=\{0\}$. 
\end{definition}

From the above discussion, we can immediatly obtain
\begin{lemma}
Let $G$ be a Lie group with lie algebra $\lie{g}$. Then there is a $1:1$-correspondence of  generalized contact structures on $\lie g$ and left-invariant generalized contact structures on $G\times \mathbb{R}\to G$ given by the left-translation.
\end{lemma}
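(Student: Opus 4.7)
The plan is to bootstrap directly from the immediately preceding proposition. That proposition already asserts that $G$-invariant $H$-generalized contact structures on $L = G \times \mathbb{R} \to G$ correspond bijectively to subspaces $\mathcal{L}^{\mathfrak g} \subseteq [(\mathfrak g \oplus \mathbb{R}) \oplus (\mathfrak g^* \oplus \mathbb{R})]_{\mathbb{C}}$ which are maximally isotropic, $H$-involutive, and satisfy $\mathcal{L}^{\mathfrak g} \cap \overline{\mathcal{L}^{\mathfrak g}} = \{0\}$. Specializing to $H = 0$ on both sides would, up to unpacking definitions, yield exactly the claim of the lemma. So the lemma is essentially an identification statement rather than a content-bearing theorem, and the proof amounts to checking that Definition 4.6 agrees with the invariant version of Definition 4.1.

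First, I would carefully verify that the algebraic data introduced on $\mathfrak g_{\mathbb{R}} \oplus \mathfrak g_{\mathbb{R}}^*$ in Definition 4.5 coincide with the structures on $\Gamma^\infty(\mathbb{D}L)^G$ inherited from the omni-Lie algebroid $\mathbb{D}L = DL \oplus J^1 L$. Concretely: the identification $\Gamma^\infty(DL)^G \cong \mathfrak g \oplus \mathbb{R}$ with bracket $[(\xi,r),(\eta,k)] = ([\xi,\eta],0)$ is already in place from the preceding discussion; the dual identification $\Gamma^\infty(J^1 L)^G \cong \mathfrak g^* \oplus \mathbb{R}$ combined with the reduction $\D_L = \delta_{CE} + \mathbbm{1}^* \wedge$ produces exactly the differential of Definition 4.5; and the $L$-valued pairing $\bla -,-\bra$ evaluated on invariant sections lands in $\Gamma^\infty(L)^G = \ell \cong \mathbb{R}$, giving the pairing on $\mathfrak g_{\mathbb{R}} \oplus \mathfrak g_{\mathbb{R}}^*$. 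Hence the Dorfman bracket, pairing, and projection on invariant sections match those of Definition 4.5 verbatim.

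Next, I would produce the bijection explicitly in both directions. From a generalized contact Lie algebra $\mathcal{L}^{\mathfrak g}$ in the sense of Definition 4.6, define a left-invariant subbundle $\mathcal{L} \subseteq \mathbb{D}_{\mathbb{C}} L$ by left-translating: $\mathcal{L}_g := \mathbb{D}\Phi_g(\mathcal{L}^{\mathfrak g})$. Since the Dorfman bracket, pairing and projection are all $\mathbb{D}\Phi_g$-equivariant, maximal isotropy, the transversality $\mathcal{L} \cap \overline{\mathcal{L}} = \{0\}$ and involutivity of $\mathcal{L}$ at each fiber reduce to the same conditions at the identity, i.e., to the conditions of Definition 4.6 on $\mathcal{L}^{\mathfrak g}$. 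Conversely, starting from a left-invariant generalized contact structure $\mathcal{L}$, set $\mathcal{L}^{\mathfrak g} := \mathcal{L}_e$ under the identification $(\mathbb{D}L)_e \cong \mathfrak g_{\mathbb{R}} \oplus \mathfrak g_{\mathbb{R}}^*$; equivariance ensures that the two constructions are mutually inverse.

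The only delicate point is the integrability condition: one must check that the $C^\infty(G)$-linear involutivity of sections of $\mathcal{L}$ under $[\![-,-]\!]$ is \emph{equivalent} to the purely algebraic involutivity of $\mathcal{L}^{\mathfrak g}$ under the omni-Lie algebra bracket of Definition 4.5. The nontrivial direction (algebraic involutivity implies involutivity of arbitrary sections) follows by writing any section of $\mathcal{L}$ as a $C^\infty(G)$-linear combination of invariant generators and using the Leibniz-type identities of the Dorfman bracket to reduce the bracket of two such sections to a combination of brackets of invariant generators plus terms that already lie in $\mathcal{L}$. I expect this to be the main verification step; everything else is a direct translation through the established invariant trivializations.
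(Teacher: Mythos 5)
Your proposal is correct and follows essentially the same route as the paper, which treats the lemma as an immediate consequence of the preceding proposition that an invariant generalized contact structure is completely characterized by its invariant sections, specialized to $H=0$. Your extra care with the Leibniz-type reduction of involutivity to invariant generators (where the anomalous terms vanish by isotropy) is exactly the verification the paper leaves implicit.
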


As in the geometric setting we have extreme cases 

\begin{example}\label{Ex: ConLieAlg}
Let $(\lie g,\Theta)$ be a $(2n+1)$ contact Lie algebra, i.e.  $\Theta\in\lie g^*$, such that 
$\Theta\wedge(\delta_{CE}\Theta)^n\neq 0$, then we denote by $\Omega=p^*\Theta$ for $p\colon\lie g_{\mathbb{R}}\to \lie g$ 
and get that
	\begin{align*}
	\mathcal{L}=\{(X,\I\iota_{X}\D\Omega)\in  (\lie g_\mathbb{R}\oplus \lie g_\mathbb{R}^*)_\mathbb{C}
	\ | \ X\in(\lie g_\mathbb{R})_\mathbb{C}\}	
	\end{align*}	  
gives $\lie g$ the structure of a generalized contact Lie algebra. 
\end{example}

\begin{example}\label{Ex: ComLieAlg}
Let $\lie g$ be a Lie algebra and $\phi\in\End(\lie g_\mathbb{R})$ be a complex structure, then 
\begin{align*}
\mathcal{L}=\lie g_\mathbb{R}^{(1,0)}\oplus \mathrm{Ann}(\lie g_\mathbb{R}^{(1,0)})
\end{align*}
gives $\lie g $ the structure of a generalized contact Lie algebra, where $\lie{g}_\mathbb{R}^{(1,0)}$ is the 
$+\I$-Eigenbundle of $\phi$. 
\end{example}

We shrink ourselves now to the case of 5-dimensional nilpotent Lie algebras, since we are able to use already existing classification results, which are not available in more general classes of Lie algebras.   
To be more precise, we want to prove the following 
\begin{theorem}\label{Thm: NilLieALgGenCOn}
Every five dimensional nilpotent Lie algebra possesses a generalized contact structure.
\end{theorem}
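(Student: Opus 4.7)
The plan is to invoke the classification of 5-dimensional real nilpotent Lie algebras (there are 9 up to isomorphism) and construct a generalized contact structure case by case, using the three tools at our disposal: contact structures (Example \ref{Ex: ConLieAlg}), Atiyah complex structures (Example \ref{Ex: ComLieAlg}), or the mixed type built from a transversally complex Jacobi structure $(J,K)$ satisfying the cohomological criterion of Theorem \ref{Thm: ObsCls}. The framework of Definition \ref{Def: GenConLieAlg} reduces the problem entirely to finite-dimensional linear algebra on $\lie g_\mathbb R \oplus \lie g_\mathbb R^*$ with the explicit differential $\D = \delta_{CE} + \mathbbm 1^* \wedge$.

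First I would tabulate the 9 isomorphism classes via their Chevalley-Eilenberg differentials on a chosen basis of $\lie g^*$. For each class the first attempt is to exhibit a contact form $\Theta \in \lie g^*$ with $\Theta \wedge (\delta_{CE}\Theta)^2 \neq 0$; this succeeds for example for the 5-dimensional Heisenberg algebra. For classes with sufficiently large center, I would instead try to equip the extended Lie algebra $\lie g_\mathbb R = \lie g \oplus \mathbb R$ with a complex structure $\phi$ integrable with respect to the bracket of $\lie g_\mathbb R$; the abelian algebra and several decomposable classes are handled this way.

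For the genuinely mixed remaining classes, I would build by hand a weakly regular Jacobi tensor $J$ and an involutive complex subalgebra $K \subset (\lie g_\mathbb R)_\mathbb C$ with $K + \cc K = (\lie g_\mathbb R)_\mathbb C$ and $K \cap \cc K = \image(J^\sharp)_\mathbb C$, exploiting the tower of centers in the nilpotent structure to choose $K$ and $J$ with as many \emph{flat} directions as possible. With such an adapted choice, one can typically arrange $[J^{-1}]_1 = 0$ in $E_1^{(0,0),2}$, and then Corollary \ref{Cor: ObsCls} immediately produces the generalized contact structure $\mathcal L = (K \oplus \mathrm{Ann}(K))^{\I\omega}$ for a real extension $\omega$ of $J^{-1}$. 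Only if this cleaner route fails for some class would one need to fall back to verifying the two finer obstructions of Theorem \ref{Thm: ObsCls}, namely $\partial^1\cc{\partial}^1[J^{-1}]_1 = 0$ and the $\D^1$-exactness of $\cc{\partial}^1[J^{-1}]_1 - \partial^1[J^{-1}]_1$.

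The main obstacle is the case-by-case nature of the argument: one must, for each non-contact and non-Atiyah-complex class, actually produce the adapted pair $(J,K)$. The payoff of the machinery of Section \ref{Sec: ObsTransComJac} is that this search becomes finite and mechanical, since $\lie g_\mathbb R$ is only 6-dimensional, the differential $\D$ is explicit, and the spectral sequence obstructions live in finite-dimensional quotients. The nilpotency of $\lie g$ is essential to make the flat choices available, mirroring what happens in \cite{SymplFol} for 6-dimensional nilpotent Lie algebras and generalized complex structures.
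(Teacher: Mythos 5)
Your proposal follows essentially the same route as the paper: invoke the classification of the nine 5-dimensional nilpotent Lie algebras, dispose of most classes via the extreme cases (a contact form for $\mathfrak{L}_{5,6}$, complex structures on $\lie g_\mathbb{R}$ for the six classes covered by Salamon's classification), and handle the remaining mixed classes $\mathfrak{L}_{5,3}$ and $\mathfrak{L}_{5,7}$ by exhibiting a transversally complex Jacobi structure $(J,K)$ with a closed extension of $J^{-1}$, so that $[J^{-1}]_1=0$ and Corollary \ref{Cor: ObsCls} applies. The only difference is that the paper actually writes down the explicit tensors $J$, $K$ and $\omega$ for those two classes, which your outline leaves as a finite search to be carried out.
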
 
From this theorem, we can immediately conclude
\begin{corollary}
Every five dimensional nilmanifold possesses an invariant generalized contact structure.   
\end{corollary}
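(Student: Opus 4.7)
The plan is to prove Theorem \ref{Thm: NilLieALgGenCOn} by a case-by-case analysis using the classical classification of five-dimensional nilpotent Lie algebras into nine isomorphism classes. For each class, I exhibit a concrete generalized contact structure in the sense of Definition \ref{Def: GenConLieAlg} by invoking exactly one of three tools: the contact-type construction Example \ref{Ex: ConLieAlg}, the Atiyah-complex construction Example \ref{Ex: ComLieAlg}, or Corollary \ref{Cor: ObsCls} applied to a well-chosen transversally complex Jacobi structure on $\lie g$.

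First I would isolate those $\lie g$ admitting a contact form, i.e.\ a $\Theta \in \lie g^*$ with $\Theta \wedge (\delta_{CE}\Theta)^2 \neq 0$; Example \ref{Ex: ConLieAlg} immediately solves these cases. The Heisenberg algebra $\lie h_5$ and several filiform algebras in the classification fall here. Next, I would identify those $\lie g$ for which the six-dimensional abelian extension $\lie g_\mathbb{R} = \lie g \oplus \mathbb{R}$ (with the bracket forced by the Atiyah description) admits an integrable complex structure $\phi \in \End(\lie g_\mathbb{R})$; Example \ref{Ex: ComLieAlg} then applies. Since $\lie g_\mathbb{R}$ is six-dimensional nilpotent, Salamon's classification of complex structures on six-dimensional nilpotent Lie algebras tells us exactly which of the nine $\lie g_\mathbb{R}$ admit such $\phi$, handling further cases.

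For the remaining Lie algebras, which are neither of contact nor of complex type, I would construct an explicit transversally complex Jacobi structure $(J,K)$ on $\lie g$ and verify the hypothesis of Corollary \ref{Cor: ObsCls}: namely, $[J^{-1}]_1 = 0$ in the first page of the spectral sequence of $(S,K)$ with $S = \image(J^\sharp)$. The resulting generalized contact structure is then the explicit $\mathcal{L} = (K\oplus \mathrm{Ann}(K))^{\I\omega}$ for any real extension $\omega$ of $J^{-1}$. I would choose a basis of $\lie g_\mathbb{R}$ adapted to the lower central series, pick $J$ of maximal rank compatible with that filtration, and then take $K$ to be spanned by $\image(J^\sharp)_\mathbb{C}$ together with enough "holomorphic" directions to complete a transversally complex subbundle.

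The main obstacle is the combinatorial bookkeeping in the non-extreme cases: one must simultaneously arrange $K\subseteq (\lie g_\mathbb{R})_\mathbb{C}$ involutive with $K+\cc K = (\lie g_\mathbb{R})_\mathbb{C}$ and $K\cap \cc K = \image(J^\sharp)_\mathbb{C}$, and show that, after $\D_L$-differentiation, the inverse $J^{-1}\in E_0^{(0,0),2}$ in fact lies in $F_2^3$ modulo the appropriate exact terms, so that $[J^{-1}]_1 = 0$. In parallel with the strategy used by Cavalcanti--Gualtieri (\cite{SymplFol}) for generalized complex structures on six-dimensional nilmanifolds, the nilpotence of $\lie g$ saves the day: the Chevalley--Eilenberg differential strictly raises the lower-central weight, forcing the obstructing components to land in degrees where they must vanish. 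Once the nine Lie algebras are thus dispatched, the theorem is proved.
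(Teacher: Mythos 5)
Your proposal follows essentially the same route as the paper: reduce to Theorem \ref{Thm: NilLieALgGenCOn} via left-invariance, run through de Graaf's nine isomorphism classes, settle the contact-type algebras by Example \ref{Ex: ConLieAlg} and the complex-type ones via Salamon's classification and Example \ref{Ex: ComLieAlg}, and treat the leftover algebras ($\mathfrak{L}_{5,3}$ and $\mathfrak{L}_{5,7}$ in the paper's notation) by exhibiting a transversally complex Jacobi structure and invoking Corollary \ref{Cor: ObsCls}. The only difference is that where you appeal to a general nilpotence/weight heuristic to kill $[J^{-1}]_1$, the paper simply writes down explicit extensions $\omega$ of $J^{-1}$ that are $\D$-closed, so the class vanishes by inspection.
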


To prove Theorem \ref{Thm: NilLieALgGenCOn}, we will use Section \ref{Sec: ObsTransComJac}. In particular, we will find  a 
generalized contact structure on a given Lie algebra by looking for  a transversally complex Jacobi 
structure. Afterwards, we use Theorem \ref{Thm: ObsCls} to prove the existence of a generalized contact structure. Note that 
we did not prove the invariant analogue of Theorem \ref{Thm: ObsCls}, but as the proof suggests this can be done with a bit 
of effort. 

A big help in proving Theorem \ref{Thm: NilLieALgGenCOn} is the classification of five dimensional 
nilpotent Lie algebras provided in 
\cite{DEGRAAF}. In that work the author proved that there are exactly nine (isomorphism classes of) five dimensional 
nilpotent 
Lie algebras. 
Since we want to prove that there are generalized contact structures on all of them, it seems 
convenient to test first the extreme examples, i.e. integrable complex structures on $\lie g_\mathbb{R}$ 
(Example \ref{Ex: ComLieAlg}) on the one hand 
and contact structures on the other hand (Example \ref{Ex: ConLieAlg}). For the complex structures we can can use the work of 
Salamon in \cite{SALAMON}, 
where he classified all the complex nilpotent Lie algebras of six dimensions. In the following, we denote  by $\{e_1,
\dots,e_5\}$ a given basis of a five dimensional vector space $\lie g$. The only 5-dimensional nilpotent Lie algebras,
such that $\lie g_\mathbb{R}$ admits a complex structure are (we use the notation of \cite{DEGRAAF} for his description of 5-
dimesional nilpotent Lie algebras):  

\begin{enumerate}
\item $\mathfrak{L}_{5,1}$ (abelian)
\item $\mathfrak{L}_{5,2}: \ [e_1,e_2]=e_3$
\item $\mathfrak{L}_{5,4}: \ [e_1,e_2]=e_5,\ [e_3,e_4]=e_5 $
\item $\mathfrak{L}_{5,5}: \ [e_1,e_2]=e_3,\ [e_1,e_3]=e_5,\ [e_2,e_4]=e_5 $
\item $\mathfrak{L}_{5,8}: \ [e_1,e_2]=e_4,\ [e_1,e_3]=e_5$
\item $\mathfrak{L}_{5,9}: \ [e_1,e_2]=e_3,\ [e_1,e_3]=e_4, \ [e_2,e_3]=e_5$
\end{enumerate}

We have to check that the remaining 5-dimensional nilpotent Lie algebras $\mathfrak{L}_{5,3}$, $\mathfrak{L}_{5,6}$ and $
\mathfrak{L}_{5,7}$ are also generalized 
contact. Let us denote by $\{e^1,\dots,e^5\}$ the dual basis of  $\{e_1,\dots,e_5\}$.

\begin{subsection}{$\mathfrak{L}_{5,3}: [e_1,e_2]=e_3,\ [e_1,e_3]=e_4$}
It is easy to see that $J=e_3\wedge e_1+ \mathbbm{1}\wedge e_4$ is a Jacobi structure. Additionally
	\begin{align*}
	K:=\langle \mathbbm{1},e_1,e_3,e_4,e_2-\I e_5 \rangle
	\end{align*}
is a subalgebra of $(\lie{g}_\mathbb{R})_\mathbb{C}$ and  that $(J,K)$ is a transversally complex Jacobi structure.
Moreover, $\omega=e^1\wedge e^3 -\mathbbm{1}^*\wedge e^4$ is an extension of the inverse of $J$ and we obtain that $\D\omega=
\delta_{CE}
\omega+\mathbbm{1}^*\wedge \omega=0$, which implies that $[J^{-1}]_1=[[\omega]_0]_1=0$.  Using Corollary 
\ref{Cor: ObsCls}, we see that there is a generalized contact structure inducing this data, an explicit example is 
given by 
	\begin{align*}
	(K\oplus\mathrm{Ann}(K))^{\I\omega}.
	\end{align*}
\end{subsection}

\subsection{$\mathfrak{L}_{5,6}\colon [e_1,e_2]=e_3,\ [e_1,e_3]=e_4,\ [e_1,e_4]=e_5, \ [e_2,e_3]=e_5$}
This Lie algebra is actually a contact Lie algebra with contact 1-form $\Theta=e^5$.

\begin{subsection}{$\mathfrak{L}_{5,7}\colon [e_1,e_2]=e_3,\ [e_1,e_3]=e_4,\ [e_1,e_4]=e_5$}\label{SubSec: Lie7}
It is easy to see that  $J=e_1\wedge e_3 +e_4\wedge(\mathbbm{1}+e_5)$ is a Jacobi structure. Let us define 
\begin{align*}
K:=\langle \mathbbm{1}+e_5, e_1,e_3,e_4, \mathbbm{1}+\I e_2\rangle.
\end{align*}
We have $[\lie g_\mathbb{R}, \lie g_\mathbb{R} ]\subseteq \image(J^\sharp)$ and hence integrability of the 
corresponding $K$ is canonically fulfilled. Moreover, we have that $\omega=-(e^1\wedge e^3+ e^4\wedge e^5)+(\mathbbm{1}-e^5)
\wedge e^4$ is an extension of $J^{-1}$ 
and is closed with respect to $\D=\delta_{CE}+\mathbbm{1}\wedge$. Using Corollary \ref{Cor: ObsCls}, we find a 
generalized contact structure given by 
	\begin{align*}
	(K\oplus\mathrm{Ann}(K))^{\I\omega}.
	\end{align*}
	
\end{subsection}  

\

We have already seen that the Lie algebras  $\mathfrak{L}_{5,3}$, $\mathfrak{L}_{5,6}$ and $\mathfrak{L}_{5,7}$ do not admit 
a complex structure on their one dimensional abelian extension. Moreover,  $\mathfrak{L}_{5,6}$ is a contact Lie algebra. 
In the following we want to show that $\mathfrak{L}_{5,3}$ and $\mathfrak{L}_{5,7}$ do not admit a contact structure, so 
that there are generalized contact structures on them but not of the extreme types. 
Let us first collect some basic properties of contact Lie algebras

\begin{theorem}\label{Thm: KerCon}
Let $\lie g$ be a nilpotent Lie algebra and $\Theta\in\lie g^*$ be a contact form. Then the center $Z(\lie g)$ has dimension 
one.
\end{theorem}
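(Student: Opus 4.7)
The plan is a short two-step argument: produce an upper bound $\dim Z(\lie g)\le 1$ from the non-degeneracy condition on $\Theta$, and then invoke nilpotency to get $\dim Z(\lie g)\ge 1$.

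For the upper bound, I would first unpack the contact condition: since $\Theta\wedge(\delta_{CE}\Theta)^n\neq 0$ and $\dim\lie g=2n+1$, the $2$-form $\omega:=\delta_{CE}\Theta\in\Anti^2\lie g^*$ must have maximal possible rank $2n$, so its radical $\ker\omega^\flat=\{X\in\lie g\mid \iota_X\omega=0\}$ is exactly one-dimensional (spanned by the Reeb-type element $R\in\lie g$ characterised by $\Theta(R)=1$, $\iota_R\omega=0$). Next I would observe that any central element $z\in Z(\lie g)$ satisfies
\begin{align*}
\omega(z,X)=(\delta_{CE}\Theta)(z,X)=-\Theta([z,X])=0\qquad\text{for all }X\in\lie g,
\end{align*}
so $Z(\lie g)\subseteq\ker\omega^\flat$, which gives $\dim Z(\lie g)\le 1$.

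For the lower bound, I would simply use the classical fact that every nonzero nilpotent Lie algebra has a nontrivial centre (an immediate consequence of Engel's theorem applied to $\mathrm{ad}$ on $\lie g$, or of the fact that the descending/ascending central series terminates). Combining the two inequalities yields $\dim Z(\lie g)=1$.

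The argument is essentially routine; the only thing that requires a moment's thought is the rank statement in the first step, and even that is the standard observation that a $(2n+1)$-dimensional space carrying a $1$-form $\Theta$ with $\Theta\wedge\omega^n\neq 0$ forces $\omega$ to be of maximal rank on $\lie g$ with one-dimensional radical transverse to $\ker\Theta$. I do not foresee any genuine obstacle; no spectral-sequence or generalized-contact machinery is needed here, which is consistent with the theorem's role as a purely Lie-algebraic lemma used afterwards to rule out contact structures on $\mathfrak{L}_{5,3}$ and $\mathfrak{L}_{5,7}$ (by exhibiting centres of dimension $>1$).
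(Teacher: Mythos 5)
Your argument is correct. Note, however, that the paper does not prove Theorem \ref{Thm: KerCon} at all: it simply defers to the reference \cite{Remm}. Your proof is the standard self-contained one, and both halves are sound: any $z\in Z(\lie g)$ satisfies $(\delta_{CE}\Theta)(z,X)=-\Theta([z,X])=0$ for all $X$, so the center lies in the radical of $\delta_{CE}\Theta$, and the condition $\Theta\wedge(\delta_{CE}\Theta)^n\neq 0$ forces $\delta_{CE}\Theta$ to have rank $2n$ on the $(2n+1)$-dimensional space $\lie g$, hence a one-dimensional radical (on which $\Theta$ is moreover nonzero, giving the Reeb element); nilpotency then supplies $\dim Z(\lie g)\geq 1$ via Engel or the central series. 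This is exactly the kind of elementary Lie-theoretic argument the cited source uses, so your proposal fills in a proof the paper leaves implicit rather than diverging from it; it also makes transparent that nilpotency is only needed for the lower bound, while the upper bound $\dim Z(\lie g)\leq 1$ holds for any contact Lie algebra.
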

A reference of Theorem \ref{Thm: KerCon} and of its proof is \cite{Remm}. 
As a first consequence we have 
\begin{corollary}
The Lie algebra $\mathfrak{L}_{5,3}$ is not contact. 
\end{corollary}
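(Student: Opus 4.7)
The plan is to apply Theorem \ref{Thm: KerCon} directly: since every contact nilpotent Lie algebra has a one-dimensional center, it suffices to exhibit that the center of $\mathfrak{L}_{5,3}$ has dimension strictly greater than one.

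Concretely, I would take an arbitrary element $x=\sum_{i=1}^{5}a_i e_i$ and impose $[x,e_j]=0$ for $j=1,\dots,5$. Using the only non-trivial brackets $[e_1,e_2]=e_3$ and $[e_1,e_3]=e_4$, the commutators $[x,e_1]$ and $[x,e_2]$ immediately force $a_1=a_2=a_3=0$, while $a_4$ and $a_5$ remain unconstrained because $e_4$ and $e_5$ do not appear in any bracket. Hence $Z(\mathfrak{L}_{5,3})=\langle e_4, e_5\rangle$ is two-dimensional.

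Since $\dim Z(\mathfrak{L}_{5,3})=2\neq 1$, Theorem \ref{Thm: KerCon} rules out the existence of any contact form on $\mathfrak{L}_{5,3}$, which is the claim. There is no real obstacle here: the computation of the center of a nilpotent Lie algebra with such a small set of structure constants is routine, and the substantive work has already been done in the cited theorem.
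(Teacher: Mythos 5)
Your proof is correct and follows the same route as the paper: the corollary is stated there as an immediate consequence of Theorem \ref{Thm: KerCon}, with the (routine) computation of the center left implicit, and your computation indeed shows $Z(\mathfrak{L}_{5,3})=\langle e_4,e_5\rangle$ is two-dimensional. One tiny wording slip: $e_4$ does appear in a bracket as the value $[e_1,e_3]=e_4$; what matters is that $e_4$ and $e_5$ never occur as arguments of a nonzero bracket, so they are central.
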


The only Lie algebra, which is left over is $\mathfrak{L}_{5,7}$. Here we do not have a general statement about contact 
Lie algebras that we can use, nevertheless we have
\begin{lemma}
The Lie algebra $\mathfrak{L}_{5,7}$ is not contact.
\end{lemma}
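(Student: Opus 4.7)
The plan is to rule out a contact form on $\mathfrak{L}_{5,7}$ by a structural observation on the Chevalley--Eilenberg differential. Working in the basis dual to $\{e_1,\ldots,e_5\}$, I would first read off $\delta_{CE}$ on generators from the structure constants $[e_1,e_2]=e_3$, $[e_1,e_3]=e_4$, $[e_1,e_4]=e_5$. Because $e_1$ and $e_2$ never appear as brackets, $\delta_{CE} e^1 = \delta_{CE} e^2 = 0$, while the remaining generators give
\begin{align*}
\delta_{CE} e^3 = -e^1\wedge e^2,\qquad \delta_{CE} e^4 = -e^1\wedge e^3,\qquad \delta_{CE} e^5 = -e^1\wedge e^4.
\end{align*}

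The crucial point is then purely formal: every $\delta_{CE} e^i$ lies in the subspace $e^1\wedge \mathfrak{L}_{5,7}^{*}\subseteq\Anti^{2}\mathfrak{L}_{5,7}^{*}$. Consequently, for an arbitrary 1-form $\Theta=\sum_{i=1}^{5} a_i e^i$, the 2-form $\delta_{CE}\Theta$ is a linear combination of $e^1\wedge e^j$'s, so it too lies in $e^1\wedge \mathfrak{L}_{5,7}^{*}$.

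Any wedge product of two elements of $e^1\wedge\mathfrak{L}_{5,7}^{*}$ acquires a factor $e^1\wedge e^1=0$, hence $(\delta_{CE}\Theta)^{2}=0$ for every $\Theta$. In particular $\Theta\wedge(\delta_{CE}\Theta)^{2}=0$, which by the definition recalled in Example \ref{Ex: ConLieAlg} means no $\Theta\in\mathfrak{L}_{5,7}^{*}$ is a contact form. There is really no obstacle here: the computation reduces to noting that $e^1$ divides $\delta_{CE}\alpha$ for every $\alpha\in\mathfrak{L}_{5,7}^{*}$, which already forces the top-degree contact product to vanish identically. The analogous obstruction (the derived subalgebra lying in $e^1\wedge\mathfrak{L}_{5,7}^{*}$) is arguably the cleanest way to also see that $\mathfrak{L}_{5,7}$ fails the classical contact condition without a case-by-case analysis on the coefficients $a_i$.
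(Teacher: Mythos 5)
Your argument is correct and is essentially the paper's own proof: both rest on the observation that the structure constants force $\delta_{CE}\alpha \in e^1\wedge\mathfrak{L}_{5,7}^*$ for every $\alpha\in\mathfrak{L}_{5,7}^*$, so that $(\delta_{CE}\alpha)^2=0$ and hence $\alpha\wedge(\delta_{CE}\alpha)^2=0$, ruling out a contact form. The explicit computation of $\delta_{CE}$ on the dual basis is just a spelled-out version of the same step.
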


\begin{proof}
From the commutation relation in Subsection \ref{SubSec: Lie7} it is clear that we have $\delta_{CE}(\Anti^\bullet\lie g^*)
\subseteq e^1\wedge\Anti^\bullet \lie g^*$. Hence we have that for all $\alpha\in \lie g^*$ $\delta_{CE}
\alpha=e^1\wedge\beta$ for some $\beta\in\lie g^*$. As a consequence $\alpha\wedge(\delta_{CE}\alpha)^2=0$ for all $\alpha
\in \lie g^*$, and hence the Lie algebra can not be contact.  
\end{proof}

\begin{remark}
To prove that $\mathfrak{L}_{5,1}$, $\mathfrak{L}_{5,2}$, $\mathfrak{L}_{5,8}$ and $\mathfrak{L}_{5,9}$ are not contact it 
is enough to
obtain that all of them can be excluded by Theorem \ref{Thm: KerCon}. Moreover, for the remaining ones the contact 
structures are given by $e^5$.  
\end{remark}

As a summary we have the following table

\begin{center}
\begin{tabular}
[c]{c|c|c|c}
 &  contact  & $\lie g_\mathbb{R}$-complex & generalized contact \\ \hline\hline
 \vspace{-9pt}&  &  & \\
 $\mathfrak L_{5,1}$   & $\times$ & $\checkmark$ & $\checkmark$ \\ \hline
 \vspace{-9pt}&  & &  \\
$\mathfrak L_{5,2}$ & $\times$ & $\checkmark$ & $\checkmark$ \\ \hline
 \vspace{-9pt} &  & &  \\
$\mathfrak L_{5,3}$  & $\times$ & $\times$ &$\checkmark$ \\ \hline
 \vspace{-9pt} &   & & \\
 $\mathfrak L_{5,4}$  & $\checkmark$ &$\checkmark$ &$\checkmark$ \\ \hline
  \vspace{-9pt}&   & & \\
$\mathfrak L_{5,5}$  & $\checkmark$  &$\checkmark$ &$\checkmark$ \\ \hline
 \vspace{-9pt}&  & & \\
$\mathfrak L_{5,6}$  & $\checkmark$  & $\times$ &$\checkmark$ \\ \hline
 \vspace{-9pt}&  &  &\\
$\mathfrak L_{5,7}$  & $\times$  & $\times$ & $\checkmark$ \\ \hline
\vspace{-9pt}&  & & \\
$\mathfrak L_{5,8}$    & $\times$ &$\checkmark$ &$\checkmark$ \\ \hline
 \vspace{-9pt}&  &  &\\
$\mathfrak L_{5,9}$  & $\times$ & $\checkmark$&$\checkmark$ \\ \hline
\end{tabular}

\end{center}

\section{Examples II: Contact Fiber Bundles}

The next class of examples are \emph{contact fiber bundles} over a complex base manifold. We begin explaining what we 
mean by 
contact fiber bundle.  Similarly to symplectic fiber bundles, there is a contact structure on the vertical bundle 
	\begin{align*}
	\mathrm{Ver}_L(P)=\sigma^{-1}(\mathrm{Ver}(P))\subseteq DL,
	\end{align*}
for a line bundle $L\to P$, such that $P\to M$ is a fiber bundle. More precisely:
\begin{definition}
Let $\pi\colon P\to M$ be a fiber bundle  and let $L\to P$ by a line bundle. A
smooth family of contact manifolds is the data of $L\to P$ together with a closed non-degenerate 2-form $\omega\in
\Secinfty(\Anti^2(\mathrm{Ver}_L(P))^*\otimes L)$. 
If additionally the contact structures $(L\at{P_m}\to 
P_m,\omega\at{D(L|_{P_m})})$ are contactomorphic, we say that $L\to P$ is a contact fiber 
bundle.    
\end{definition}

Before we come to examples, we want to make some general remarks on smooth families of contact structures and contact fiber 
bundles, which are more or less known. 

\begin{remark}
Let $(L\to P, \pi\colon P\to M, \omega)$ be a smooth family of contact structures. If the fiber is compact and connected and the base is connected, 
then the data automatically is a contact fiber bundle. This follows from the stability theorem of Gray in \cite{GrayStab}, 
which states that two contact forms which are connected by a smooth path of contact structures are contactomorphic.  
\end{remark}

\begin{remark}\label{Rem: LocConBun}
As in the setting of symplectic fiber bundles, we can express the data in local terms, namely:
the datum of a contact fiber bundle over a manifold $M$ with typical fiber $F$  is equivalent to:
	\begin{enumerate}
	\item a line bundle $L_F\to F$ and a contact 2-form $\omega\in\Secinfty(\Anti^2 (DL)^*\otimes L)$
	\item an open cover $\{U_i\}_{i\in I}$
	\item smooth transition maps $T_{ij}\colon U_i\cap U_j\to \Aut(L)$ which are point-wise contactomorphisms
	\end{enumerate} 
\end{remark}

\begin{remark}
Obviously, one can define smooth families of contact structures as a Jacobi structure of contact type,  such that the 
characteristic 
distribution of it is the vertical bundle of a fiber bundle.
\end{remark}

Using this remarks, we can show that under certain assumptions on the base, a smooth family of contact structures always 
induces a generalized contact structure on the total space. 

\begin{lemma}\label{Lem: ConFibBun}
Let $\pi\colon P\to M$ be a fiber bundle with typical fiber $F$ over a complex base $M$, let $L\to M$ by a line bundle and 
let $J\in \Secinfty(\Anti^2(J^1 L)^*\otimes L)$ be a Jacobi 
structure giving $P$ the structure of a smooth family of contact manifolds. 
Then $P$ possesses a generalized contact structure
with Jacobi structure $J$. 
\end{lemma}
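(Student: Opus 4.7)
The plan is to apply Corollary \ref{Cor: ObsCls}, so the task reduces to (i) constructing a transversally complex Jacobi structure $(J,K)$ on $L\to P$ whose Jacobi part is the given $J$, and (ii) verifying that $[J^{-1}]_1=0$ in the first page of the associated spectral sequence.

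For (i), set $S:=\image(J^\sharp)=\mathrm{Ver}_L(P)$. Since $\End(L)\subset S$ and $\sigma(S)=\mathrm{Ver}(P)$, the symbol induces a canonical isomorphism $DL/S\cong \pi^*TM$. Using the complex structure on $M$, define $K:=\{\Delta\in D_\mathbb{C}L:d\pi(\sigma(\Delta))\in T^{(1,0)}M\}$, i.e.\ the preimage of $\pi^*T^{(1,0)}M$ under the quotient $D_\mathbb{C}L\to (DL/S)_\mathbb{C}$. The identities $K+\cc{K}=D_\mathbb{C}L$ and $K\cap \cc{K}=S_\mathbb{C}$ hold by construction. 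For involutivity I would test $[\Delta_1,\Delta_2]$ against any pullback $\pi^*\alpha$ of a $(0,1)$-form $\alpha$ on $M$: using $\pi^*\alpha(\sigma(\Delta_i))=0$, the definition of exterior derivative gives $\pi^*\alpha(\sigma[\Delta_1,\Delta_2])=-\pi^*(d\alpha)(\sigma\Delta_1,\sigma\Delta_2)$, and the right-hand side vanishes because $d\alpha$ has no $(2,0)$-component while $d\pi(\sigma\Delta_i)\in T^{(1,0)}M$. Hence $\sigma[\Delta_1,\Delta_2]$ is still annihilated by all pullbacks of $(0,1)$-forms, so it projects into $T^{(1,0)}M$.

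For (ii), the decisive observation is that the identity derivation $\mathbbm{1}\in\Secinfty(DL)$ has vanishing symbol, so $\mathbbm{1}\in\Secinfty(\mathrm{Ver}_L(P))$. Because $[\mathbbm{1},-]=0$ and $\mathbbm{1}$ acts as the identity on $L$, Cartan's formula yields $\iota_\mathbbm{1}\D_{\mathrm{Ver}_L(P)}+\D_{\mathrm{Ver}_L(P)}\iota_\mathbbm{1}=\mathrm{id}$ on the family Der-complex $(\Secinfty(\Anti^\bullet \mathrm{Ver}_L(P)^*\otimes L),\D_{\mathrm{Ver}_L(P)})$, making it acyclic by exactly the same mechanism as for a single line bundle. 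Since $J^{-1}$ is $\D_{\mathrm{Ver}_L(P)}$-closed by construction, the form $\Theta:=\iota_\mathbbm{1}(J^{-1})\in\Secinfty(\mathrm{Ver}_L(P)^*\otimes L)$ satisfies $\D_{\mathrm{Ver}_L(P)}\Theta=J^{-1}$. Under the natural restriction isomorphisms $E^{(0,0),k}_0\cong \Secinfty(\Anti^k\mathrm{Ver}_L(P)^*\otimes L)$ the differential $\D^0$ corresponds to $\D_{\mathrm{Ver}_L(P)}$, so $[J^{-1}]_0=\D^0[\Theta]_0$, and consequently $[J^{-1}]_1=0$.

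Corollary \ref{Cor: ObsCls} then supplies the generalized contact structure $\mathcal{L}=(K\oplus\mathrm{Ann}(K))^{\I\omega}$, where $\omega\in\Omega_L^2(P)$ is any real extension of $J^{-1}$ (which exists because $\mathrm{Ver}_L(P)\hookrightarrow DL$ splits as vector bundles), and the induced Jacobi structure is $J$ by construction. The main obstacle is really only the involutivity of $K$; once that is in place, the key point is conceptual, namely that the globally defined identity derivation $\mathbbm{1}$ furnishes a contracting homotopy that trivialises the obstruction $[J^{-1}]_1$, a phenomenon without analogue in the generalized complex setting (where the more elaborate obstructions of Bailey are genuinely needed).
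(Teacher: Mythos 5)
Your proposal is correct and follows essentially the same route as the paper: you define $K=(\sigma\circ T\pi)^{-1}(T^{(1,0)}M)$, observe that $J^{-1}$ is leafwise exact with primitive $\iota_{\mathbbm{1}}J^{-1}$ so that $[J^{-1}]_1=0$, and invoke Corollary \ref{Cor: ObsCls}, exactly as in the paper. The only difference is that you spell out details (involutivity of $K$, the identification of $E_0^{(0,0),\bullet}$ with the leafwise complex) that the paper leaves as ``an easy consequence of the definitions.''
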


\begin{proof}
First of all, we notice that that the Jacobi structure is weakly regular, since $\image(J^\sharp)=\mathrm{Ver}_L(P)$.
The only thing what we have to show is that the data induce a transversally complex Jacobi structure, since we have that the 
inverse of the Jacobi structure is leaf-wise exact with canonical primitive $\iota_\mathbbm{1} \omega$, 
which implies that $[J^{-1}]_1=0$ by Corollary \ref{Cor: ObsCls}. Let us denote by 
$T^{(1,0)}M\subseteq T_\mathbb{C}M$ the holomorphic tangent bundle induced by the complex structure on $M$. With this we define 
	\begin{align*}
	K:= (\sigma\circ T\pi)^{-1}(T^{(1,0)}M)\subseteq D_\mathbb{C}L.
	\end{align*}	
It is an easy consequence of the definitions of the bundles that $(J,K)$ is a transversally complex Jacobi structure, and hence there exists a generalized contact structure inducing it.
\end{proof}

We see that in this case the existence of a generalized contact structure is unobstructed. Thus we want to show that 
this kind of structures exist and give some classes of examples. 

\begin{example}[Projectivized Vertical Bundle]

Let $\pi\colon P\to M$ fiber bundle  with typical fiber $F$ over a complex base $M$. Given a set of local 
trivializations 
$(U_i,\tau_i)_{i\in I}$ 
	\begin{center}
		\begin{tikzcd}
 		 P\at{U_i}  \arrow[rr, "\tau_i"]\arrow[dr, "\pi"'] & &  U_i\times F \arrow[dl, "\pr_1"] \\
 		& U_i &
		\end{tikzcd}
	\end{center}
with transition functions $\tau_{ij}\colon U_i\cap U_j\to \Diffeo(F)$ . Let us denote by 
$T_* \tau_{ij}\colon U_i\cap U_j\to \Diffeo(T^*F)$ there cotangent lifts, which fulfil also the cocycle condition for 
transition functions and hence belong to a fiber bundle $ \tilde{V} \to M$ (actually this is $\mathrm{Ver}^*(P)$) with local 
trivializations $(U_i,\phi_i)_{i\in I}$, such 
that the transition functions fulfil $\phi_{ij}= T_* \tau_{ij}$.   
We consider now the canonical symplectic form $\omega_{can}\in \Secinfty(\Anti^2 T^*(T^*F))$  on $T^*F$, note that the 
functions $T_* \tau_{ij}$ are symplectomorphisms, since they are point transformations. The next step is to consider  the 
the fiber bundle $V\to M$ with typical fiber   $T^*F\smallsetminus{0_F}$, which we get by the obvious restrictions. 
Note that on $T^*F\smallsetminus{0_F}$ we have a canonical $\mathbb{R}^\times$-action which is free and proper and the for 
restricted symplectic form  $\omega_{can}$ we have that $\Lie_{(1)_{T^*F}}\omega_{can}=\omega_{can}$. Using the results from  
\cite{2017SIGMA..13..059B}, we conclude that the associated line bundle $L \to \mathbb{R}T^*F
:=\frac {T^*F\smallsetminus{0_F}}
{\mathbb{R}^\times}$ carries a contact structure and the transitions functions, which are obviously commuting with the $
\mathbb{R}^\times$-action,  act as line bundle automorphisms preserving the contact structure. 
This is exactly the data we need to cook up a contact fiber bundle (\ref{Rem: LocConBun}) and hence its total space possess 
a generalized contact structure, due to Lemma \ref{Lem: ConFibBun}. Note that here the input was a 
generic fiber bundle over a complex base and the output is a generalized contact bundle. Moreover, if both the base and the 
fiber are compact, then the output is also compact. We hence proved the existence of compact examples. 
\end{example}

After this rather general construction of examples of contact fiber bundles, 
we want to give a more down-to-earth class which also naturally appears.   

\begin{example}[Principal fiber Bundles]
Let $\lie g$ be a Lie algebra with a contact 1-form $\Theta\in\lie g^*$. Let us consider a Lie group $G$ integrating $\lie g$ and a 
manifold $M$ with a complex structure. Additionally, let $P\to M$ be a $G$-principal fiber bundle and let
  $\mathbb{R}_P\to P$ be the trivial line bundle, 
  where we denote by $1_P$ the generating section. Note that here the gauge algebroid splits 
canonically as $D\mathbb{R}_P=TP\oplus\mathbb{R}_P$. Moreover, we have that 
$\mathrm{Ver}_{\mathbb{R}_P}(P)=\mathrm{Ver}(P)\oplus \mathbb{R}_P$. Thus, a generic derivation 
$\Delta_p\in \mathrm{Ver}_{\mathbb{R}_P}(P) $  is of the form $\Delta=(\xi_P(p), k)\in \mathrm{Ver}(P)\oplus \mathbb{R}_P $
for the fundamental vector field $\xi_P$ of a unique $\xi\in \lie g$ and for $p\in P$. We define 
$\omega \in \Secinfty(\Anti^2(\mathrm{Ver}_{\mathbb{R}_P}(P))^*\otimes {\mathbb{R}_P})$ by 
	\begin{align*}
	\omega((\xi_P(p),k), (\eta_P(p),r))= \big((\delta_{CE}\Theta)(\xi,\eta)+k\Theta(\eta)-r\Theta(\xi)\big)\cdot 1_P(p).
	\end{align*}
It is easy to check that $\omega$ gives $P\to M$ the structure of a contact fiber bundle. Since $M$ was assumed to be 
complex, we can apply  Lemma \ref{Lem: ConFibBun} to obtain a generalized contact bundle on $P$. Note that this notion 
includes $\mathbb{S}^1$-principal fiber bundles over a complex manifold. Moreover, contact Lie algebras are an active 
field of research and there are many examples around and even a classification of nilpotent contact Lie algebras in  
\cite{Alvarez}.   
\end{example}

\begin{remark}
There is also the notion of smooth families of locally conformal symplectic structures, which corresponds to a weakly regular 
Jacobi structures with just even dimensional leaves such that the projection to the leaf space is the projection map of fiber 
bundle. A transversally complex Jacobi 
structure can be induced, at least in some cases,  by a Atiyah-complex structure on the base.
In this specific case there are examples which do not 
come from generalized contact structure (one explicit counterexample is provided in Section \ref{Sec: CountEx}). 
Since the notion of Atiyah-complex structures is by far not as developed as the notion of complex structures, it is not very 
easy at the moment to construct bigger classes of counter examples.  
\end{remark}

\section{A Counterexample}\label{Sec: CountEx}
In this last section, we want to construct a transversally complex Jacobi structure which cannot be induced by a generalized 
contact structure. The remarkable feature of this counterexample is, that it is, as manifolds,  a global product of a 
(locally conformal) symplectic manifold and an Atiyah-complex manifold. Note that in \cite{2017arXiv171108310S} it was proven 
that every generalized contact bundle is locally isomorphic to a product,
however not all products arise in this way.

Let us consider the 2-sphere $\mathbb{S}^2$ and its symplectic form $\omega\in\Secinfty(\Anti^2 T^*\mathbb{S}^2)$. Its inverse 
$\pi\in \Secinfty(\Anti^2 TM)$ is a Poisson structure and hence 
$\pi+\mathbbm{1}\wedge 0=\pi$ is a Jacobi structure on the trivial line bundle.

The second manifold which is involved is the circle $\mathbb{S}^1$. Our counterexample will live on the trivial line bundle over the product 
	\begin{align*}
	\mathbb{R}_M\to M:=\mathbb{S}^2\times \mathbb{S}^1.
	\end{align*}	 
Using Remark \ref{Rem: TrivLine}, we see that 
	\begin{align*}
	D\mathbb{R}_M=TM\oplus \mathbb{R}_M= T\mathbb{S}^2\oplus T\mathbb{S}^1\oplus \mathrm{R}_M
	\end{align*}	 
and we can define a Jacobi structure $J=\pi+\mathbbm{1}\wedge 0=\pi$ on it by "pulling back". We see that 
$\image(J^\sharp)=T\mathbb{S}^2\subseteq D\mathbb{R}_M$. 

The next step is to choose an everywhere non-vanishing vector field $e\in \Secinfty(T\mathbb{S}^1)$ and define
	\begin{align*}
	K:=T_\mathbb{C}\mathbb{S}^2\oplus\langle \mathbbm{1}-\I e\rangle\subseteq D_\mathbb{C}\mathbb{R}_M.
	\end{align*}
Note that the derivation  $\mathbbm{1}-\I e\in (T\mathbb{S}^1\oplus\mathbb{R}_{\mathbb{S}^1})_\mathbb{C}\subseteq
D_\mathbb{C}\mathbb{R}_{\mathbb{S}^1}$	is the $+\I$-Eigenbundle of an 
Atiyah-complex structure on $\mathbb{R}_{\mathbb{S}^1}\to \mathbb{S}^1$. Moreover, we have  
	\begin{align}\label{Eq: Split}
	D_\mathbb{C}\mathbb{R}_M=T_\mathbb{C}\mathbb{S}^2\oplus \langle \mathbbm{1}-\I e\rangle
	\oplus\langle \mathbbm{1}+\I e\rangle.
	\end{align} 	 
An easy computation shows that $(J,K)$ is a transversally complex Jacobi structure. Our claim is now that $(J,K)$ can not be 
induced by a generalized contact structure. To see this, let us examine the $Der$-complex a bit closer. We have that 
	\begin{align*}
	\Anti^k(D\mathbb{R}_M)^*\otimes \mathbb{R}_M=\Anti^k(TM^*\oplus\mathbb{R}_M).
	\end{align*}
Using the notation of Remark \ref{Rem: TrivLine}, we obtain that a $\psi\in \Secinfty(\Anti^k(TM^*\oplus\mathbb{R}_M))$ can 
be uniquely 
written as $\psi=\alpha+\mathbbm{1}^*\wedge \beta$ for some $(\alpha,\beta)\in \Secinfty(\Anti^k T^*M\oplus \Anti^{k-1} T^*M)
$. It is an easy verification to show that for the differential $\D_{\mathbb{R}_M}$ we have 
	\begin{align*}
	\D_{\mathbb{R}_M}(\alpha+\mathbbm{1}^*\wedge \beta)=\D\alpha+\mathbbm{1}^*\wedge(\alpha-\D\beta)
	\end{align*}
where $\D$ is the usual de Rham differential, 
 \cite[Remark 1.1.1]{2017arXiv171108310S}. Now we want to pass to the spectral sequence, therefore we split the $Der$-complex 
according to the splitting of Equation \ref{Eq: Split}, we have 
	\begin{align*}
	\Omega_{\mathbb{R}_M}^{(i,j),q}
	= \Secinfty(\Anti^q T^*\mathbb{S}^2 \otimes \Anti^i \langle \mathbbm{1}^*+\I\alpha\rangle \otimes  
	\Anti^j\langle \mathbbm{1}^*-\I\alpha\rangle),
	\end{align*}	  
where $\alpha\in \Secinfty(T\mathbb{S}^1)$ such that $\alpha(e)=1$.
Note that here we have the canonical identification 
$D\mathbb{R}_M\supseteq\mathrm{Ann}(T\mathbb{S}^2)=T^*\mathbb{S}^1\oplus \mathbb{R}_M$, which 
allows us to identify 
	\begin{align*}
	E_0^{(i,j),q}=\Omega_{\mathbb{R}_M}^{(i,j),q}.
	\end{align*}
In case of a product, the differential $\D_{\mathbb{R}_M}$ splits canonically with respect to the bi-grading into
	\begin{align*}
	\D_L=\D^0+\partial^1+\cc{\partial}^1,
	\end{align*}
where $\D^0\colon\Omega_{\mathbb{R}_M}^{(i,j),q}\to \Omega_{\mathbb{R}_M}^{(i,j),q+1}$, $\partial^1\colon
\Omega_{\mathbb{R}_M}^{(i,j),q}\to \Omega_{\mathbb{R}_M}^{(i+1,j),q}$ and $\cc{\partial}^1\colon \Omega_{\mathbb{R}_M}
^{(i,j),q}\to \Omega_{\mathbb{R}_M}^{(i,j+1),q}$. Additionally, all three maps are differentials themselves and anticommute
pairwise.
Now we want to consider the inverse of $J$, which is the pullback of $\omega$ with respect to the canonical projection $
\mathbb{S}^2\times\mathbb{S}^1\to\mathbb{S}^2$. With a tiny abuse of notation we will see 
$\omega$ is an element of $\Secinfty(\Anti^2 T^*\mathbb{S}^2)\subseteq E_0^{(0,0),2}$.
A long and not very enlightening computation shows that 
	\begin{align*}
	\partial^1\cc{\partial}^1\omega=\frac{1}{4}( (\mathbbm{1}^*-\I\alpha)\wedge( \mathbbm{1}^*+\I\alpha)\wedge\omega)
	\end{align*}
Hence, we have for the cohomology class 
	\begin{align*}
	\partial^1\cc{\partial} ^1[[\omega]_0]_1=[[ (\mathbbm{1}^*-\I\alpha)\wedge( \mathbbm{1}^*+\I\alpha)\wedge\omega]_0]_1.
	\end{align*}
But this cannot vanish, since a $\D^0$-primitive $\psi$ has to be of the form 
	\begin{align*}
	\psi =(\mathbbm{1}^*-\I\alpha)\wedge( \mathbbm{1}^*+\I\alpha)\wedge\beta
	\end{align*}
for $\beta\in \Secinfty(T^*\mathbb{S}^2)$, which implies that $\D\beta=\omega$, 
which is an absurd because the symplectic form on the sphere is not exact.


\begin{thebibliography}{10}

\bibitem{ALDI201578}
M.~Aldi and D.~Grandini.
\newblock Generalized contact geometry and t-duality.
\newblock {\em Journal of Geometry and Physics}, 92:78 -- 93, 2015.

\bibitem{SymplFol}
M.~Bailey.
\newblock Symplectic foliations and generalized complex structures.
\newblock {\em Canad. J. Math.}, 66(1):31--56, 2014.

\bibitem{2017SIGMA..13..059B}
A.~J. Bruce, K.~Grabowska, and J.~Grabowski.
\newblock {Remarks on Contact and Jacobi Geometry}.
\newblock {\em SIGMA}, 13:059, July 2017.

\bibitem{GenComNil}
G.~R. Cavalcanti and M.~Gualtieri.
\newblock Generalized complex structures on nilmanifolds.
\newblock {\em J. Sympl. Geom.}, 2(3):393 – 410, 2004.

\bibitem{CHEN2010799}
Z.~Chen and Z.-J. Liu.
\newblock Omni-lie algebroids.
\newblock {\em Journal of Geometry and Physics}, 60(5):799 -- 808, 2010.

\bibitem{DEGRAAF}
W.~A. de~Graaf.
\newblock Classification of 6-dimensional nilpotent lie algebras over fields of
  characteristic not 2.
\newblock {\em Journal of Algebra}, 309(2):640 -- 653, 2007.

\bibitem{GrayStab}
J.~W. Gray.
\newblock Some global properties of contact structures.
\newblock {\em Ann. Math.}, 69(2):421--450, 1959.

\bibitem{zbMATH05960697}
M.~{Gualtieri}.
\newblock {Generalized complex geometry.}
\newblock {\em {Ann. Math.}}, 174(1):75--123, 2011.

\bibitem{H2003}
N.~Hitchin.
\newblock {Generalized Calabi-Yau manifolds}.
\newblock {\em Quart.~J.~Math.~}, 54:281--308, 2003.

\bibitem{Alvarez}
M.~C. Rodríguez-Vallarte M.~A.~Alvarez and G.~Salgado.
\newblock Contact nilpotent lie algebras.
\newblock {\em Proc. Amer. Math. Soc.}, 145:1467--1474, 2017.

\bibitem{JLMS:JLMS0333}
Y.~S. Poon and A.~Wade.
\newblock Generalized contact structures.
\newblock {\em Journal of the London Mathematical Society}, 83(2), 2011.

\bibitem{Remm}
E.~Remm.
\newblock Vinberg algebras associated to some nilpotent lie algebras.
\newblock {\em Non-associative Algebra and its Applications}, pages 347 -- 364.

\bibitem{SALAMON}
S.M. Salamon.
\newblock Complex structures on nilpotent lie algebras.
\newblock {\em Journal of Pure and Applied Algebra}, 157(2):311 -- 333, 2001.

\bibitem{2017arXiv171108310S}
J.~{Schnitzer} and L.~{Vitagliano}.
\newblock {The Local Structure of Generalized Contact Bundles}.
\newblock {\em ArXiv e-prints}, 2017.

\bibitem{2017arXiv170508962T}
A.~G. {Tortorella}.
\newblock {Deformations of coisotropic submanifolds in Jacobi manifolds}.
\newblock {\em ArXiv e-prints}, May 2017.

\bibitem{DirJacBun}
L.~{Vitagliano}.
\newblock {Dirac-Jacobi Bundles}.
\newblock {\em ArXiv e-prints}, 2015.

\bibitem{GenConBun}
L.~{Vitagliano} and A.~{Wade}.
\newblock {Generalized Contact Bundles}.
\newblock {\em Comptes Rendus Mathematique}, 354(3):313 -- 317, 2016.

\bibitem{IGLESIASPONTE2005249}
A.~{Wade} and D.~Iglesias-Ponte.
\newblock Contact manifolds and generalized complex structures.
\newblock {\em Journal of Geometry and Physics}, 53(3):249 -- 258, 2005.

\bibitem{weibel1995introduction}
C.A. Weibel.
\newblock {\em An Introduction to Homological Algebra}.
\newblock Cambridge Studies in Advanced Mathematics. Cambridge University
  Press, 1995.


\end{thebibliography}
\end{document}